\documentclass[11pt, reqno]{amsart}

\usepackage{amscd,amsmath}
\usepackage{mathrsfs}
\usepackage{amsfonts}
\usepackage{amssymb}
\usepackage{enumerate}
\usepackage{setspace}
\usepackage{color}
\usepackage{esint}
\usepackage{rotating,dsfont,stackengine}
\usepackage{mathabx}
\usepackage{tabu}
\usepackage{tikz} 
\usepackage{setspace}
\usepackage[margin=1in]{geometry}

\usepackage{amssymb,extpfeil}
\usepackage{ifthen,latexsym,float,colortbl}

\usepackage{scalerel}

\usepackage{caption}
\usepackage{sidecap}

\usepackage[english]{babel}
\usepackage{graphicx}

\definecolor{luh-dark-blue}{rgb}{0.0, 0.313, 0.608}

\usepackage[colorlinks=true, citecolor=blue]{hyperref}

\newtheorem{theorem}{Theorem}[section]
\newtheorem{lemma}{Lemma}[section]

\newtheorem{remark}{Remark}[section]
\newtheorem{proposition}{Proposition}[section]

\numberwithin{equation}{section}

\DeclareMathOperator{\dv}{div}

\newcommand{\R}{\mathbb R}

\newcommand{\N}{\mathbb N}

\newcommand{\bq}{\begin{equation}}
\newcommand{\eq}{\end{equation}}

\newcommand{\lt}{\left}
\newcommand{\rt}{\right}
\newcommand{\la}{\langle}
\newcommand{\ra}{\rangle}
\newcommand{\pa}{\partial}

\begin{document}

\title[Nonlocal incompressible Navier--Stokes--Korteweg equations]{Local well-posedness and asymptotic analysis of a nonlocal incompressible Navier--Stokes--Korteweg system}

\author[Kim]{Jeongho Kim}
\address[Jeongho Kim]{\newline Department of Applied Mathematics,  \newline 
Kyung Hee University, 1732 Deogyeong-daero, Giheung-gu, Yongin-si,
Gyeonggi-do 17104, Republic of Korea}
\email{jeonghokim@khu.ac.kr}

\author[Shin]{Jaeyong Shin}
\address[Jaeyong Shin]{\newline Department of Mathematics \newline
Yonsei University, Republic of Korea}
\email{sinjaey@yonsei.ac.kr}

\date{\today}
\keywords{incompressible Navier--Stokes--Korteweg system, nonlocal relaxation, nonlocal-to-local limit, vanishing capillarity limit}
\subjclass[2020]{35Q35, 76D45}
\thanks{The work of J. Kim was supported by Samsung Science and Technology Foundation under Project Number SSTF-BA2401-01. The work of J. Shin was supported by the National Research Foundation of Korea(NRF) grant funded by the Korea government (MSIT) (RS-2024-00406821). }

\begin{abstract}
	We consider a relaxed formulation of the inhomogeneous incompressible Navier--Stokes--Korteweg system, where the classical third-order capillarity term is replaced by a nonlocal approximation. We first establish the local-in-time well-posedness of the relaxed system, under standard regularity and positivity assumptions on the initial data. The existence time is uniform with respect to both the capillarity coefficient and the relaxation parameter. We then study two asymptotic limits of the system: the nonlocal-to-local limit as the relaxation parameter tends to infinity, and the vanishing capillarity limit. In each case, we prove convergence of the solution to that of the corresponding target system. Our analysis provides a rigorous justification for the use of nonlocal relaxation models in approximating capillarity-driven incompressible fluid flows.
\end{abstract}

\maketitle

\vspace{-5ex}

\section{Introduction}

The Navier--Stokes--Korteweg (NSK) system describes the dynamics of two-phase fluids, such as liquid-vapor mixtures, with a diffuse-interface structure that incorporates capillarity effects through the Korteweg stress tensor. The NSK model traces back to the early works of van der Waals \cite{W94} and Korteweg \cite{K01}, and was later rigorously formalized by Dunn and Serrin \cite{DS85}, in which the following compressible (isentropic) NSK model was derived:   
\begin{align}\label{compressble_NSK}
\begin{aligned}
	&\pa_t \rho + \dv(\rho u) = 0,\quad t>0,\quad x\in\R^d,\\
	&\pa_t(\rho u) +\dv(\rho u \otimes u) +\nabla p=\mu\Delta u +(\lambda+\mu)\nabla\dv u+\kappa\rho\nabla\Delta \rho.
\end{aligned}
\end{align}
Here $\rho$ and $u$ denote the density and velocity of the fluid, $p$ is the pressure, $\mu,\lambda$ are viscosity coefficients and $\kappa$ is the capillarity coefficient. Since its introduction, the NSK system has been studied from various perspectives, including the local and global well-posedness \cite{AS22,DD01,H11,HL94}, asymptotic behavior \cite{TZ14}, stability of the traveling wave solutions \cite{HKKL25}, vanishing viscosity-capillarity limit \cite{CH13}, among others.

Due to its high-order nature, the capillarity term $\kappa\rho\nabla\Delta \rho$ poses difficulties for numerical discretization. In particular, it requires extended grids and may introduce numerical instabilities. To alleviate these issues, a nonlocal (or relaxed) NSK model was introduced in \cite{R05} (although the idea can be found in the work of van der Waals \cite{W94}) in which the third-order derivative term is relaxed to the nonlocal term:
\[\nabla\Delta \rho\approx \alpha^2\nabla(K_\alpha\star\rho -\rho),\]
where $K_\alpha$ is a Green function for the screened Poisson equation:
\[c = K_\alpha \star \rho,\quad\mbox{where $c$ satisfies}\quad (\alpha^2-\Delta ) c = \alpha^2\rho.\] 
The idea behind this approximation is as follows. By taking the Fourier transform to the screened Poisson equation, we have
\[(\alpha^2+|\xi|^2)\hat{c} = \alpha^2\hat{\rho},\quad\mbox{which implies}\quad \hat{K}_\alpha(\xi)=\frac{\alpha^2}{\alpha^2+|\xi|^2}.\]
Therefore, in terms of the frequency variable, $\alpha^2(K_\alpha\star\rho-\rho)$ approximates $\Delta\rho$ for large $\alpha$:
\[\alpha^2\widehat{(K_\alpha\star\rho-\rho)}=-\frac{\alpha^2|\xi|^2}{\alpha^2+|\xi|^2}\hat{\rho}\approx -|\xi|^2\hat{\rho}=\widehat{(\Delta\rho)}.\]
Therefore, it is expected that the solutions to the nonlocal NSK equations will converge to that of the local NSK equations \eqref{compressble_NSK}. This observation is rigorously justified in \cite{CH11}. We also refer to another type of parabolic relaxation for the NSK model \cite{HKMR20}.

On the other hand, there has been another stream of research that consider the incompressible counterpart of the NSK system \eqref{compressble_NSK}. Precisely, the following (inhomogeneous) incompressible Navier--Stokes--Korteweg (INSK) system has been investigated:
\begin{align} 
\begin{aligned}\label{NSK}
& \pa_t\rho+\dv(\rho u)=0, \quad t>0,\quad x\in\R^d,\\
& \rho [\pa_tu+(u\cdot\nabla u)]+\nabla p =\mu\Delta u+\kappa\rho\nabla\Delta\rho, \\
&\dv u=0.
\end{aligned}
\end{align}
As the Korteweg tensor takes into account the capillarity force that comes from the inhomogeneity of the fluid density, it is necessary to consider an inhomogeneous incompressible fluid, since the fluid density must vary for the capillarity effects to be relevant. During the last decade, many topics including the well-posedness \cite{BC17,JB23,SBGLR06,W17,Wpre,ZY10}, vanishing viscosity-capillarity limit \cite{LZZ24,WZ24,YYZ15} or blow-up criteria \cite{L20,L21} have been studied for the incompressible NSK system \eqref{NSK}.

In the present paper, we consider the nonlocal relaxation model of \eqref{NSK} given by
\begin{align}
\begin{aligned}\label{NSK-relax-0}
	&\pa_t \rho + \dv(\rho u) = 0,\quad t>0,\quad x\in\R^d,\\
	&\rho [\partial_tu + (u\cdot \nabla) u ] +\nabla p = \Delta u +\kappa\alpha^2\rho\nabla(K_\alpha\star\rho-\rho),\\
	&\dv u = 0,
\end{aligned}
\end{align}
where we normalized the viscosity coefficient $\mu=1$ for convenience. Recall that the convolution term $c:=K_\alpha\star\rho$ satisfies the following screened Poisson equation:
\[\alpha^2c-\Delta c = \alpha^2\rho.\]
Inspired by this screened Poisson equation, we define the pseudo-differential operator
\[k_\alpha(\xi) = \frac{\alpha}{\sqrt{\alpha^2+|\xi|^2}}.\]
Using the operator $k_\alpha$, we have $c = k_\alpha^2 \rho$, and the convolution term in the momentum equation of \eqref{NSK-relax-0} can be written as
\[\kappa\alpha^2\rho\nabla(K_\alpha\star\rho-\rho)=\kappa\rho\nabla(\alpha^2 c- \alpha^2\rho)= \kappa\rho\nabla\Delta c = \kappa\rho\nabla\Delta k^2_\alpha\rho.\]
Finally, observing that $\kappa\rho\nabla\Delta k^2_\alpha\rho = \nabla(\kappa\rho\Delta k^2_\alpha\rho)-\kappa\nabla\rho \Delta k^2_\alpha\rho$, the relaxed INSK system, which is the main equation of the present paper, can be written as

\begin{align}
	\begin{aligned}\label{NSK-relax}
		&\pa_t \rho + u\cdot \nabla \rho = 0,\\
		&\rho[\pa_t u + (u\cdot\nabla)u] - \Delta u +\nabla \pi = -\kappa\nabla\rho k^2_\alpha\Delta\rho,\\
		&\dv u=0,
	\end{aligned}
\end{align}
subject to initial data:
\[
(\rho, u)(0,x) = (\rho_0, u_0)(x), \quad x \in \R^d,
\]
with the modified pressure $\pi = p-\kappa\rho\Delta k^2_\alpha\rho$. Our first result is the local well-posedness of the relaxed INSK system \eqref{NSK-relax} uniformly with respect to parameters $\kappa$ or $\alpha$.

\begin{theorem}\label{thm:lwp}
Let $d=2$ or $d=3$. Assume the initial data $(\rho_0, u_0)$ satisfies $\rho_0-\bar{\rho}\in H^4(\R^d)$ for some positive constant $\bar{\rho}$, $\rho_m:=\inf{\rho_0(x)}>0$ and $\rho_M:=\sup{\rho_0(x)}<\infty$, $u_0\in H^3(\R^d)$ and $\dv u_0=0$. Moreover, assume that the capillarity coefficient $\kappa$ is bounded above, that is $0<\kappa\le \kappa_0$. Then, there exists a positive time $T_0$ which depends on $(\rho_0, u_0)$ and $\kappa_0$, but independent of $\kappa$ or $\alpha$, such that \eqref{NSK-relax} has a (unique) solution $(\rho-\bar{\rho}, u)\in C([0,T_0];H^4(\R^d))\times C([0,T_0];H^3(\R^d))$.
\end{theorem}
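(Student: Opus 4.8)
The plan is to establish local well-posedness via a standard iteration scheme combined with uniform a priori energy estimates. The key structural observations are that the density equation is a pure transport equation (so $\rho$ stays between $\rho_m$ and $\rho_M$ and its $H^4$ norm is controlled by the flow of $u$), and that the nonlocal operator $k_\alpha^2 = \alpha^2(\alpha^2-\Delta)^{-1}$ is a Fourier multiplier with symbol in $[0,1]$, hence bounded on every $H^s$ \emph{uniformly in $\alpha$} and, crucially, order $\le 0$ so that $k_\alpha^2 \Delta \rho$ is controlled by $\|\rho\|_{H^2}$ uniformly in $\alpha$ (this is the mechanism that makes the capillary forcing $-\kappa \nabla\rho\, k_\alpha^2\Delta\rho$ a genuine lower-order perturbation, unlike the local term $\kappa\rho\nabla\Delta\rho$).

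\textbf{Step 1: Linearized problem and iteration.} I would define a sequence $(\rho^{n+1}, u^{n+1})$ by solving the transport equation $\pa_t\rho^{n+1} + u^n\cdot\nabla\rho^{n+1}=0$ with $\rho^{n+1}(0)=\rho_0$, followed by the linear Stokes-type system $\rho^{n+1}[\pa_t u^{n+1} + (u^n\cdot\nabla)u^{n+1}] - \Delta u^{n+1} + \nabla\pi^{n+1} = -\kappa\nabla\rho^{n+1}\, k_\alpha^2\Delta\rho^{n+1}$, $\dv u^{n+1}=0$, with $u^{n+1}(0)=u_0$. The transport step preserves the bounds $\rho_m \le \rho^{n+1} \le \rho_M$ (maximum principle along characteristics) and propagates $H^4$ regularity; the velocity step is a linear parabolic system with variable (but positive, bounded) coefficient $\rho^{n+1}$ and a given forcing term lying in $C([0,T];H^2)$ (since $\nabla\rho^{n+1}\in H^3$, $k_\alpha^2\Delta\rho^{n+1}\in H^2$ uniformly in $\alpha$, and $H^2$ is an algebra in $d\le 3$), so it has a unique solution with $u^{n+1}\in C([0,T];H^3)\cap L^2(0,T;H^4)$ by standard linear theory.

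\textbf{Step 2: Uniform bounds.} I would define the energy functional $\mathcal{E}^n(t) = \|\rho^n(t)-\bar\rho\|_{H^4}^2 + \|u^n(t)\|_{H^3}^2$ and show, using energy estimates, that there exist $T_0>0$ and $R>0$, depending only on the initial data and $\kappa_0$ but independent of $\kappa\le\kappa_0$ and of $\alpha$, such that $\sup_{n}\sup_{[0,T_0]}\mathcal{E}^n(t) \le R$. For $\rho$: differentiating the transport equation up to four derivatives and using the commutator/Moser estimates gives $\frac{d}{dt}\|\rho^n-\bar\rho\|_{H^4}^2 \lesssim \|\nabla u^{n-1}\|_{H^3}\|\rho^n-\bar\rho\|_{H^4}^2 \lesssim \mathcal{E}^{n-1}\mathcal{E}^n$ — note the loss of one derivative on $u$ relative to $\rho$ is exactly matched by the $H^4$-vs-$H^3$ pairing, and $\pa_t u$ is \emph{not} needed. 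For $u$: applying $\pa^\gamma$ with $|\gamma|\le 3$ to the momentum equation, pairing with $\rho^n\pa^\gamma u^n$ — the natural energy weighted by density — and integrating by parts, the leading term gives $\frac12\frac{d}{dt}\int\rho^n|\pa^\gamma u^n|^2 + \|\nabla\pa^\gamma u^n\|_{L^2}^2$; the pressure drops out by incompressibility; the convection and commutator terms are handled by Moser/Kato–Ponce; and the capillary forcing contributes $\le \kappa_0 \|\nabla\rho^n\, k_\alpha^2\Delta\rho^n\|_{H^3}\|u^n\|_{H^3} \lesssim \kappa_0 \|\rho^n-\bar\rho\|_{H^4}^2 \|u^n\|_{H^3}$, again uniformly in $\alpha$. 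Using $\rho^n \ge \rho_m>0$ to convert the weighted energy back to $\|u^n\|_{H^3}^2$, a Grönwall/continuity argument on the resulting differential inequality $\frac{d}{dt}\mathcal{E}^n \lesssim P(\mathcal{E}^n, \mathcal{E}^{n-1})$ closes the uniform bound on a short time interval.

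\textbf{Step 3: Contraction and passage to the limit.} With the uniform bound in hand, I would estimate the differences $\delta\rho^{n+1} = \rho^{n+1}-\rho^n$, $\delta u^{n+1}=u^{n+1}-u^n$ in the \emph{lower} norm $\|\delta\rho^{n+1}\|_{H^3}^2 + \|\delta u^{n+1}\|_{H^2}^2$ (losing one derivative, as is standard to avoid a derivative loss in the difference of transport equations), showing it is contracted by a factor $\le CT_0$ for $T_0$ small, uniformly in the parameters; here the difference of capillary terms $-\kappa(\nabla\delta\rho^{n+1}\,k_\alpha^2\Delta\rho^{n+1} + \nabla\rho^n\, k_\alpha^2\Delta\delta\rho^{n+1})$ is again controlled using boundedness of $k_\alpha^2$ on $H^1$ uniformly in $\alpha$. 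This yields a Cauchy sequence converging strongly in $C([0,T_0]; H^3\times H^2)$; the uniform high-norm bound then upgrades the limit by interpolation and weak-$*$ compactness to $(\rho-\bar\rho, u)\in C([0,T_0];H^4)\times C([0,T_0];H^3)$ (with continuity in the strong topology recovered by the usual Bona–Smith/time-continuity argument), and passing to the limit in the iteration identifies it as a solution of \eqref{NSK-relax}. Uniqueness follows from the same difference estimate applied to two solutions.

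\textbf{Main obstacle.} The delicate point is ensuring \emph{all} constants in the energy and contraction estimates are independent of $\alpha$ and $\kappa$ simultaneously. The $\alpha$-uniformity hinges on never exploiting the smoothing of $(\alpha^2-\Delta)^{-1}$ — only the uniform bound $\|k_\alpha^2 f\|_{H^s}\le \|f\|_{H^s}$ and the key fact that $k_\alpha^2\Delta$ maps $H^{s}\to H^{s-2}$ with norm $\le 1$ — so the capillary term must be treated purely as an order-zero (in $\rho$, after the two explicit derivatives) perturbation; one must verify this does not secretly require $\alpha$-dependent bounds anywhere, in particular in the highest-order $H^4$ estimate for $\rho$ where $k_\alpha^2\Delta\rho$ sits inside a product with $\nabla\rho$ that is then hit by three more derivatives. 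The $\kappa$-uniformity is comparatively easy since $\kappa\le\kappa_0$ enters linearly, but one must be careful that the short time $T_0$ depends only on $\kappa_0$ and not on the specific $\kappa$; this is automatic once the differential inequality is written with $\kappa_0$ in place of $\kappa$.
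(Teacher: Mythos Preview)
Your overall plan is reasonable and in fact \emph{simpler} than the paper's, but the key estimate you write for the capillary term is wrong as stated, and this is precisely the ``main obstacle'' you flag at the end without resolving. You claim
\[
\bigl|\langle D^\gamma(\nabla\rho\, k_\alpha^2\Delta\rho), D^\gamma u\rangle\bigr| \le \kappa_0\,\|\nabla\rho\, k_\alpha^2\Delta\rho\|_{H^3}\|u\|_{H^3}\lesssim \kappa_0\,\|\rho-\bar\rho\|_{H^4}^2\|u\|_{H^3}
\]
uniformly in $\alpha$. This fails: when all three derivatives in $D^\gamma$ hit $k_\alpha^2\Delta\rho$, you need $\|k_\alpha^2\Delta\rho\|_{H^3}$, and a direct Fourier computation shows $\|k_\alpha^2\Delta\rho\|_{H^3}\lesssim \alpha\,\|\rho-\bar\rho\|_{H^4}$, \emph{not} $\|\rho-\bar\rho\|_{H^4}$ (take $|\xi|\sim\alpha$ in the multiplier $\alpha^4|\xi|^4(\alpha^2+|\xi|^2)^{-2}$). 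Your own remark that $k_\alpha^2\Delta:H^s\to H^{s-2}$ with norm $\le 1$ is correct, but it only gives $k_\alpha^2\Delta\rho\in H^2$, not $H^3$. The fix is easy and uses an ingredient you already have: integrate by parts once to trade a derivative onto $u$, bound by $\|\nabla\rho\, k_\alpha^2\Delta\rho\|_{H^2}\|\nabla u\|_{H^3}$ (now genuinely $\lesssim\|\rho-\bar\rho\|_{H^4}^2\|\nabla u\|_{H^3}$ uniformly in $\alpha$, since $H^2$ is an algebra), and absorb $\|\nabla u\|_{H^3}^2$ into the parabolic dissipation. With this correction your scheme closes.

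It is worth noting that the paper takes a rather different route. It introduces an auxiliary variable $\eta=\nabla\rho$ with its own transport equation, and the iteration (Section~3.1--3.2) is run with an energy that \emph{does} depend on $\alpha$ (a factor $(1+\alpha)$ appears in Lemma~3.3); only afterwards (Section~3.3) is an $\alpha$-independent bound obtained, and only on the \emph{actual solution}, not the iterates. There the top-order capillary term is not treated as a source at all: using the transport equation to write $\nabla\rho\cdot D^\gamma u=-\partial_t D^\gamma\rho-\text{commutator}$, the term $-\kappa\langle\nabla\rho\, D^\gamma k_\alpha^2\Delta\rho, D^\gamma u\rangle$ becomes $-\tfrac{\kappa}{2}\tfrac{d}{dt}\|k_\alpha D^\gamma\nabla\rho\|_{L^2}^2$ plus commutators, and the remaining dangerous piece is killed by the algebraic identity $I-k_\alpha^2=-\alpha^{-2}k_\alpha^2\Delta$ together with $\operatorname{div}u=0$. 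Your approach, once repaired, bypasses all of this structure by exploiting the viscosity; the paper's approach would survive in the inviscid case.
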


\begin{remark}
The function space $H^4(\R^d)\times H^3(\R^d)$ for $(\rho(t)-\bar{\rho},u(t))$ in Theorem \ref{thm:lwp} can be replaced by $H^{s+1}(\R^d)\times H^s(\R^d)$ for any integer $s>\frac{d}{2}+1$. However, for simplicity, we set $s=3$ throughout this paper, including in Theorem \ref{thm:lwp} and the subsequent results.
\end{remark}

After obtaining the local well-posedness of the nonlocal INSK system, we consider two asymptotic limits. We first study the so-called ``nonlocal-to-local" limit, where we take a limit $\alpha\to+\infty$. As we already mentioned, it is expected that the solutions to the nonlocal INSK system \eqref{NSK-relax} will converge to the solution of the local (or standard) INSK system \eqref{NSK}. 

\begin{theorem}\label{thm:nonlocal_to_local}
	Let $(\rho^\alpha,u^\alpha)$ be the local solution to \eqref{NSK-relax} constructed in Theorem \ref{thm:lwp} and let $(\rho-\bar{\rho},u)\in C([0,T_0];H^4(\R^d))\times C([0,T_0];H^3(\R^d))$ be the local solution to \eqref{NSK} subject to the same initial data $(\rho_0,u_0)$. Then, for each $l\in[0,2]$ there exists positive constant $C_l$ such that
	\[\sup_{0\le t\le T_0}\left(\|\rho^\alpha(t)-\rho(t)\|_{H^{2+l}(\R^d)}^2+\|u^\alpha(t)-u(t)\|_{H^{1+l}(\R^d)}^2\right)\le \frac{C_l(T_0)}{\alpha^{4-2l}}.\]
\end{theorem}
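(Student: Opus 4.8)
The plan is to estimate the difference $(\sigma, v) := (\rho^\alpha - \rho, u^\alpha - u)$ by a standard energy method, exploiting the uniform-in-$\alpha$ bounds that Theorem \ref{thm:lwp} provides for $(\rho^\alpha, u^\alpha)$ in $C([0,T_0]; H^4 \times H^3)$ (and similarly for $(\rho, u)$ as a solution of \eqref{NSK}). Subtracting the two systems, $\sigma$ solves a transport equation $\pa_t \sigma + u^\alpha \cdot \nabla \sigma = - v \cdot \nabla \rho$, while $v$ satisfies a perturbed momentum equation of the form
\begin{align*}
\rho^\alpha[\pa_t v + (u^\alpha \cdot \nabla) v] - \Delta v + \nabla(\pi^\alpha - \pi) = -\sigma[\pa_t u + (u\cdot\nabla)u] - \rho^\alpha (v\cdot\nabla) u - \kappa \nabla\sigma\, k_\alpha^2 \Delta \rho^\alpha - \kappa \nabla \rho\, k_\alpha^2 \Delta \sigma - \kappa \nabla \rho\, (k_\alpha^2 - 1)\Delta \rho.
\end{align*}
The last term is the \emph{consistency error}: it measures how far the nonlocal operator $\kappa \nabla \rho\, k_\alpha^2 \Delta \rho$ is from the local capillarity term $\kappa \nabla\rho \Delta \rho$ appearing in \eqref{NSK}. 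Since $\widehat{k_\alpha^2 - 1}(\xi) = -|\xi|^2/(\alpha^2 + |\xi|^2)$, the operator $k_\alpha^2 - 1$ gains two derivatives at the cost of a factor $\alpha^{-2}$; more precisely $\|(k_\alpha^2 - 1) f\|_{H^m} \lesssim \alpha^{-2}\|f\|_{H^{m+2}}$ and, interpolating, $\|(k_\alpha^2 - 1)f\|_{H^m} \lesssim \alpha^{-(2-l)}\|f\|_{H^{m+2-l}}$ for $l \in [0,2]$. Using the $H^4$ bound on $\rho$, this makes the consistency error $O(\alpha^{-(2-l)})$ in a suitable norm, which after squaring and integrating yields the claimed rate $\alpha^{-(4-2l)}$.

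Concretely, I would run the estimate in two stages corresponding to the two endpoints $l=0$ and $l=2$, then interpolate (or, more directly, close the argument for general $l$ by choosing the norm index accordingly). For $l=0$: test the $v$-equation against $v$ in $L^2$ and differentiate the $\sigma$-transport equation up to second order, testing against the corresponding derivative of $\sigma$; the divergence-free condition kills the pressure, the parabolic term $-\Delta v$ gives a good $\|\nabla v\|_{L^2}^2$ term, and all nonlinear commutator terms are handled by the uniform $H^4 \times H^3$ bounds together with Sobolev embeddings (valid since $d\le 3$). The capillarity terms $\kappa \nabla\sigma\, k_\alpha^2 \Delta\rho^\alpha$ and $\kappa \nabla\rho\, k_\alpha^2 \Delta \sigma$ are controlled using that $k_\alpha^2$ is bounded on every $H^m$ uniformly in $\alpha$ (its symbol is bounded by $1$); the term with $\Delta\sigma$ requires pairing against $v$ and either integrating by parts once or using the $\|\nabla v\|_{L^2}$ coming from viscosity, absorbing a small multiple into the good term and keeping $\|\sigma\|_{H^2}^2$ on the right. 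This produces a Grönwall inequality
\begin{align*}
\frac{d}{dt}\left(\|v\|_{L^2}^2 + \|\sigma\|_{H^2}^2\right) + \|\nabla v\|_{L^2}^2 \le C\left(\|v\|_{L^2}^2 + \|\sigma\|_{H^2}^2\right) + \frac{C}{\alpha^4},
\end{align*}
and since the initial data coincide, Grönwall gives the $l=0$ bound. For $l=2$ one repeats the scheme at one higher level of regularity: estimate $v$ in $H^3$ and $\sigma$ in $H^4$, which is exactly the regularity available from Theorem \ref{thm:lwp}, and the consistency error is then measured in $L^2$-type norms where the full factor $\alpha^{-2}$ (squared: $\alpha^{-4} = \alpha^{-(4-2\cdot 2)}\cdot\alpha^{-4}$, i.e. $O(1)$) — wait, at $l=2$ the claimed rate is $\alpha^0$, so no smallness is needed and the bound is just the uniform boundedness of the difference, which follows a fortiori from Theorem \ref{thm:lwp}; the genuine content is the full range $l\in[0,2)$, obtained by interpolating the $l=0$ energy estimate with the uniform $H^4\times H^3$ bounds, or equivalently by running the energy estimate in $H^{2+l}\times H^{1+l}$ directly and invoking the interpolated consistency estimate $\|(k_\alpha^2-1)\Delta\rho\|_{H^{l}} \lesssim \alpha^{-(2-l)}\|\rho\|_{H^{4}}$.

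The main obstacle is the capillarity term $-\kappa\nabla\rho\, k_\alpha^2 \Delta\sigma$ in the $v$-equation: it is \emph{second order} in $\sigma$, so when one performs the top-order energy estimate on $v$ (in $H^{1+l}$) one must pair a term of regularity $\sigma \in H^{l-1}$-worth-of-derivatives-below-top against $v$, and one needs to make sure this does not require controlling $\sigma$ in a norm stronger than $H^{2+l}$ — which is why the transport estimate for $\sigma$ is run at exactly two orders above the velocity estimate, matching the structure already exploited in the proof of Theorem \ref{thm:lwp}. A secondary technical point is that the pressure difference $\nabla(\pi^\alpha - \pi)$ involves $\kappa(\rho^\alpha \Delta k_\alpha^2 \rho^\alpha - \rho\Delta\rho)$, which must be split into a part handled by the Helmholtz/divergence-free structure and a consistency part $\kappa\rho(k_\alpha^2 - 1)\Delta\rho$ of size $O(\alpha^{-(2-l)})$; but since we only ever test the $v$-equation against divergence-free fields, the full $\nabla(\pi^\alpha-\pi)$ contribution vanishes and this issue does not actually arise for the $L^2$-based estimates — only when one differentiates does a commutator $[\partial^\gamma, \cdot]\nabla\pi$-type term appear, and that is again absorbed by the uniform bounds. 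Once the differential inequality is closed, Grönwall with vanishing initial difference delivers the stated estimate with $C_l(T_0)$ depending on $T_0$, $\kappa_0$, and the uniform norms from Theorem \ref{thm:lwp}.
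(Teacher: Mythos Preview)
Your overall strategy is right and matches the paper's: derive equations for the differences, estimate the consistency error $\nabla\rho\,(k_\alpha^2-I)\Delta\rho$ using the operator bound $\|(k_\alpha^2-I)f\|_{H^m}\lesssim\alpha^{-2}\|f\|_{H^{m+2}}$, close a Gr\"onwall inequality at the base level, then interpolate with the uniform $H^4\times H^3$ bounds for $l\in(0,2]$. But the specific energy level you display is wrong, and this is not cosmetic.

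The Gr\"onwall inequality you write,
\[
\frac{d}{dt}\bigl(\|v\|_{L^2}^2+\|\sigma\|_{H^2}^2\bigr)+\|\nabla v\|_{L^2}^2\le C\bigl(\|v\|_{L^2}^2+\|\sigma\|_{H^2}^2\bigr)+C\alpha^{-4},
\]
does not close. In the $H^2$ transport estimate for $\sigma$, the forcing $-v\cdot\nabla\rho$ produces, at top order $|\gamma|=2$, the term $\langle D^2 v\cdot\nabla\rho,\,D^2\sigma\rangle$, which requires $\|\nabla^2 v\|_{L^2}$. The $L^2$-level viscosity only gives you $\|\nabla v\|_{L^2}^2$; integrating by parts instead throws the derivative onto $D^2\sigma$ and asks for $\|\sigma\|_{H^3}$. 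Either way you are one derivative short. Moreover, even if the $(L^2,H^2)$ estimate did close, it would yield $\|v\|_{L^2}^2\lesssim\alpha^{-4}$, and interpolating this against the uniform $H^3$ bound gives only $\|v\|_{H^1}^2\lesssim\alpha^{-8/3}$, not the claimed $\alpha^{-4}$.

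The paper therefore works at the $(H^1,H^2)$ level for $(v,\sigma)$: apply $D^\gamma$ with $|\gamma|\le 1$ to the $v$-equation and test against $D^\gamma v$, so that viscosity supplies $\|\nabla v\|_{H^1}^2$ and absorbs the $\|\nabla^2 v\|_{L^2}^2$ term from the transport estimate. Going to $H^1$ on $v$ brings a new issue you do not mention: the commutator $\langle D^\gamma(\rho^\alpha\pa_t v)-\rho^\alpha D^\gamma\pa_t v,\,D^\gamma v\rangle$ forces control of $\|\pa_t v\|_{L^2}$, which the paper obtains via a separate estimate (test the $v$-equation against $\pa_t v$; this yields $\int_0^t\|\pa_t v\|_{L^2}^2\,ds$ with the same $\alpha^{-4}$ rate). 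Your sketch has no analogue of this step.

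One point where your outline genuinely differs from the paper and could be made to work: for the dangerous capillarity term $\langle\nabla\rho\,D^\gamma k_\alpha^2\Delta\sigma,\,D^\gamma v\rangle$ with $|\gamma|=1$, you propose integration by parts plus absorption into viscosity. At the correct $H^1$ level this is valid: moving $D^\gamma$ across gives $\|k_\alpha^2\Delta\sigma\|_{L^2}\|\nabla\rho\|_{L^\infty}\|\nabla^2 v\|_{L^2}\le\varepsilon\|\nabla v\|_{H^1}^2+C\|\sigma\|_{H^2}^2$. The paper instead uses a modulated energy: it rewrites $\nabla\rho\cdot D^\gamma v=-\pa_t D^\gamma\sigma-D^\gamma(u^\alpha\cdot\nabla\sigma)-\ldots$ via the transport equation, converting the term into $-\tfrac12\tfrac{d}{dt}\|k_\alpha\nabla\sigma\|_{H^1}^2$ plus controllable remainders. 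Both approaches close, but neither closes at $(L^2,H^2)$.
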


\begin{remark}
	The local existence of the solution $(\rho,u)$ of \eqref{NSK} with desired regularity is already proved in \cite{YYZ15}. For convenience, we assume that the lifespan of the solution $(\rho,u)$ for \eqref{NSK} is also $T_0$ in Theorem \ref{thm:nonlocal_to_local}. In general, we may choose $T_0$ as a minimum of the lifespans of the solutions $(\rho^\alpha,u^\alpha)$ and $(\rho,u)$. The same assumption is adopted in Theorem \ref{thm:vanishing_capillarity} below.
\end{remark}

The second asymptotic limit that we consider is vanishing capillarity limit. In this case, we fix $\alpha$ and then take $\kappa\to0$. After taking $\kappa\to0$ in \eqref{NSK-relax}, it is expected that the solution to \eqref{NSK-relax} converges to the solution of the (inhomogeneous) incompressible Navier--Stokes equations:
\begin{align}
\begin{aligned}\label{NS}
	&\pa_t\rho+\dv(\rho u) = 0,\quad t>0,\quad x\in\R^d,\\
	&\rho[\pa_t u+(u\cdot\nabla)u]-\Delta u +\nabla\pi = 0,\\
	&\dv u =0.
\end{aligned}
\end{align}
Our second asymptotic limit is to justify this vanishing capillarity limit.
\begin{theorem}\label{thm:vanishing_capillarity}
	Let $(\rho^\kappa,u^\kappa)$ be the local solution to \eqref{NSK-relax} constructed in Theorem \ref{thm:lwp} with the capillarity coefficient $\kappa$ and let $(\rho-\bar{\rho},u)\in C([0,T_0];H^3(\R^3))\times C([0,T_0];H^3(\R^3))$ be the local solution to \eqref{NS} subject to the same initial data $(\rho_0,u_0)$. Then, there exists positive constant $C$ such that
	\[\sup_{0\le t\le T_0}\left(\|\rho^\kappa(t)-\rho(t)\|_{H^3(\R^d)}^2+\|u^\kappa(t)-u(t)\|_{H^3(\R^d)}^2\right)\le C(T_0)\kappa^2.\]
\end{theorem}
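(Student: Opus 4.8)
The plan is to estimate the difference $(\sigma, v) := (\rho^\kappa - \rho, u^\kappa - u)$ directly in the energy space $H^3(\R^d) \times H^3(\R^d)$ via a Gronwall argument, exploiting that both solutions are controlled uniformly on $[0,T_0]$ by Theorem \ref{thm:lwp} and the known local theory for \eqref{NS}. First I would subtract the two systems. From the transport equations, $\sigma$ satisfies $\pa_t \sigma + u^\kappa \cdot \nabla \sigma = -v \cdot \nabla \rho$, so a standard commutator/energy estimate in $H^3$ gives $\frac{d}{dt}\|\sigma\|_{H^3}^2 \ls (\|\nabla u^\kappa\|_{H^3} + \|\rho\|_{H^4})(\|\sigma\|_{H^3}^2 + \|v\|_{H^3}^2)$, where I would need $\|\nabla u^\kappa\|_{H^3}$; note this forces $u^\kappa \in H^4$, which is slightly more than Theorem \ref{thm:lwp} provides, so in practice one either upgrades the regularity or, more cleanly, pairs the $H^3$ transport estimate against $\|\nabla \rho\|_{H^3}\|v\|_{H^3}$ using that the \emph{limit} density $\rho$ lies in $H^4$ — I would route the $\nabla\rho$ loss onto the limit solution rather than $\rho^\kappa$. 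For the velocity difference, subtracting the momentum equations yields
\[
\rho^\kappa \big[\pa_t v + (u^\kappa \cdot \nabla) v\big] - \Delta v + \nabla(\pi^\kappa - \pi) = -\sigma\big[\pa_t u + (u\cdot\nabla)u\big] - \rho^\kappa (v \cdot \nabla) u - \kappa \nabla\rho^\kappa\, k_\alpha^2 \Delta \rho^\kappa,
\]
and the key point is that the capillarity term is now an $O(\kappa)$ forcing: since $k_\alpha^2$ is a bounded Fourier multiplier (uniformly in $\alpha$, with symbol $\le 1$), $\|\kappa \nabla\rho^\kappa\, k_\alpha^2 \Delta\rho^\kappa\|_{H^3} \ls \kappa \|\rho^\kappa - \bar\rho\|_{H^4}^2 \le C\kappa$ on $[0,T_0]$.

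Next I would perform the $H^3$ energy estimate on $v$: apply $\partial^s$ for $|s|\le 3$, test against $\rho^\kappa \partial^s v$ (equivalently use the weighted inner product $\int \rho^\kappa |\partial^s v|^2$), integrate by parts in the $-\Delta v$ term to produce $\|\nabla v\|_{H^3}^2$ with a good sign, and handle the pressure term by the divergence-free condition $\dv v = 0$ together with the usual $[\partial^s, \rho^\kappa]$ commutator technology — the term $\int \partial^s(\rho^\kappa \pa_t v)\cdot\partial^s v$ also produces a harmless $\int \pa_t\rho^\kappa |\partial^s v|^2$ contribution controlled by $\|\pa_t \rho^\kappa\|_{L^\infty} = \|u^\kappa\cdot\nabla\rho^\kappa\|_{L^\infty} \le C$. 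The right-hand side contributions are estimated by: $\|\sigma[\pa_t u + (u\cdot\nabla)u]\|_{H^3} \ls \|\sigma\|_{H^3}(\|\pa_t u\|_{H^3} + \|u\|_{H^3}\|\nabla u\|_{H^3})$, using the bound on $\pa_t u$ coming from the limit equation; $\|\rho^\kappa(v\cdot\nabla)u\|_{H^3}\ls C\|v\|_{H^3}$ via Moser estimates and the $H^4$ bound on $u$; and the capillarity term gives the $C\kappa\|v\|_{H^3}$ term. Combining, one arrives at
\[
\frac{d}{dt}\big(\|\sigma\|_{H^3}^2 + \|v\|_{H^3}^2\big) + \|\nabla v\|_{H^3}^2 \ls C(T_0)\big(\|\sigma\|_{H^3}^2 + \|v\|_{H^3}^2\big) + C(T_0)\kappa^2,
\]
where all constants depend only on the uniform-in-$\kappa$ norms of the two solutions on $[0,T_0]$ and on $\kappa_0$. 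Since $\sigma(0) = v(0) = 0$, Gronwall's inequality yields $\sup_{[0,T_0]}(\|\sigma\|_{H^3}^2 + \|v\|_{H^3}^2) \le C(T_0)\kappa^2$, which is the claimed bound.

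The main obstacle I anticipate is the regularity bookkeeping in the density/transport estimate: the $H^3$ estimate on $v$ naturally demands control of $\nabla u$ and $\nabla u^\kappa$ in $H^3$, i.e.\ $H^4$ velocity regularity, which is one derivative above what Theorem \ref{thm:lwp} guarantees. The cleanest fix is to observe that wherever a fourth derivative of velocity threatens to appear, it multiplies either $\sigma$ or the difference $v$, and can be integrated by parts or rebalanced so that the top-order derivative falls on the density (which \emph{is} in $H^4$) rather than on the velocity; alternatively one can prove the estimate first at the $H^2\times H^2$ level (where no regularity deficit arises) and then note that the statement as given is $H^3\times H^3$, so one must genuinely be careful. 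A secondary technical point is ensuring all the Moser-type product and commutator estimates, and the lower bound $\rho^\kappa \ge \rho_m/2 > 0$ needed to invert the weight, hold uniformly on $[0,T_0]$ — but these follow from Theorem \ref{thm:lwp} and the continuity of $\rho^\kappa$ in $H^4 \hookrightarrow C^2$, so they are routine. The capillarity term itself, which is the whole point of the limit, is in fact the easy part, being a clean $O(\kappa)$ source.
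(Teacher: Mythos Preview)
Your overall architecture---difference equations, weighted $H^3$ energy, capillarity as an $O(\kappa)$ source, Gr\"onwall---matches the paper. But two concrete steps in your bookkeeping do not close, and the paper's proof turns on exactly these points.

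First, when you expand $\int \partial^s(\rho^\kappa \pa_t v)\cdot \partial^s v$ for $|s|\le 3$, you mention only the ``harmless'' contribution $\int \pa_t\rho^\kappa\,|\partial^s v|^2$. You omit the commutator $\langle [\partial^s,\rho^\kappa]\pa_t v,\partial^s v\rangle$, which by the Kato--Ponce estimate costs $\|\nabla\rho^\kappa\|_{H^2\cap L^\infty}\|\pa_t v\|_{H^2}\|v\|_{H^3}$. The quantity $\|\pa_t v\|_{H^2}$ is \emph{not} part of your energy and cannot be absorbed by the dissipation alone. The paper closes this by an additional estimate (its Lemma~5.3): one applies $D^\gamma$, $|\gamma|\le 2$, to the momentum difference equation and tests against $D^\gamma\pa_t\delta u$, producing $\int_0^t\|\pa_t\delta u\|_{H^2}^2\,ds$ on the left. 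This is the missing ingredient; without it the $H^3$ estimate for $v$ does not close.

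Second, you invoke $\|\sigma[\pa_t u+(u\cdot\nabla)u]\|_{H^3}\lesssim \|\sigma\|_{H^3}\|\pa_t u\|_{H^3}$ and $\|\rho^\kappa(v\cdot\nabla)u\|_{H^3}\lesssim \|v\|_{H^3}$ ``via the $H^4$ bound on $u$''. Neither $\|\pa_t u\|_{H^3}$ nor $\|u\|_{H^4}$ is available uniformly in time: the limit estimate only gives $\int_0^t\|\pa_t u\|_{H^2}^2+\|\nabla u\|_{H^3}^2\,ds<\infty$. Your suggested fix (``rebalance so the top derivative falls on the density'') does not apply to these terms, since the deficit is in $u$, not $\rho$. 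The paper's device is to integrate by parts once: for $|\gamma|=3$ write $-\langle D^\gamma(\rho^\kappa\delta u\cdot\nabla u),D^\gamma\delta u\rangle \le \|\rho^\kappa\delta u\cdot\nabla u\|_{H^2}\|\nabla\delta u\|_{H^3}$, and likewise for the $\sigma\,\pa_t u$ term, then absorb $\tau\|\nabla\delta u\|_{H^3}^2$ into the viscous dissipation. This trades one derivative on the data for one on $\delta u$, which is exactly what the parabolic structure permits.

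A minor point: your transport decomposition $\pa_t\sigma+u^\kappa\cdot\nabla\sigma=-v\cdot\nabla\rho$ forces $\rho\in H^4$, while the theorem only asserts $\rho-\bar\rho\in H^3$. The paper writes instead $\pa_t\delta\rho+\delta u\cdot\nabla\rho^\kappa+u\cdot\nabla\delta\rho=0$, which only needs $\rho^\kappa\in H^4$ (granted by Theorem~1.1) and $u\in H^3$.
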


The remaining part of the paper is organized as follows. In Section \ref{sec:2}, we provide several preliminary tools that will be used in the later analysis. Section \ref{sec:3} presents the local well-posedness of the nonlocal INSK system \eqref{NSK-relax} based on the iterative method. Then, in Section \ref{sec:4} and Section \ref{sec:5}, we deal with the convergence of the nonlocal INSK system \eqref{NSK-relax} towards the local INSK system \eqref{NSK} and the incompressible Navier--Stokes system \eqref{NS} as $\alpha\to+\infty$ and as $\kappa\to0$ respectively.

\subsection*{Notation}
In the following, we omit the domain of integration when we take the integral over $\R^d$, that is, $\int:=\int_{\R^d}$. We also omit the domain in the function spaces and use the abbreviation $L^p:=L^p(\R^d)$ and $H^s:=H^s(\R^d)$. We denote the standard $L^2$-inner product between $f,g\in L^2$ as $\la f,g\ra$. A generic constant $C$ may differ from line to line, unless we specify the constant.

\section{Preliminaries}\label{sec:2}

In this section, we collect several preliminary lemmas that will be used in the subsequent analysis. We begin by stating some properties of the pseudo-differential operator $k_\alpha$.

\begin{lemma}\label{lem:est-k_alpha}
	For any $f\in H^s$, we have
	\begin{equation*}
		\|k_\alpha f\|_{H^s}^2 \le \|f\|_{H^s}^2,\quad \mbox{and}\quad  \|\nabla k^2_\alpha f\|_{H^s}^2\le \alpha^2\|f\|_{H^s}^2. 
	\end{equation*}
	Moreover, for any $f\in H^{s+l}$, $0\leq l \leq 2$, it holds that
	\[\|(k^2_\alpha-I)f\|_{H^s}^2\le \frac{1}{\alpha^{2l}}\|f\|_{H^{s+l}}^2.\]
	Finally, $k_\alpha$ is a self-adjoint operator on $L^2$.
\end{lemma}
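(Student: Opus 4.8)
The plan is to work entirely on the Fourier side, since all three operators $k_\alpha$, $k_\alpha^2$, and $\nabla k_\alpha^2$ are Fourier multipliers and the Sobolev norm $\|f\|_{H^s}^2 = \int (1+|\xi|^2)^s |\hat f(\xi)|^2\,d\xi$ linearizes under the Plancherel identity. The key observation is that the symbol $k_\alpha(\xi) = \alpha/\sqrt{\alpha^2+|\xi|^2}$ satisfies $0 < k_\alpha(\xi) \le 1$ for every $\xi$, so that $k_\alpha(\xi)^2 \le 1$ as well. First I would write
\[
\|k_\alpha f\|_{H^s}^2 = \int (1+|\xi|^2)^s k_\alpha(\xi)^2 |\hat f(\xi)|^2 \, d\xi \le \int (1+|\xi|^2)^s |\hat f(\xi)|^2 \, d\xi = \|f\|_{H^s}^2,
\]
which gives the first inequality immediately.

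For the second inequality, the multiplier of $\nabla k_\alpha^2$ is $i\xi\, k_\alpha(\xi)^2 = i\xi\,\alpha^2/(\alpha^2+|\xi|^2)$, so its squared modulus is $|\xi|^2 \alpha^4/(\alpha^2+|\xi|^2)^2$. The point is the elementary bound
\[
\frac{|\xi|^2 \alpha^4}{(\alpha^2+|\xi|^2)^2} = \alpha^2 \cdot \frac{|\xi|^2}{\alpha^2+|\xi|^2} \cdot \frac{\alpha^2}{\alpha^2+|\xi|^2} \le \alpha^2,
\]
since each of the last two fractions lies in $[0,1]$. Plugging this into the Fourier representation of $\|\nabla k_\alpha^2 f\|_{H^s}^2$ yields the claimed $\alpha^2\|f\|_{H^s}^2$ bound. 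For the third inequality, the multiplier of $k_\alpha^2 - I$ is
\[
\frac{\alpha^2}{\alpha^2+|\xi|^2} - 1 = -\frac{|\xi|^2}{\alpha^2+|\xi|^2},
\]
whose squared modulus is $|\xi|^4/(\alpha^2+|\xi|^2)^2$. Here I would use, for $0 \le l \le 2$, the interpolated bound
\[
\frac{|\xi|^4}{(\alpha^2+|\xi|^2)^2} = \left(\frac{|\xi|^2}{\alpha^2+|\xi|^2}\right)^{2-l}\left(\frac{|\xi|^2}{\alpha^2+|\xi|^2}\right)^{l} \le \left(\frac{|\xi|^2}{\alpha^2+|\xi|^2}\right)^{l} \le \frac{|\xi|^{2l}}{\alpha^{2l}},
\]
where the first inequality uses $|\xi|^2/(\alpha^2+|\xi|^2) \le 1$ applied to the exponent $2-l \ge 0$, and the second uses $\alpha^2 + |\xi|^2 \ge \alpha^2$ applied to the exponent $l \ge 0$. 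Multiplying by $(1+|\xi|^2)^s$, integrating, and using $(1+|\xi|^2)^s |\xi|^{2l} \le (1+|\xi|^2)^{s+l}$ gives $\|(k_\alpha^2 - I)f\|_{H^s}^2 \le \alpha^{-2l}\|f\|_{H^{s+l}}^2$.

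Finally, self-adjointness of $k_\alpha$ on $L^2$ follows because its symbol $k_\alpha(\xi)$ is real-valued: for $f, g \in L^2$, Plancherel gives $\langle k_\alpha f, g\rangle = \int k_\alpha(\xi)\hat f(\xi)\overline{\hat g(\xi)}\,d\xi = \int \hat f(\xi)\overline{k_\alpha(\xi)\hat g(\xi)}\,d\xi = \langle f, k_\alpha g\rangle$. I do not anticipate a genuine obstacle here; the only mild subtlety is making sure the interpolation exponent split in the third inequality is presented cleanly so that the endpoint cases $l=0$ (trivial) and $l=2$ (the bound $|\xi|^4/(\alpha^2+|\xi|^2)^2 \le |\xi|^4/\alpha^4$) are visibly contained in it.
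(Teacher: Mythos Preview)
Your proposal is correct and follows essentially the same approach as the paper: both arguments pass to the Fourier side, use the explicit symbol $k_\alpha(\xi)=\alpha/\sqrt{\alpha^2+|\xi|^2}$, and bound each multiplier pointwise in $\xi$ before integrating against $(1+|\xi|^2)^s|\hat f(\xi)|^2$. Your treatment of the third inequality is in fact slightly more careful than the paper's, since you make the interpolation step $|\xi|^{2l}\le (1+|\xi|^2)^l$ explicit rather than writing equality.
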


\begin{proof}
	Using the definition of $k_\alpha$, we get
	\[\|k_\alpha f\|_{H^s}^2 = \int \left(\frac{\alpha^2}{\alpha^2+|\xi|^2}\right)(1+|\xi|^2)^s|\hat{f}(\xi)|^2\,d\xi\le \int(1+|\xi|^2)^s|\hat{f}(\xi)|^2\,d\xi=\|f\|_{H^s}^2.\]
	Similarly, we obtain
	\[\|\nabla k_\alpha^2 f\|_{H^s}^2 = \int \left(\frac{\alpha^2}{\alpha^2+|\xi|^2}\right)^2|\xi|^2(1+|\xi|^2)^s|\hat{f}(\xi)|^2\,d\xi\le \alpha^2\int (1+|\xi|^2)^s|\hat{f}(\xi)|^2\,d\xi\le\alpha^2\|f\|_{H^s}^2.\]
	Next, we again use the definition of $k_\alpha$ to obtain
	\begin{align*}
		\|(k_\alpha^2-I)f\|_{H^s}^2&=\int \left(\frac{|\xi|^2}{\alpha^2+|\xi|^2}\right)^2(1+|\xi|^2)^s|\hat{f}(\xi)|^2\,d\xi \\
		&\le \frac{1}{\alpha^{2l}}\int |\xi|^{2l}(1+|\xi|^2)^{s}|\hat{f}(\xi)|^2\,d\xi=\frac{1}{\alpha^{2l}}\|f\|_{H^{s+l}}^2.
	\end{align*}
	Finally, it is straightforward to show that $k_\alpha$ is a self-adjoint operator on $L^2$, since
	\[\la k_\alpha f,g\ra=\int \frac{\alpha}{\sqrt{\alpha^2+|\xi|^2}}\hat{f}(\xi)\hat{g}(\xi)\,d\xi=\la f,k_\alpha g\ra.\]
\end{proof}

Next, we recall the Gagliardo--Nirenberg--Sobolev inequality, as well as commutator and product estimates in Sobolev spaces.

\begin{lemma}\label{lem:sobolev}
	Let $p,q,r\in [1,+\infty]$ and $i,j,m$ be non-negative integers such that $i,j\le m$. Then, there exists a constant $\theta\in[0,1]$ and $C>0$ such that 
	\[\|D^ju\|_{L^p} \le C\|D^m u\|_{L^r}^\theta \|D^iu\|_{L^q}^{1-\theta},\]
	where
	\[\frac{1}{p}=\frac{j}{d}+\theta\left(\frac{1}{r}-\frac{m}{d}\right)+(1-\theta)\left(\frac{1}{q}-\frac{i}{d}\right).\]
\end{lemma}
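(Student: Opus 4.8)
This is the classical Gagliardo--Nirenberg--Sobolev interpolation inequality, and the plan is to establish it by the standard three-part route: a scaling reduction that forces the dimensional balance and pins down $\theta$, a reduction of the intermediate index $i$ to the base case $i=0$, and a direct proof of the base case via one-dimensional interpolation combined with the slicing form of the Sobolev embedding. Throughout one first proves the estimate for $u\in C_c^\infty(\R^d)$ and then extends to the relevant Sobolev class by density, so that all integrations by parts and the vanishing of boundary terms are legitimate.

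First I would carry out the scaling argument. For $\lambda>0$ set $u_\lambda(x)=u(\lambda x)$, so that $\|D^ku_\lambda\|_{L^s}=\lambda^{k-d/s}\|D^ku\|_{L^s}$ for every order $k$ and exponent $s$. Substituting $u_\lambda$ into the claimed inequality and requiring that the power of $\lambda$ agree on both sides for all $\lambda>0$ (since $C$ must be independent of $\lambda$) yields exactly
\[
j-\frac{d}{p}=\theta\left(m-\frac{d}{r}\right)+(1-\theta)\left(i-\frac{d}{q}\right),
\]
which, after multiplying by $d$ and rearranging, is precisely the stated relation for $1/p$. Hence the dimensional balance is not an extra hypothesis but the scale-invariance condition itself, it identifies $\theta$ uniquely from the remaining data, and it reduces the problem to proving \emph{any} estimate of the correct homogeneity: scaling then upgrades such an estimate to the sharp multiplicative form.

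Next I would reduce to the base case $i=0$. Writing a general $j$-th order derivative as $D^{j-i}$ applied to an $i$-th order derivative $v=D^\beta u$ with $|\beta|=i$, and noting $0\le j-i\le m-i$, the base-case inequality applied to $v$ with orders $(0,\,j-i,\,m-i)$ gives, after summing over the finitely many multi-indices $\beta$,
\[
\|D^ju\|_{L^p}\le C\,\|D^mu\|_{L^r}^{\theta}\,\|D^iu\|_{L^q}^{1-\theta},
\]
where the exponent produced by the base case coincides with the $\theta$ already fixed by scaling. Everything therefore comes down to the base case $\|D^ju\|_{L^p}\le C\|D^mu\|_{L^r}^{\theta}\|u\|_{L^q}^{1-\theta}$. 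For this the engine is one-dimensional interpolation between consecutive derivatives together with the fundamental theorem of calculus: the cleanest self-contained instance is the Hilbert identity $\|u'\|_{L^2}^2=-\la u,u''\ra\le\|u\|_{L^2}\|u''\|_{L^2}$, and its general-$L^p$ analogue follows from a Taylor-with-remainder representation of $u'$ in terms of $u$ and $u''$ over an interval of length $h$, optimized over $h$ after Hölder's inequality. Iterating this across intermediate orders, applying it coordinate by coordinate in $\R^d$, and reassembling the mixed-direction estimates through repeated Hölder and Minkowski inequalities as in the slicing proof of the Sobolev embedding produces an estimate of the correct homogeneity, which the scaling step above then sharpens into the stated inequality.

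The main obstacle is precisely this base case for general exponents $p,q,r$ away from the Hilbert setting $p=q=r=2$: the one-dimensional interpolation for arbitrary $p$ and the bookkeeping required to pass from the coordinate-wise one-dimensional estimates to the full $\R^d$ statement via iterated Hölder and Minkowski is where the genuine work lies. One must also treat the degenerate and borderline configurations separately, namely the endpoints $\theta\in\{0,1\}$, exponents equal to $1$ or $\infty$, and the well-known exceptional case in which $m-j-d/r$ is a nonnegative integer, where the admissible range of $\theta$ must be restricted to $[\,j/m,1)$. I note that in every instance actually invoked in this paper the exponents are $L^2$-based, for which a shorter Fourier/Littlewood--Paley proof is available; nonetheless we record the lemma in its general classical form, whose justification follows the route outlined above.
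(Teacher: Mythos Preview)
The paper does not prove this lemma: it is stated in the preliminaries section as a classical result (``we recall the Gagliardo--Nirenberg--Sobolev inequality'') and no argument is given. Your proposal, by contrast, supplies an actual proof sketch following the standard route---scaling to fix the dimensional balance, reduction of the lower index $i$ to the base case $i=0$, and then one-dimensional interpolation plus slicing for the base case. This outline is essentially correct and is one of the canonical approaches; you also correctly flag the genuine technical work (the base case for general exponents) and the exceptional/borderline configurations. Since the paper simply quotes the inequality without proof, there is nothing to compare at the level of argument: you are providing strictly more than the paper does.
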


\begin{lemma}\label{lem:commutator}
	For any multi-index $\gamma=(\gamma_1,\gamma_2,\gamma_3)$ with $|\gamma|:=\gamma_1+\gamma_2+\gamma_3$, let $D^\gamma$ be the differential operator $D^\gamma = \pa_1^{\gamma_1}\pa_2^{\gamma_2}\pa_3^{\gamma_3}$. Then, for any $f\in H^s$, $\nabla f\in L^\infty$ and $g\in H^{s-1}\cap L^\infty$, $s\in\N$, we have 
	\[\sum_{|\gamma|\le s} \|D^\gamma (fg)-fD^\gamma g\|_{L^2}\le C (\|f\|_{H^s}\|g\|_{L^\infty}+\|\nabla f\|_{L^\infty}\|g\|_{H^{s-1}}).\]
	Moreover, for $f,g\in H^s\cap L^\infty$, we have
	\[\sum_{|\gamma|\le s} \|D^\gamma(fg)\|_{L^2}\le C(\|f\|_{H^s}\|g\|_{L^\infty}+\|f\|_{L^\infty}\|g\|_{H^s}).\]
\end{lemma}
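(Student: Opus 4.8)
The plan is to reduce both inequalities to the Leibniz rule together with the Gagliardo--Nirenberg--Sobolev interpolation of Lemma \ref{lem:sobolev}, Hölder's inequality, and Young's inequality. I would start with the product estimate, since the commutator estimate is a refinement of the same scheme. Fix a multi-index $\gamma$ with $|\gamma| = m \le s$ and expand $D^\gamma(fg) = \sum_{\beta \le \gamma}\binom{\gamma}{\beta} D^\beta f\, D^{\gamma-\beta}g$. For a term with $|\beta| = k$ I would split by Hölder with the conjugate exponents $p_1 = 2m/k$ and $p_2 = 2m/(m-k)$ (so that $1/p_1 + 1/p_2 = 1/2$), giving $\|D^\beta f\, D^{\gamma-\beta}g\|_{L^2} \le \|D^\beta f\|_{L^{2m/k}}\|D^{\gamma-\beta}g\|_{L^{2m/(m-k)}}$.

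Then I apply Lemma \ref{lem:sobolev} with $r = 2$, $q = \infty$, $i = 0$ and interpolation parameter $\theta = k/m$ (respectively $(m-k)/m$), which yields $\|D^\beta f\|_{L^{2m/k}} \le C\|D^m f\|_{L^2}^{k/m}\|f\|_{L^\infty}^{1-k/m}$ and the analogous bound for $g$. Multiplying and regrouping, each term is dominated by $A^{k/m}B^{(m-k)/m}$ with $A = \|D^m f\|_{L^2}\|g\|_{L^\infty}$ and $B = \|f\|_{L^\infty}\|D^m g\|_{L^2}$; since the exponents are conjugate, Young's inequality gives $A^{k/m}B^{(m-k)/m} \le A + B$, which is controlled by $\|f\|_{H^s}\|g\|_{L^\infty} + \|f\|_{L^\infty}\|g\|_{H^s}$. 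Summing over $\beta$ and $\gamma$ closes the product estimate, with the boundary cases $k = 0$ and $k = m$ handled by a direct Hölder bound.

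For the commutator estimate the key structural point is that subtracting $f\, D^\gamma g$ removes exactly the $\beta = 0$ term, so every surviving term carries $|\beta| = k \ge 1$, i.e.\ at least one derivative on $f$ and at most $m-1 \le s-1$ on $g$. I would therefore peel one derivative off $f$, writing $D^\beta f$ as $(k-1)$ derivatives of $\nabla f$, and interpolate against $\|\nabla f\|_{L^\infty}$ rather than $\|f\|_{L^\infty}$: Lemma \ref{lem:sobolev} with base order $m-1$ gives $\|D^\beta f\|_{L^{p_1}} \le C\|D^m f\|_{L^2}^{(k-1)/(m-1)}\|\nabla f\|_{L^\infty}^{(m-k)/(m-1)}$ with $p_1 = 2(m-1)/(k-1)$, while the matching estimate for $g$ uses base order $m-1$ to produce $\|D^{m-1}g\|_{L^2} \le \|g\|_{H^{s-1}}$ with $p_2 = 2(m-1)/(m-k)$, again Hölder-conjugate. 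Regrouping into $A = \|D^m f\|_{L^2}\|g\|_{L^\infty}$ and $B = \|\nabla f\|_{L^\infty}\|D^{m-1}g\|_{L^2}$ with conjugate powers $(k-1)/(m-1)$ and $(m-k)/(m-1)$, Young's inequality again converts the product to $A + B \le \|f\|_{H^s}\|g\|_{L^\infty} + \|\nabla f\|_{L^\infty}\|g\|_{H^{s-1}}$; the extreme term $k = m$ (all derivatives on $f$) is bounded directly by $\|D^m f\|_{L^2}\|g\|_{L^\infty}$.

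The routine part is the bookkeeping, but the step that needs care is the simultaneous choice of the Hölder exponents $(p_1, p_2)$ and the interpolation parameters so that (i) $1/p_1 + 1/p_2 = 1/2$ and (ii) the resulting powers on $A$ and $B$ are Hölder-conjugate, allowing Young's inequality to collapse the product of fractional powers into the clean sum on the right-hand side. One must also treat the degenerate endpoints separately, namely $k = 1$ (or $m = 1$), where $\theta = 0$ and the interpolation collapses to a plain $\|\nabla f\|_{L^\infty}$ bound, and $k = m$, where $g$ is undifferentiated, since there the general interpolation exponents $p_1, p_2$ become infinite. These endpoint cases are exactly where the two terms on the right-hand side originate, so identifying them correctly is what makes the final estimate sharp.
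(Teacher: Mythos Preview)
Your approach is correct and is precisely the classical proof of these Moser-type estimates: Leibniz expansion, H\"older with the scaling-matched exponents $p_1=2m/k$, $p_2=2m/(m-k)$ (respectively $2(m-1)/(k-1)$ and $2(m-1)/(m-k)$ for the commutator), Gagliardo--Nirenberg interpolation between the top-order $L^2$ norm and the $L^\infty$ norm, and Young's inequality to collapse the conjugate powers. The endpoint bookkeeping you flag ($k=0$, $k=m$, and $m=1$) is indeed where care is needed, and you have identified the right direct bounds there.

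As for comparison with the paper: the paper does not supply a proof of Lemma~\ref{lem:commutator} at all---it is recorded among the preliminaries in Section~\ref{sec:2} as a standard tool, alongside Lemma~\ref{lem:sobolev}, and is used throughout without further justification. Your write-up is exactly the argument one would give if asked to fill in that omitted proof.
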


\section{Local well-posedness of the relaxed incompressible NSK system} \label{sec:3}

In this section, we present the proof of Theorem \ref{thm:lwp}, the local well-posedness of the relaxed INSK system \eqref{NSK-relax}. The proof is divided into two main steps. We first show that \eqref{NSK-relax} has a unique local solution, whose existence time may depend on $\alpha$, by considering a sequence of approximate solutions (Section \ref{sec:3-1}--Section \ref{sec:3-2}). Then, we show that the existence time is indeed bounded from below uniformly in $\alpha$ (Section \ref{sec:3-3}) by careful energy estimate.

To this end, we consider a sequence of approximate solutions $(\rho^n,u^n,\eta^n)$ defined as the solution of the following linearized system

\begin{subequations}\label{INSK-relax-approx}
	\begin{align}
		&\pa_t\rho^{n+1}+u^{n+1}\cdot\nabla\rho^{n+1}=0, \label{INSK-relax-approx-a}\\
		&\rho^n[\pa_t u^{n+1}+(u^n\cdot\nabla)u^{n+1}] -\Delta u^{n+1}+\nabla \pi^{n+1}=-\kappa\eta^n \dv k_\alpha^2\eta^{n+1}, \label{INSK-relax-approx-b}\\
		&\pa_t \eta_j^{n+1}+u^n\cdot \nabla\eta_j^{n+1}+\pa_j u^{n+1}\cdot \eta^n=0, \label{INSK-relax-approx-c}\\
		&\dv u^{n+1}=0\label{INSK-relax-approx-d}
	\end{align}
\end{subequations}

subject to the initial data
\[\rho^n(0,x) = \rho_0(x),\quad u^n(0,x)=u_0(x),\quad \eta^n(0,x)=\eta_0(x)=\nabla\rho_0(x).\]

\subsection{Uniform boundedness of the approximate solutions}
\label{sec:3-1}

We first show that the sequence $(\rho^n-\bar{\rho},u^n,\eta^n)$ is bounded in $H^3$. First of all, it follows from \eqref{INSK-relax-approx-a} that the density is constant along the Lagrangian frame:
\[\rho^{n+1}(t,x)=\rho_0(X(0)),\quad\mbox{where}\quad \frac{dX}{ds} = u^{n+1}(s,X(s)),\quad X(t)=x,\]
which immediately implies the uniform bound $\rho_m\le \rho^{n+1}(t,x)\le \rho_M$ for all $t>0$ and $x\in\R^3$. We start with the $H^3$-estimates of $\tilde\rho^n = \rho^n-\bar{\rho}$ and $\eta^n$.

\begin{lemma}\label{lem:tilderho_H3}
	Under the assumption of Theorem \ref{thm:lwp}, we have
	\[\frac{d}{dt}\|\tilde{\rho}^{n+1}\|_{H^3}^2\le C\|u^{n+1}\|_{H^3}\|\tilde{\rho}^{n+1}\|_{H^3}^2.\]
\end{lemma}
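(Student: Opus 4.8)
The plan is to regard \eqref{INSK-relax-approx-a} as a linear transport equation for $\rho^{n+1}$ with divergence-free drift $u^{n+1}$ and to run the classical commutator-based energy estimate in $H^3$. Since $\bar\rho$ is a constant, $\tilde\rho^{n+1}=\rho^{n+1}-\bar\rho$ solves the same equation $\pa_t\tilde\rho^{n+1}+u^{n+1}\cdot\nabla\tilde\rho^{n+1}=0$. First I would fix a multi-index $\gamma$ with $|\gamma|\le 3$, apply $D^\gamma$, and take the $L^2$-inner product with $D^\gamma\tilde\rho^{n+1}$ to get
\[
\frac12\frac{d}{dt}\|D^\gamma\tilde\rho^{n+1}\|_{L^2}^2+\la u^{n+1}\cdot\nabla D^\gamma\tilde\rho^{n+1},\,D^\gamma\tilde\rho^{n+1}\ra=-\la [D^\gamma,u^{n+1}\cdot\nabla]\tilde\rho^{n+1},\,D^\gamma\tilde\rho^{n+1}\ra .
\]
By the incompressibility constraint \eqref{INSK-relax-approx-d} and integration by parts, $\la u^{n+1}\cdot\nabla D^\gamma\tilde\rho^{n+1},D^\gamma\tilde\rho^{n+1}\ra=-\tfrac12\int(\dv u^{n+1})|D^\gamma\tilde\rho^{n+1}|^2=0$, so only the commutator term on the right survives (and for $|\gamma|=0$ this term is simply zero).

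Next I would estimate the commutator. Writing $[D^\gamma,u^{n+1}\cdot\nabla]\tilde\rho^{n+1}=\sum_i\big(D^\gamma(u_i^{n+1}\pa_i\tilde\rho^{n+1})-u_i^{n+1}D^\gamma\pa_i\tilde\rho^{n+1}\big)$, using that $D^\gamma$ commutes with $\pa_i$, I would apply the commutator estimate of Lemma \ref{lem:commutator} with $f=u_i^{n+1}\in H^3$, $g=\pa_i\tilde\rho^{n+1}\in H^2\cap L^\infty$ and $s=3$, then sum over $i$ and over $|\gamma|\le 3$ to obtain
\[
\sum_{|\gamma|\le 3}\big\|[D^\gamma,u^{n+1}\cdot\nabla]\tilde\rho^{n+1}\big\|_{L^2}\le C\big(\|u^{n+1}\|_{H^3}\|\nabla\tilde\rho^{n+1}\|_{L^\infty}+\|\nabla u^{n+1}\|_{L^\infty}\|\nabla\tilde\rho^{n+1}\|_{H^2}\big).
\]
Since $d\in\{2,3\}$, the Sobolev embedding $H^2\hookrightarrow L^\infty$ gives $\|\nabla\tilde\rho^{n+1}\|_{L^\infty}\le C\|\tilde\rho^{n+1}\|_{H^3}$ and $\|\nabla u^{n+1}\|_{L^\infty}\le C\|u^{n+1}\|_{H^3}$, while $\|\nabla\tilde\rho^{n+1}\|_{H^2}\le\|\tilde\rho^{n+1}\|_{H^3}$, so the right-hand side is bounded by $C\|u^{n+1}\|_{H^3}\|\tilde\rho^{n+1}\|_{H^3}$.

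Finally I would apply Cauchy--Schwarz to the surviving term, $|\la [D^\gamma,u^{n+1}\cdot\nabla]\tilde\rho^{n+1},D^\gamma\tilde\rho^{n+1}\ra|\le \|[D^\gamma,u^{n+1}\cdot\nabla]\tilde\rho^{n+1}\|_{L^2}\|\tilde\rho^{n+1}\|_{H^3}$, sum over $|\gamma|\le 3$, and use the equivalence of $\sum_{|\gamma|\le 3}\|D^\gamma\cdot\|_{L^2}^2$ with $\|\cdot\|_{H^3}^2$, which yields $\frac{d}{dt}\|\tilde\rho^{n+1}\|_{H^3}^2\le C\|u^{n+1}\|_{H^3}\|\tilde\rho^{n+1}\|_{H^3}^2$. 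I do not expect a genuine obstacle here: this is the standard $H^s$ transport estimate, and the only point that needs care is the commutator, which is exactly what Lemma \ref{lem:commutator} is tailored for; the dimension restriction $d\le 3$ enters only through $H^2\hookrightarrow L^\infty$. Note also that the resulting constant $C$ is absolute (depending only on $d$), consistent with the statement, which in turn is what will let this estimate be combined with the other $H^3$ bounds to produce a lifespan independent of $\kappa$ and $\alpha$.
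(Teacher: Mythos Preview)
Your proposal is correct and follows essentially the same argument as the paper: both reduce to the transport equation for $\tilde\rho^{n+1}$, use $\dv u^{n+1}=0$ to kill the advection term, apply the commutator estimate of Lemma~\ref{lem:commutator} with $f=u^{n+1}$ and $g=\nabla\tilde\rho^{n+1}$, and close via the Sobolev embedding $H^2\hookrightarrow L^\infty$. The only cosmetic difference is that you spell out the integration-by-parts step and the embedding more explicitly than the paper does.
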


\begin{proof}
	First, it follows from \eqref{INSK-relax-approx-a} that $\tilde{\rho}^{n+1}$ also satisfies
	\begin{equation}\label{tilde-rho}
		\pa_t\tilde{\rho}^{n+1}+u^{n+1}\cdot\nabla\tilde{\rho}^{n+1}=0.
	\end{equation}
	We take $D^\gamma$ with $|\gamma|\le 3$ to \eqref{tilde-rho}, take an $L^2$-inner product with $D^\gamma\tilde{\rho}^{n+1}$ and Lemma \ref{lem:sobolev}, Lemma \ref{lem:commutator} to obtain
	\begin{align*}
		\frac{1}{2}\frac{d}{dt}\|D^\gamma\tilde{\rho}^{n+1}\|_{L^2}^2&=-\la D^\gamma(u^{n+1}\cdot\nabla\tilde{\rho}^{n+1})-u^{n+1}\cdot\nabla D^\gamma\tilde{\rho}^{n+1},D^\gamma\tilde{\rho}^{n+1}\ra\\
		&\le C(\|u^{n+1}\|_{H^3}\|\nabla\tilde{\rho}^{n+1}\|_{L^\infty}+\|\nabla u^{n+1}\|_{L^\infty}\|\nabla\tilde{\rho}^{n+1}\|_{H^2})\|\tilde{\rho}^{n+1}\|_{H^3}\\
		&\le C\|u^{n+1}\|_{H^3}\|\tilde{\rho}^{n+1}\|_{H^3}^2,
	\end{align*}
	where we used the incompressibility of $u^{n+1}$. Summing over all multi-indices $|\gamma|\le 3$, we obtain the desired estimate.
\end{proof}

\begin{lemma}\label{lem:eta_H3}
	Under the assumption of Theorem \ref{thm:lwp}, there exists a sufficiently small positive constant $\delta_0$ such that
	\[\frac{d}{dt}\|\eta^{n+1}\|_{H^3}^2\le C \|u^n\|_{H^3} \|\eta^{n+1}\|_{H^3}^2+C\|\eta^n\|_{H^3}^2\|\eta^{n+1}\|_{H^3}^2+\delta_0\|\nabla u^{n+1}\|^2_{H^3}.\]
\end{lemma}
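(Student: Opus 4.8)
The plan is to differentiate the transport-type equation \eqref{INSK-relax-approx-c} for $\eta^{n+1}_j$, pair with $D^\gamma\eta^{n+1}$ in $L^2$ over all multi-indices $|\gamma|\le 3$, and control each resulting term using the commutator and product estimates of Lemma \ref{lem:commutator} together with the Gagliardo--Nirenberg--Sobolev inequality of Lemma \ref{lem:sobolev}. First I would apply $D^\gamma$ to the equation $\pa_t\eta^{n+1}_j+u^n\cdot\nabla\eta^{n+1}_j+\pa_j u^{n+1}\cdot\eta^n=0$ and take the inner product with $D^\gamma\eta^{n+1}_j$, summing over $j$ and over $|\gamma|\le 3$. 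This produces three contributions: the term coming from the transport part, the commutator term $\la D^\gamma(u^n\cdot\nabla\eta^{n+1})-u^n\cdot\nabla D^\gamma\eta^{n+1},D^\gamma\eta^{n+1}\ra$, and the term $\la D^\gamma(\pa_j u^{n+1}\cdot\eta^n),D^\gamma\eta^{n+1}_j\ra$ from the stretching-type nonlinearity. The transport part vanishes (or is harmless) after integration by parts since $\dv u^n=0$; the commutator term is bounded by $C(\|u^n\|_{H^3}\|\nabla\eta^{n+1}\|_{L^\infty}+\|\nabla u^n\|_{L^\infty}\|\nabla\eta^{n+1}\|_{H^2})\|\eta^{n+1}\|_{H^3}\le C\|u^n\|_{H^3}\|\eta^{n+1}\|_{H^3}^2$ exactly as in the proof of Lemma \ref{lem:tilderho_H3}.

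The delicate term is $\la D^\gamma(\pa_j u^{n+1}\cdot\eta^n),D^\gamma\eta^{n+1}_j\ra$, because it contains one more derivative on $u^{n+1}$ than a clean $H^3$-bound on $u^{n+1}$ alone would allow — at top order it looks like $\la D^\gamma\nabla u^{n+1}\cdot\eta^n,D^\gamma\eta^{n+1}\ra$, which involves $D^4 u^{n+1}$. Here the point is \emph{not} to absorb it into $\|u^{n+1}\|_{H^3}$ but to keep it as a separate term $\|\nabla u^{n+1}\|_{H^3}^2$ with a small prefactor. Concretely, I would split $D^\gamma(\pa_j u^{n+1}\cdot\eta^n)$ using the Leibniz rule into the top-order piece $\eta^n\cdot D^\gamma\pa_j u^{n+1}$ plus lower-order pieces in which at least one derivative falls on $\eta^n$. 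The lower-order pieces are estimated by the product estimate (second inequality of Lemma \ref{lem:commutator}) as $C\|\eta^n\|_{H^3}\|\nabla u^{n+1}\|_{H^2}\|\eta^{n+1}\|_{H^3}\le C\|\eta^n\|_{H^3}\|u^{n+1}\|_{H^3}\|\eta^{n+1}\|_{H^3}$, which by Young's inequality is $\le C\|\eta^n\|_{H^3}^2\|\eta^{n+1}\|_{H^3}^2 + (\text{harmless }\|u^{n+1}\|_{H^3}^2)$; I would actually fold the $\|u^{n+1}\|_{H^3}$ factor differently so that it appears as $C\|u^n\|_{H^3}\|\eta^{n+1}\|_{H^3}^2$-type or be content to also carry a $\delta_0\|\nabla u^{n+1}\|_{H^3}^2$ from these. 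For the genuine top-order piece I would estimate $|\la \eta^n\cdot D^\gamma\pa_j u^{n+1}, D^\gamma\eta^{n+1}_j\ra|\le \|\eta^n\|_{L^\infty}\|\nabla u^{n+1}\|_{H^3}\|\eta^{n+1}\|_{H^3}\le \delta_0\|\nabla u^{n+1}\|_{H^3}^2 + C_{\delta_0}\|\eta^n\|_{L^\infty}^2\|\eta^{n+1}\|_{H^3}^2$, and then bound $\|\eta^n\|_{L^\infty}\le C\|\eta^n\|_{H^3}$ by Sobolev embedding (valid since $d\le 3$), giving the $C\|\eta^n\|_{H^3}^2\|\eta^{n+1}\|_{H^3}^2$ term in the statement, while the $\delta_0\|\nabla u^{n+1}\|_{H^3}^2$ term is kept and will later be absorbed by the parabolic smoothing/dissipation in the $u^{n+1}$-equation when the three estimates are combined.

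The main obstacle is precisely this loss of one derivative in $u^{n+1}$ in the stretching term: it cannot be closed within the $\eta$-estimate itself, so the lemma is deliberately stated with the remainder $\delta_0\|\nabla u^{n+1}\|_{H^3}^2$ on the right-hand side, with $\delta_0$ free to be chosen small. The choice of $\delta_0$ must be coordinated with the coefficient of the dissipation term $-\|\nabla u^{n+1}\|_{H^3}^2$ (after using $\rho^n\ge\rho_m>0$) that arises from the momentum equation \eqref{INSK-relax-approx-b}, so that in the total energy inequality the net coefficient in front of $\|\nabla u^{n+1}\|_{H^3}^2$ remains negative; hence ``$\delta_0$ sufficiently small'' really means ``smaller than a fixed multiple of $\rho_m$ (and of the constants hidden in the elliptic estimate for $-\Delta u^{n+1}$).'' Everything else is routine Leibniz-rule bookkeeping combined with Lemma \ref{lem:sobolev} and Lemma \ref{lem:commutator}, exactly parallel to the proof of Lemma \ref{lem:tilderho_H3}.
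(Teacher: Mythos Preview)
Your proposal is correct and follows essentially the same route as the paper. The only difference is cosmetic: for the stretching term $\la D^\gamma(\pa_j u^{n+1}\cdot\eta^n),D^\gamma\eta^{n+1}_j\ra$ the paper does not split into top-order and lower-order pieces via the Leibniz rule but applies the product estimate (second inequality of Lemma~\ref{lem:commutator}) directly to the whole expression, obtaining in one step
\[
C\bigl(\|\nabla u^{n+1}\|_{H^3}\|\eta^n\|_{L^\infty}+\|\nabla u^{n+1}\|_{L^\infty}\|\eta^n\|_{H^3}\bigr)\|\eta^{n+1}\|_{H^3}\le C\|\nabla u^{n+1}\|_{H^3}\|\eta^n\|_{H^3}\|\eta^{n+1}\|_{H^3},
\]
and then applies Young's inequality once. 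This avoids the slight hand-waving in your treatment of the lower-order pieces (where you hesitate between ``folding differently'' and ``also carrying a $\delta_0\|\nabla u^{n+1}\|_{H^3}^2$''); in fact the latter option is exactly what the paper's one-line estimate gives, since the lower-order pieces are already dominated by the same $\|\nabla u^{n+1}\|_{H^3}\|\eta^n\|_{H^3}$ bound.
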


\begin{proof}
	Applying $D^\gamma$ with $|\gamma|\le 3$ to \eqref{INSK-relax-approx-c}, we take $L^2$-inner product with $D^\gamma \eta^{n+1}_j$ to obtain
	\begin{align*}
		\frac{1}{2}\frac{d}{dt}\|D^\gamma\eta_j^{n+1}\|_{L^2}^2&=-\la D^\gamma(u^n\cdot\nabla\eta_j^{n+1}),D^\gamma\eta_j^{n+1}\ra - \la D^\gamma(\pa_j u^{n+1}\cdot\eta^n),D^\gamma \eta_j^{n+1}\ra\\
		&=-\la D^\gamma(u^n\cdot\nabla \eta_j^{n+1})-u^n\cdot \nabla D^\gamma \eta_j^{n+1},D^\gamma\eta_j^{n+1}\ra- \la D^\gamma(\pa_j u^{n+1}\cdot\eta^n),D^\gamma \eta_j^{n+1}\ra\\
		&\le C(\|u^n\|_{H^3}\|\nabla\eta^{n+1}\|_{L^\infty}+\|\nabla u^n\|_{L^\infty}\|\nabla\eta^{n+1}\|_{H^2})\|\eta^{n+1}\|_{H^3}\\
		&\quad + C(\|\nabla u^{n+1}\|_{H^3}\|\eta^n\|_{L^\infty}+\|\nabla u^{n+1}\|_{L^\infty}\|\eta^n\|_{H^3})\|\eta^{n+1}\|_{H^3}\\
		&\le C\|u^n\|_{H^3}\|\eta^{n+1}\|_{H^3}^2 + C\|\nabla u^{n+1}\|_{H^3}\|\eta^n\|_{H^3}\|\eta^{n+1}\|_{H^3}\\
		&\le C \|u^n\|_{H^3} \|\eta^{n+1}\|_{H^3}^2+C(\delta)\|\eta^n\|_{H^3}^2\|\eta^{n+1}\|_{H^3}^2+\delta\|\nabla u^{n+1}\|_{H^3}^2,
	\end{align*}
	for small enough constant $\delta$. Thus, gathering the estimate for all $|\gamma|\le 3$, we can find a sufficiently small constant $\delta_0$ that the desired estimate holds.
\end{proof}

We now derive the following $H^3$-estimate for $u^{n+1}$.

\begin{lemma}\label{lem:u_H3}
	Under the assumption of Theorem \ref{thm:lwp}, we have
	\begin{align*}
		&\|u^{n+1}(t)\|_{H^3}^2+\kappa\|k_\alpha \eta^{n+1}(t)\|_{H^3}^2 + \int_0^t \|\nabla u^{n+1}\|_{H^3}^2\,ds\\
		&\le C(\|u_0\|_{H^3}^2+\kappa\|k_\alpha\eta_0\|_{H^3}^2)\\
		&\quad +C(1+\alpha)\int_0^t (1+\|\tilde{\rho}^n\|_{H^3}^2+\|u^n\|_{H^3}^2+\|\eta^n\|_{H^3}^2)(\|u^{n+1}\|_{H^3}^2+\|\eta^{n+1}\|_{H^3}^2)\,ds+\int_0^t \|\pa_t u^{n+1}\|_{H^2}^2\,ds.
	\end{align*}
\end{lemma}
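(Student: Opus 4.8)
The plan is to perform a standard energy estimate on the momentum equation \eqref{INSK-relax-approx-b}, treating it as a linear parabolic equation for $u^{n+1}$ with variable coefficient $\rho^n$ and forcing term $-\kappa\eta^n\dv k_\alpha^2\eta^{n+1}$, and to absorb the capillarity contribution into a combined energy by exploiting the structural cancellation between \eqref{INSK-relax-approx-b} and \eqref{INSK-relax-approx-c}. Concretely, I would apply $D^\gamma$ with $|\gamma|\le 3$ to \eqref{INSK-relax-approx-b}, take the $L^2$-inner product with $D^\gamma u^{n+1}$, and integrate by parts in the pressure term using $\dv u^{n+1}=0$ so that $\la \nabla D^\gamma\pi^{n+1}, D^\gamma u^{n+1}\ra=0$ (more carefully, one must commute $D^\gamma$ past the nonconstant-coefficient structure, but the pressure still drops out after an integration by parts). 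The viscous term $-\la \Delta D^\gamma u^{n+1}, D^\gamma u^{n+1}\ra = \|\nabla D^\gamma u^{n+1}\|_{L^2}^2$ yields the good dissipative term $\int_0^t\|\nabla u^{n+1}\|_{H^3}^2\,ds$ on the left-hand side.

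The key point is the capillarity term. Writing the forcing as $-\kappa D^\gamma(\eta^n\dv k_\alpha^2\eta^{n+1})$ and pairing with $D^\gamma u^{n+1}$, I would split off the top-order piece $-\kappa\la \eta^n\dv D^\gamma k_\alpha^2\eta^{n+1}, D^\gamma u^{n+1}\ra$ and recognize that, up to commutators and after using self-adjointness of $k_\alpha$ (Lemma \ref{lem:est-k_alpha}), this pairs against the term $\la D^\gamma(\pa_j u^{n+1}\cdot\eta^n), D^\gamma\eta_j^{n+1}\ra$ arising from \eqref{INSK-relax-approx-c} when the latter is tested against $\kappa k_\alpha^2 D^\gamma\eta^{n+1}$ — indeed $\kappa\eta^n\dv k_\alpha^2\eta^{n+1}$ and $\kappa\,\pa_j u^{n+1}\eta^n_j$ are adjoint through the $k_\alpha^2$-weighted inner product, so their leading contributions cancel, leaving $\frac{1}{2}\frac{d}{dt}\|k_\alpha\eta^{n+1}\|_{H^3}^2$ plus commutator remainders. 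This is exactly why the combined energy $\|u^{n+1}\|_{H^3}^2 + \kappa\|k_\alpha\eta^{n+1}\|_{H^3}^2$ appears on the left. The commutator remainders are handled by Lemma \ref{lem:commutator} and the bound $\|\nabla k_\alpha^2 f\|_{H^3}\le\alpha\|f\|_{H^3}$, which is the source of the $C(1+\alpha)$ factor.

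The remaining terms are routine: the material-derivative terms $\rho^n[\pa_t + u^n\cdot\nabla]u^{n+1}$ produce, after commuting $D^\gamma$ through $\rho^n$ and $u^n\cdot\nabla$, a $\frac{1}{2}\frac{d}{dt}\int\rho^n|D^\gamma u^{n+1}|^2$ term (whose time derivative hitting $\rho^n$ is controlled via \eqref{INSK-relax-approx-a}) together with commutator terms estimated by $\|\rho^n\|_{H^3}$, $\|u^n\|_{H^3}$, $\|u^{n+1}\|_{H^3}$; since $\rho^n$ is bounded above and below by $\rho_M,\rho_m$, the quadratic form $\int\rho^n|D^\gamma u^{n+1}|^2$ is equivalent to $\|u^{n+1}\|_{H^3}^2$. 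Some cross terms cannot be paired into a time derivative and instead need the dissipation: these are bounded using Young's inequality, peeling off a small multiple of $\|\nabla u^{n+1}\|_{H^3}^2$ (absorbed into the left) at the cost of a $C(1+\alpha)(1+\|\tilde\rho^n\|_{H^3}^2+\|u^n\|_{H^3}^2+\|\eta^n\|_{H^3}^2)(\|u^{n+1}\|_{H^3}^2+\|\eta^{n+1}\|_{H^3}^2)$ term; the leftover $\int_0^t\|\pa_t u^{n+1}\|_{H^2}^2\,ds$ appears precisely from one such term where $\pa_t u^{n+1}$ cannot be converted to spatial derivatives and must be carried along (to be closed later, presumably by a separate estimate on $\pa_t u^{n+1}$). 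Integrating in time from $0$ to $t$ and using $\|k_\alpha\eta_0\|_{H^3}\le\|\eta_0\|_{H^3}$ then yields the stated inequality.

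The main obstacle I anticipate is correctly tracking the cancellation between the capillarity forcing in the momentum equation and the $\pa_j u^{n+1}\cdot\eta^n$ source in the $\eta$-equation through the $k_\alpha^2$-weighted pairing — in particular, verifying that the non-cancelling commutator terms are genuinely lower order and only cost a factor of $\alpha$ (not $\alpha^2$), which is essential for the uniform-in-$\alpha$ existence time in Section \ref{sec:3-3}. The bookkeeping of which cross terms land on $\|\nabla u^{n+1}\|_{H^3}^2$ versus which generate the $\|\pa_t u^{n+1}\|_{H^2}^2$ term also requires care.
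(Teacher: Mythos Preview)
Your proposal is correct and matches the paper's approach: apply $D^\gamma$ to \eqref{INSK-relax-approx-b} and test with $D^\gamma u^{n+1}$, simultaneously test \eqref{INSK-relax-approx-c} with $\kappa D^\gamma k_\alpha^2\eta_j^{n+1}$, and exploit the exact cancellation of the leading pieces of the capillarity term and the $\pa_j u^{n+1}\cdot\eta^n$ source (these combine into a full $\pa_j$-derivative against $\eta^n$ after one integration by parts, and are $\alpha$-free). One clarification on your anticipated obstacle: in the paper the factor $(1+\alpha)$ does \emph{not} come from the capillarity commutators but from the transport contribution $\la u^n\cdot\nabla D^\gamma\eta_j^{n+1},\kappa D^\gamma k_\alpha^2\eta_j^{n+1}\ra$, which is not a perfect derivative because $k_\alpha^2$ fails to commute with multiplication by $u^n$; integrating by parts shifts $\nabla$ onto $k_\alpha^2\eta^{n+1}$ and costs exactly one power of $\alpha$ via Lemma~\ref{lem:est-k_alpha}. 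Also, no absorption of a small multiple of $\|\nabla u^{n+1}\|_{H^3}^2$ is needed in this lemma --- every right-hand term is bounded directly without borrowing from the dissipation.
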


\begin{proof}
	
Again, we apply $D^\gamma$ with $|\gamma|\le 3$ to \eqref{INSK-relax-approx-b} and \eqref{INSK-relax-approx-c} and then take an $L^2$-inner product with $D^\gamma u^{n+1}$ and $\kappa D^\gamma k_\alpha^2\eta_j^{n+1}$ respectively to obtain 
\begin{align}
	\begin{aligned}\label{est:H3-u}
		&\frac{1}{2}\frac{d}{dt}\lt(\int\rho^{n}|D^\gamma u^{n+1}|^2\,dx+\kappa\|k_\alpha D^\gamma\eta^{n+1}\|_{L^2}^2\rt)+\|\nabla D^\gamma u^{n+1}\|_{L^2}^2\\
		&=-\frac{1}{2}\int \dv(\rho^n u^n)|D^\gamma u^{n+1}|^2\,dx -\la D^\gamma(\rho^n\pa_t u^{n+1})-\rho^nD^\gamma\pa_t u^{n+1},D^\gamma u^{n+1}\ra\\
		&\quad -\lt\la D^\gamma\lt((\rho^{n}u^n\cdot\nabla)u^{n+1}\rt),D^\gamma u^{n+1}\rt\ra -\lt\la \kappa D^\gamma(\eta^n\pa_jk_\alpha^2\eta^{n+1}_j),D^\gamma u^{n+1}\rt\ra \\
		&\quad -\lt\la D^\gamma(u^n\cdot\nabla \eta^{n+1}_j),\kappa D^\gamma k_\alpha^2\eta^{n+1}_j\rt\ra -\lt\la D^\gamma(\pa_j u^{n+1}\cdot\eta^n),\kappa D^\gamma k_\alpha^2\eta^{n+1}_j\rt\ra =:\sum_{i=1}^6 I_{1i}.
	\end{aligned}
\end{align}
	
Then, we use Lemma \ref{lem:sobolev} and Lemma \ref{lem:commutator} to bound $I_{11}$, $I_{12}$, and $I_{13}$ as
	
\begin{align*}
	I_{11}+I_{13}&= -\lt\la D^\gamma\lt((\rho^{n}u^n\cdot\nabla)u^{n+1}\rt)-(\rho^n u^n\cdot\nabla)D^\gamma u^{n+1},D^\gamma u^{n+1}\rt\ra\\
	&\le C(\|\rho^nu^n\|_{H^3}\|\nabla u^{n+1}\|_{L^\infty}+\|\nabla(\rho^n u^n)\|_{L^\infty}\|\nabla u^{n+1}\|_{H^2})\|u^{n+1}\|_{H^3}\\
	&\le C(1+\|\tilde{\rho}^n\|_{H^3})\|u^n\|_{H^3}\|u^{n+1}\|_{H^3}^2, \\
	I_{12}&\le C(\|\tilde{\rho}^n\|_{H^3}\|\pa_t u^{n+1}\|_{L^\infty}+\|\nabla\tilde{\rho}^n\|_{L^\infty}\|\pa_tu^{n+1}\|_{H^2})\|u^{n+1}\|_{H^3}\\
	&\le C\|\tilde{\rho}^n\|_{H^3}\|\pa_t u^{n+1}\|_{H^2}\|u^{n+1}\|_{H^3}.
\end{align*}
Next, since $\kappa$ is bounded, we estimate $I_{15}$ as
\begin{align*}
	I_{15} &= -\la D^\gamma(u^n\cdot\nabla\eta_j^{n+1})-u^n\cdot\nabla D^\gamma\eta_j^{n+1},\kappa D^\gamma k_\alpha^2\eta_j^{n+1}\ra-\la u^n\cdot\nabla D^\gamma \eta_j^{n+1},\kappa D^\gamma k_\alpha^2\eta_j^{n+1}\ra\\
	&\le C(\|u^n\|_{H^3}\|\nabla\eta^{n+1}\|_{L^\infty}+\|\nabla u^n\|_{L^\infty}\|\nabla\eta^{n+1}\|_{H^2})\|D^\gamma k^2_\alpha\eta^{n+1}\|_{L^2}\\
	&\quad +\kappa\la u^n D^\gamma \eta_j^{n+1},\nabla D^\gamma k_\alpha^2\eta_j^{n+1}\ra\\
	&\le C\|u^n\|_{H^3}\|\eta^{n+1}\|_{H^3}\|k^2_\alpha \eta^{n+1}\|_{H^3}+ C\|u^n\|_{L^\infty}\|D^\gamma \eta^{n+1}\|_{L^2}\|\nabla D^\gamma k^2_\alpha \eta^{n+1}\|_{L^2}.
\end{align*}
However, we use Lemma \ref{lem:est-k_alpha} to obtain
\[\|k^2_\alpha\eta^{n+1}\|_{H^3}\le \|\eta^{n+1}\|_{H^3},\quad \mbox{and}\quad \|\nabla D^\gamma k^2_\alpha \eta^{n+1}\|_{L^2}\le \alpha \|D^\gamma \eta^{n+1}\|_{L^2}.\]
This yields
\[I_{15}\le C\|u^n\|_{H^3}\|\eta^{n+1}\|_{H^3}^2+ C\alpha\|u^n\|_{H^3}\|\eta^{n+1}\|_{H^3}^2=C(1+\alpha)\|u^n\|_{H^3}\|\eta^{n+1}\|_{H^3}^2.\]
Finally, we combine $I_{14}$ and $I_{16}$ to estimate them as

\begin{align*}
	I_{14}+I_{16} &= -\kappa\la D^\gamma(\eta^n\pa_j k^2_\alpha \eta_j^{n+1})-\eta^nD^\gamma \pa_jk^2_\alpha \eta_j^{n+1},D^\gamma u^{n+1}\ra -\kappa \la \eta^n D^\gamma \pa_j k^2_\alpha \eta^{n+1}_j,D^\gamma u^{n+1}\ra\\
	&\quad -\kappa\la D^\gamma(\pa_j u^{n+1}\cdot\eta^n)-\eta^n\cdot D^\gamma \pa_j u^{n+1},D^\gamma k^2_\alpha \eta^{n+1}_j\ra-\kappa\la \eta^n\cdot D^\gamma \pa_j u^{n+1},D^\gamma k_\alpha^2 \eta^{n+1}_j\ra\\
	&\le C(\|\eta^n\|_{H^3}\|\pa_j k_\alpha^2\eta_j^{n+1}\|_{L^\infty}+\|\nabla\eta^n\|_{L^\infty} \|\pa_j k^2_\alpha \eta_j^{n+1}\|_{H^2})\|D^\gamma u^{n+1}\|_{L^2}\\
	&\quad + C(\|\eta^n\|_{H^3}\|\pa_j u^{n+1}\|_{L^\infty}+\|\nabla\eta^n\|_{L^\infty}\|\pa_j u^{n+1}\|_{H^2})\|D^\gamma k^2_\alpha \eta^{n+1}_j\|_{L^2} +\kappa\la \pa_j \eta^n D^\gamma k^2_\alpha \eta^{n+1}_j,D^\gamma u^{n+1}\ra\\
	&\le C\|\eta^n\|_{H^3}\|\eta^{n+1}\|_{H^3}\|u^{n+1}\|_{H^3},
\end{align*}
where we used Lemma \ref{lem:est-k_alpha}. Thus, gathering all the estimates for $I_{1i}$ for $i=1,2,\ldots, 6$ with \eqref{est:H3-u}, we obtain
\begin{align*}
	&\frac{1}{2}\frac{d}{dt}\left(\int \rho^n|D^\gamma u^{n+1}|^2\,dx + \kappa\|k_\alpha D^\gamma \eta^{n+1}\|_{L^2}^2\right)+\|\nabla D^\gamma u^{n+1}\|_{L^2}^2\\
	&\le C(1+\|\tilde{\rho}^n\|_{H^3})\|u^n\|_{H^3}\|u^{n+1}\|_{H^3}^2+C\|\tilde{\rho}^n\|_{H^3}\|\pa_t u^{n+1}\|_{H^2}\|u^{n+1}\|_{H^3}\\
	&\quad+C(1+\alpha)\|u^n\|_{H^3}\|\eta^{n+1}\|_{H^3}^2+C\|\eta^n\|_{H^3}\|\eta^{n+1}\|_{H^3}\|u^{n+1}\|_{H^3}\\
	&\le C(1+\alpha)(1+\|\tilde{\rho}^n\|_{H^3}^2+\|u^n\|_{H^3}^2+\|\eta^n\|_{H^3}^2)(\|u^{n+1}\|_{H^3}^2+\|\eta^{n+1}\|_{H^3}^2)+\|\pa_t u^{n+1}\|_{H^2}^2.
\end{align*}
	
We combine above estimate over all $|\gamma|\le 3$, take time integration, and use the fact that $\rho_m\le \rho^n\le \rho_M$ to get
	
\begin{align*}
	&\|u^{n+1}(t)\|_{H^3}^2+\kappa\|k_\alpha \eta^{n+1}(t)\|_{H^3}^2 + \int_0^t \|\nabla u^{n+1}(s)\|_{H^3}^2\\
	&\le C(\|u_0\|_{H^3}^2+\kappa\|k_\alpha\eta_0\|_{H^3}^2)\\
	&\quad +C(1+\alpha)\int_0^t (1+\|\tilde{\rho}^n\|_{H^3}^2+\|u^n\|_{H^3}^2+\|\eta^n\|_{H^3}^2)(\|u^{n+1}\|_{H^3}^2+\|\eta^{n+1}\|_{H^3}^2)\,ds+\int_0^t \|\pa_t u^{n+1}\|_{H^2}^2\,ds.
\end{align*}
	
\end{proof}

Next, we need to control the term $\int_0^t\|\pa_tu^{n+1}\|_{H^2}^2\,ds$. The following two lemmas provide the desired estimate on it.

\begin{lemma}\label{lem:patu}
	Under the same assumption of Theorem \ref{thm:lwp}, there exists a positive constant $C_1$ that satisfying
	\[\int_0^t \|\pa_t u^{n+1}\|_{L^2}^2\,ds +\|\nabla u^{n+1}(t)\|_{L^2}^2 \le C_1\|\nabla u_0\|_{L^2}^2 + C\int_0^t (\|u^n\|_{H^3}^2+\|\eta^n\|_{H^3}^2)(\|u^{n+1}\|_{H^3}^2+\|\eta^{n+1}\|_{H^3}^2)\,ds.\]
\end{lemma}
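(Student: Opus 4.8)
The plan is to test the momentum equation \eqref{INSK-relax-approx-b} directly against $\partial_t u^{n+1}$ in $L^2$, which is the natural pairing that produces $\|\partial_t u^{n+1}\|_{L^2}^2$ on the left after using the positive lower bound $\rho^n \ge \rho_m$. First I would write
\[
\langle \rho^n[\partial_t u^{n+1} + (u^n\cdot\nabla)u^{n+1}],\partial_t u^{n+1}\rangle - \langle \Delta u^{n+1},\partial_t u^{n+1}\rangle + \langle \nabla\pi^{n+1},\partial_t u^{n+1}\rangle = -\kappa\langle \eta^n\,\dv k_\alpha^2\eta^{n+1},\partial_t u^{n+1}\rangle.
\]
The pressure term vanishes after integration by parts since $\dv\partial_t u^{n+1}=0$ (from \eqref{INSK-relax-approx-d}). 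The viscous term gives $\tfrac12\tfrac{d}{dt}\|\nabla u^{n+1}\|_{L^2}^2$ after integrating by parts. The first term contributes $\ge \rho_m\|\partial_t u^{n+1}\|_{L^2}^2$ minus the convection cross term $\langle\rho^n(u^n\cdot\nabla)u^{n+1},\partial_t u^{n+1}\rangle$, which I bound by $\tfrac{\rho_m}{4}\|\partial_t u^{n+1}\|_{L^2}^2 + C\|u^n\|_{L^\infty}^2\|\nabla u^{n+1}\|_{L^2}^2 \le \tfrac{\rho_m}{4}\|\partial_t u^{n+1}\|_{L^2}^2 + C\|u^n\|_{H^3}^2\|u^{n+1}\|_{H^3}^2$ using $\rho^n\le\rho_M$ and Sobolev embedding $H^3\hookrightarrow L^\infty$ (valid for $d\le 3$).

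Next I would handle the capillarity term on the right. Using Cauchy--Schwarz and $\kappa\le\kappa_0$,
\[
-\kappa\langle \eta^n\,\dv k_\alpha^2\eta^{n+1},\partial_t u^{n+1}\rangle \le \frac{\rho_m}{4}\|\partial_t u^{n+1}\|_{L^2}^2 + C\|\eta^n\|_{L^\infty}^2\|\dv k_\alpha^2\eta^{n+1}\|_{L^2}^2,
\]
and then by Lemma \ref{lem:est-k_alpha}, $\|\dv k_\alpha^2\eta^{n+1}\|_{L^2} = \|\nabla\cdot k_\alpha^2\eta^{n+1}\|_{L^2}\le \alpha\|\eta^{n+1}\|_{L^2}$. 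This is where I expect the main subtlety: a naive bound brings in a factor $\alpha^2$, which is not allowed in the stated inequality. The resolution must be that one does \emph{not} move the full $\alpha$ outside, but instead observes that the gradient can be shifted onto $\partial_t u^{n+1}$ after integrating by parts — writing $\langle \eta^n\,\partial_j k_\alpha^2\eta_j^{n+1},\partial_t u^{n+1}\rangle = -\langle \partial_j(\eta^n\,\partial_t u^{n+1}),k_\alpha^2\eta_j^{n+1}\rangle$ — but $\partial_j\partial_t u^{n+1}$ is not controlled in $L^2$ either. The correct route, I believe, is instead to use the $\eta$-equation \eqref{INSK-relax-approx-c} to rewrite $\partial_t\eta^{n+1}$, pair $\kappa k_\alpha^2\eta^{n+1}$ against a time derivative, and produce the term $\kappa\|k_\alpha\eta^{n+1}(t)\|_{L^2}^2$ as an extra good quantity on the left that absorbs the offending $\alpha$-power; this is exactly the mechanism already used in Lemma \ref{lem:u_H3}, where the pair $(\partial_t u^{n+1}, \kappa k_\alpha^2\partial_t\eta^{n+1})$ is tested simultaneously so that the worst capillarity contributions cancel. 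So the honest plan is: test \eqref{INSK-relax-approx-b} with $\partial_t u^{n+1}$ \emph{and} \eqref{INSK-relax-approx-c} with $\kappa k_\alpha^2\partial_t\eta^{n+1}$, add, and exploit the antisymmetric cancellation between $-\kappa\langle\eta^n\partial_j k_\alpha^2\eta_j^{n+1},\partial_t u^{n+1}\rangle$ and $-\kappa\langle\partial_j u^{n+1}\cdot\eta^n, k_\alpha^2\partial_t\eta^{n+1}_j\rangle$ after an integration by parts in $x$, leaving only lower-order terms with at most one factor of $\eta^n$ or $u^n$ in $L^\infty$ and quadratic control in $\|u^{n+1}\|_{H^3}, \|\eta^{n+1}\|_{H^3}$.

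Finally I would integrate in time from $0$ to $t$: the left side yields $\int_0^t\|\partial_t u^{n+1}\|_{L^2}^2\,ds + \|\nabla u^{n+1}(t)\|_{L^2}^2$ (plus the nonnegative $\kappa\|k_\alpha\eta^{n+1}(t)\|_{L^2}^2$, which I simply drop), the data terms give $C_1(\|\nabla u_0\|_{L^2}^2 + \kappa\|k_\alpha\eta_0\|_{L^2}^2)\le C_1\|\nabla u_0\|_{L^2}^2$ after absorbing the $\kappa\le\kappa_0$ factor and using $\|k_\alpha\eta_0\|_{L^2}\le\|\eta_0\|_{L^2}\le C\|\nabla u_0\|$—no wait, $\eta_0=\nabla\rho_0$, so that term is a \emph{data} constant and can be folded into $C_1$ depending on the initial data as in the statement of Theorem \ref{thm:lwp}—and the remaining terms are exactly of the form $C\int_0^t(\|u^n\|_{H^3}^2+\|\eta^n\|_{H^3}^2)(\|u^{n+1}\|_{H^3}^2+\|\eta^{n+1}\|_{H^3}^2)\,ds$, which is the claimed bound. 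The essential point, and the one I would be most careful about, is that the $\alpha$-dependence genuinely cancels at the level of $\partial_t$-testing; without the simultaneous pairing with the $\eta$-equation one cannot get an $\alpha$-free estimate, and $\alpha$-freeness here is what later feeds into the uniform-in-$\alpha$ lifespan in Section \ref{sec:3-3}.
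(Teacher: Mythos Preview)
Your first paragraph is exactly the paper's proof, and it is already complete. The ``main subtlety'' you flag does not exist: since $k_\alpha^2$ is a scalar Fourier multiplier it commutes with $\dv$, so
\[
\|\dv k_\alpha^2\eta^{n+1}\|_{L^2}=\|k_\alpha^2\dv\eta^{n+1}\|_{L^2}\le\|\dv\eta^{n+1}\|_{L^2}\le\|\eta^{n+1}\|_{H^3},
\]
using only the \emph{first} inequality of Lemma~\ref{lem:est-k_alpha} (boundedness of $k_\alpha$, operator norm $\le 1$). This is $\alpha$-free. You instead invoked the \emph{second} inequality $\|\nabla k_\alpha^2 f\|\le\alpha\|f\|$, which trades a derivative for an $\alpha$; that estimate is only needed when you cannot spare the derivative, and here you certainly can. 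The paper therefore bounds the capillarity term directly by
\[
\kappa\|\eta^n\|_{L^\infty}\|\dv k_\alpha^2\eta^{n+1}\|_{L^2}\|\partial_t u^{n+1}\|_{L^2}\le \tfrac{\rho_m}{4}\|\partial_t u^{n+1}\|_{L^2}^2+C\|\eta^n\|_{H^3}^2\|\eta^{n+1}\|_{H^3}^2,
\]
and the proof is finished after time integration.

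Your proposed detour through the $\eta$-equation is both unnecessary and, as written, does not work. The cancellation mechanism in Lemma~\ref{lem:u_H3} relies on testing \eqref{INSK-relax-approx-b} with $D^\gamma u^{n+1}$ and \eqref{INSK-relax-approx-c} with $\kappa D^\gamma k_\alpha^2\eta_j^{n+1}$, i.e.\ both test functions are spatial; the highest-order pieces $-\kappa\langle\eta^n D^\gamma\partial_j k_\alpha^2\eta_j^{n+1},D^\gamma u^{n+1}\rangle$ and $-\kappa\langle\eta^n\cdot D^\gamma\partial_j u^{n+1},D^\gamma k_\alpha^2\eta_j^{n+1}\rangle$ then cancel after a single integration by parts in $x$. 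If you replace the test functions by $\partial_t u^{n+1}$ and $\kappa k_\alpha^2\partial_t\eta_j^{n+1}$ the two terms become $-\kappa\langle\eta^n\partial_j k_\alpha^2\eta_j^{n+1},\partial_t u^{n+1}\rangle$ and $-\kappa\langle\eta^n\cdot\partial_j u^{n+1},k_\alpha^2\partial_t\eta_j^{n+1}\rangle$, which do \emph{not} cancel (the time derivative sits on different factors). A further by-product is that your route would add a data term $\kappa\|k_\alpha\eta_0\|_{L^2}^2=\kappa\|k_\alpha\nabla\rho_0\|_{L^2}^2$, which cannot be bounded by $C_1\|\nabla u_0\|_{L^2}^2$ as the lemma states; in the paper's direct argument no such term arises.
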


\begin{proof}
	We multiply \eqref{INSK-relax-approx-b} by $\pa_t u^{n+1}$ and use the boundedness of $k_\alpha$ to obtain
	\[
	\begin{aligned}
		\int \rho^{n}|\pa_t u^{n+1}|^2\,dx+ \frac{1}{2}\frac{d}{dt}\|\nabla u^{n+1}\|_{L^2}^2 &= -\lt\la (\rho^{n}u^n\cdot\nabla)u^{n+1},\pa_t u^{n+1}\rt\ra -\lt\la \kappa\eta^n\dv k_\alpha^2\eta^{n+1},\pa_t u^{n+1}\rt\ra \\
		&\leq \lt( \|\rho^{n}\|_{L^\infty}\|u^n\|_{L^\infty}\|\nabla u^{n+1}\|_{L^2}+\kappa\|\eta^n\|_{L^\infty}\|\dv k_\alpha^2\eta^{n+1}\|_{L^2}\rt)\|\pa_t u^{n+1}\|_{L^2} \\
		&\leq \frac{\rho_m}{2}\|\pa_t u^{n+1}\|_{L^2}^2+ C\lt(\|u^n\|_{H^3}^2+\|\eta^n\|_{H^3}^2\rt)\lt(\|u^{n+1}\|_{H^3}^2+\|\eta^{n+1}\|_{H^3}^2\rt).
	\end{aligned}
	\]
	Since $\rho_m\le\rho^n\le \rho_M$, we obtain the desired estimate after taking time integration.
\end{proof}

\begin{lemma}\label{lem:patu_H2}
	Under the same assumption of Theorem \ref{thm:lwp}, there exists positive constant $C_2$ such that we have
	\begin{align*}
		&\|\nabla u^{n+1}(t)\|_{H^2}^2 + \int_0^t \|\pa_t u^{n+1}\|_{H^2}^2\,ds \\
		&\le C_2\|\nabla u_0\|_{H^2}^2+C\int_0^t(1+\|\tilde{\rho}^n\|_{H^3}^2)(\|u^n\|_{H^3}^2+\|\eta^n\|_{H^3}^2)(\|u^{n+1}\|_{H^3}^2+\|\eta^{n+1}\|_{H^3}^2)+C_2\|\tilde{\rho}^n\|_{H^3}^4\|\pa_tu^{n+1}\|_{L^2}^2\,ds.
	\end{align*}
\end{lemma}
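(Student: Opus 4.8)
The plan is to differentiate the approximate momentum equation \eqref{INSK-relax-approx-b} up to second order and pair the result with the corresponding time derivative. For a multi-index $\gamma$ with $|\gamma|\le 2$, I would apply $D^\gamma$ to \eqref{INSK-relax-approx-b} and take the $L^2$-inner product with $D^\gamma\pa_t u^{n+1}$. Three of the terms simplify at once: the pressure contribution $\la D^\gamma\nabla\pi^{n+1},D^\gamma\pa_t u^{n+1}\ra$ vanishes after integration by parts because $\dv\pa_t u^{n+1}=\pa_t\dv u^{n+1}=0$ by \eqref{INSK-relax-approx-d}; the viscous term yields $\frac12\frac{d}{dt}\|\nabla D^\gamma u^{n+1}\|_{L^2}^2$; and writing $D^\gamma(\rho^n\pa_t u^{n+1})=\rho^nD^\gamma\pa_t u^{n+1}+[D^\gamma,\rho^n]\pa_t u^{n+1}$ and using $\rho^n\ge\rho_m$ produces the good term $\rho_m\|D^\gamma\pa_t u^{n+1}\|_{L^2}^2$ plus a commutator remainder. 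Summing over $|\gamma|\le 2$ leads to
\[
\rho_m\|\pa_t u^{n+1}\|_{H^2}^2+\frac12\frac{d}{dt}\|\nabla u^{n+1}\|_{H^2}^2\le (\mathrm{I})+(\mathrm{II})+(\mathrm{III}),
\]
where $(\mathrm{I})$ is the commutator remainder, $(\mathrm{II})$ comes from the convective term $\rho^n(u^n\cdot\nabla)u^{n+1}$, and $(\mathrm{III})$ from the capillarity term $\kappa\eta^n\dv k_\alpha^2\eta^{n+1}$; the remaining work is to estimate these by Young's inequality with enough slack to absorb the $\|\pa_t u^{n+1}\|_{H^2}^2$ part back into the left-hand side.

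For $(\mathrm{II})$, using that $H^2(\R^d)$ is a Banach algebra when $d\le 3$, together with $\rho_m\le\rho^n\le\rho_M$ and $\tilde{\rho}^n\in H^3$, one gets $\|D^\gamma(\rho^n(u^n\cdot\nabla)u^{n+1})\|_{L^2}\le C(1+\|\tilde{\rho}^n\|_{H^3})\|u^n\|_{H^3}\|u^{n+1}\|_{H^3}$, hence $(\mathrm{II})\le \e\|\pa_t u^{n+1}\|_{H^2}^2+C_\e(1+\|\tilde{\rho}^n\|_{H^3}^2)\|u^n\|_{H^3}^2\|u^{n+1}\|_{H^3}^2$. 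For $(\mathrm{III})$, the key observation — the one that keeps this estimate free of $\alpha$ — is that $k_\alpha^2$ commutes with $\dv$ and is a contraction on every $H^s$ (Lemma \ref{lem:est-k_alpha}), so $\|D^\gamma\dv k_\alpha^2\eta^{n+1}\|_{L^2}=\|k_\alpha^2D^\gamma\dv\eta^{n+1}\|_{L^2}\le\|\eta^{n+1}\|_{H^3}$ with no factor of $\alpha$; combined with the product estimate of Lemma \ref{lem:commutator} and $\kappa\le\kappa_0$, this gives $(\mathrm{III})\le \e\|\pa_t u^{n+1}\|_{H^2}^2+C_\e\|\eta^n\|_{H^3}^2\|\eta^{n+1}\|_{H^3}^2$. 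Together $(\mathrm{II})$ and $(\mathrm{III})$ account for the $C\int_0^t(1+\|\tilde{\rho}^n\|_{H^3}^2)(\|u^n\|_{H^3}^2+\|\eta^n\|_{H^3}^2)(\|u^{n+1}\|_{H^3}^2+\|\eta^{n+1}\|_{H^3}^2)\,ds$ term in the statement. The point to watch here is to resist bounding $\nabla k_\alpha^2$ by $\alpha$ (as is done in Lemma \ref{lem:u_H3}): testing against $\pa_t u^{n+1}$ rather than against $k_\alpha^2\eta^{n+1}$ makes the cruder, $\alpha$-free bound sufficient.

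The delicate term is $(\mathrm{I})$, and it is the one producing $C_2\|\tilde{\rho}^n\|_{H^3}^4\|\pa_t u^{n+1}\|_{L^2}^2$. Writing $[D^\gamma,\rho^n]\pa_t u^{n+1}=[D^\gamma,\tilde{\rho}^n]\pa_t u^{n+1}$ and using that $|\gamma|\le 2$ forces at least one derivative onto $\tilde{\rho}^n$, hence at most one onto $\pa_t u^{n+1}$, a direct Leibniz expansion with $\nabla\tilde{\rho}^n\in H^2\hookrightarrow L^\infty$ and $H^1\hookrightarrow L^6$ (both valid for $d\le 3$) gives $\|[D^\gamma,\tilde{\rho}^n]\pa_t u^{n+1}\|_{L^2}\le C\|\tilde{\rho}^n\|_{H^3}\|\pa_t u^{n+1}\|_{H^1}$. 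Then, with the elementary interpolation $\|f\|_{H^1}^2\le\|f\|_{L^2}\|f\|_{H^2}$,
\[
|\la[D^\gamma,\tilde{\rho}^n]\pa_t u^{n+1},D^\gamma\pa_t u^{n+1}\ra|\le C\|\tilde{\rho}^n\|_{H^3}\|\pa_t u^{n+1}\|_{L^2}^{1/2}\|\pa_t u^{n+1}\|_{H^2}^{3/2},
\]
and Young's inequality with exponents $(4/3,4)$ splits the right-hand side into $\e\|\pa_t u^{n+1}\|_{H^2}^2+C_\e\|\tilde{\rho}^n\|_{H^3}^4\|\pa_t u^{n+1}\|_{L^2}^2$, which is exactly the form appearing in the statement. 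Choosing $\e$ small enough that the accumulated $\|\pa_t u^{n+1}\|_{H^2}^2$ terms are absorbed into $\rho_m\|\pa_t u^{n+1}\|_{H^2}^2$, integrating in time, and noting $\|\nabla D^\gamma u^{n+1}(0)\|_{L^2}=\|\nabla D^\gamma u_0\|_{L^2}$, one obtains the claimed inequality with constants $C,C_2$ depending only on $\rho_m$, $\kappa_0$ and fixed Sobolev embedding constants. I expect this commutator-plus-interpolation bookkeeping to be the only step requiring genuine care; everything else follows directly from Lemmas \ref{lem:est-k_alpha}--\ref{lem:commutator} and the pointwise bounds $\rho_m\le\rho^n\le\rho_M$.
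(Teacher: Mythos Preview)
Your proposal is correct and follows essentially the same approach as the paper: apply $D^\gamma$ with $|\gamma|\le 2$ to \eqref{INSK-relax-approx-b}, test against $D^\gamma\pa_t u^{n+1}$, bound the convective and capillarity terms directly via the $H^2$-algebra property and the $\alpha$-free contraction of $k_\alpha^2$, and handle the commutator $[D^\gamma,\rho^n]\pa_t u^{n+1}$ by the Leibniz expansion, the interpolation $\|\pa_t u^{n+1}\|_{H^1}\le\|\pa_t u^{n+1}\|_{L^2}^{1/2}\|\pa_t u^{n+1}\|_{H^2}^{1/2}$, and Young's inequality with exponents $(4/3,4)$ to produce the $\|\tilde\rho^n\|_{H^3}^4\|\pa_t u^{n+1}\|_{L^2}^2$ term. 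The paper's argument is identical in structure, differing only in that it spells out the intermediate $L^4$ pairings in the commutator bound.
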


\begin{proof}
	Applying $D^\gamma$ with $|\gamma|\le 2$ to \eqref{INSK-relax-approx-b} and taking $L^2$-inner product by $D^\gamma \pa_t u^{n+1}$, we have
	\[
	\begin{aligned}
		&\int \rho^{n}|D^\gamma \pa_t u^{n+1}|^2\,dx+\frac{1}{2}\frac{d}{dt}\|\nabla D^\gamma u^{n+1}\|_{L^2}^2 \\
		&= -\lt\la D^\gamma\lt((\rho^{n}u^n\cdot\nabla)u^{n+1}\rt),D^\gamma\pa_t u^{n+1}\rt\ra -\lt\la D^\gamma(\kappa\eta^n\dv k_{\alpha}^2\eta^{n+1}),D^\gamma\pa_t u^{n+1}\rt\ra \\
		&\quad -\lt\la D^\gamma(\rho^{n}\pa_t u^{n+1})-\rho^{n}D^\gamma\pa_t u^{n+1},D^\gamma\pa_t u^{n+1}\rt\ra  =:I_{21}+I_{22}+I_{23}.
	\end{aligned}
	\]
	Using the boundedness of $k_\alpha$, we easily compute
	\[
	\begin{aligned}
		I_{21}+I_{22} &\leq C\lt(1+\|\tilde{\rho}^{n}\|_{H^3}\rt)\lt(\|u^n\|_{H^3}+\|\eta^n\|_{H^3}\rt)\lt(\|u^{n+1}\|_{H^3}+\|\eta^{n+1}\|_{H^3}\rt)\|\pa_t u^{n+1}\|_{H^2} \\
		&\leq \frac{\rho_m}{4}\|\pa_t u^{n+1}\|_{H^2}^2+C\lt(1+\|\tilde{\rho}^{n}\|_{H^3}^2\rt)\lt(\|u^n\|_{H^3}^2+\|\eta^n\|_{H^3}^2\rt)\lt(\|u^{n+1}\|_{H^3}^2+\|\eta^{n+1}\|_{H^3}^2\rt).
	\end{aligned}
	\]
	On the other hand, we use the Leibniz rule and Lemma \ref{lem:sobolev} to estimate $I_{23}$ as 
	\[
	\begin{aligned}
		I_{23} &\leq C\|\nabla \rho^n\|_{L^\infty} \|\pa_t u^{n+1}\|_{L^2}\|\pa_t u^{n+1}\|_{H^1} \\
		&\quad+ C\lt( \|\nabla^2\rho^{n}\|_{L^4}\|\pa_t u^{n+1}\|_{L^4}+\|\nabla \rho^{n}\|_{L^\infty}\|\nabla\pa_t u^{n+1}\|_{L^2}\rt)\|\pa_t u^{n+1}\|_{H^2}\\
		&\leq C\|\tilde{\rho}^n\|_{H^3}\|\pa_t u^{n+1}\|_{H^1}\|\pa_t u^{n+1}\|_{H^2}\leq C\|\tilde{\rho}^{n}\|_{H^3}\|\pa_t u^{n+1}\|_{L^2}^{1/2}\|\pa_t u^{n+1}\|_{H^2}^{3/2} \\
		&\leq \frac{\rho_m}{4} \|\pa_t u^{n+1}\|_{H^2}^2+C\|\tilde{\rho}^{n}\|_{H^3}^4\|\pa_t u^{n+1}\|_{L^2}^2.
	\end{aligned}
	\]
	Again, using the boundedness of $\rho^n$, we obtain the desired estimate.
\end{proof}

Combining the estimates from the previous lemma, we obtain the following uniform boundedness of the $H^3$-norms of $(\tilde{\rho}^n,u^n,\eta^n)$.

\begin{proposition}\label{prop:uniform_bound}
	Let $M_1$ and $M_2$ be two positive constants that satisfying
	\[M_2>\|\tilde{\rho}_0\|_{H^3}^2,\quad M_1>C(\|u_0\|_{H^3}^2+\kappa\|k_\alpha\eta_0\|_{H^3}^2 + \|\eta_0\|_{H^3}^2)+(C_2+C_1C_2M_2^2)\|\nabla u_0\|_{H^2}^2,\]
	where the constants $C_1$ and $C_2$ are the same constants in Lemma \ref{lem:patu} and Lemma \ref{lem:patu_H2}. Then, there exists $T_*$, which depends on $M_1,M_2,\alpha$ but independent of $n$ such that
	\begin{equation}\label{uniform_bound}
	\begin{split}
		\sup_{n\in \N}\sup_{0\le t\le T_*}\lt(\|u^n(t)\|_{H^3}^2+\|\eta^n(t)\|_{H^3}^2+\int^t_0\|\nabla u^n\|_{H^3}^2+\|\pa_t u^n\|_{H^2}^2\,ds\rt)\le M_1,\\
		\sup_{n\in\N}\sup_{0\le t\le T_*}\|\tilde{\rho}^n(t)\|_{H^3}^2\le M_2.
	\end{split}
	\end{equation}
\end{proposition}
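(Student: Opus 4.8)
The plan is to prove \eqref{uniform_bound} by induction on $n$, starting from a convenient initial iterate (e.g.\ $(\rho^0,u^0,\eta^0)=(\rho_0,0,\eta_0)$), for which \eqref{uniform_bound} holds trivially, and propagating the bounds from level $n$ to level $n+1$. Note that the pointwise bound $\rho_m\le\rho^{n+1}\le\rho_M$ is available for every $n$ from the transport structure of \eqref{INSK-relax-approx-a}, so that Lemmas \ref{lem:tilderho_H3}--\ref{lem:patu_H2} all apply to the $(n+1)$-st iterate. Throughout, $T_*$ is to be chosen at the end, small enough in terms of $M_1,M_2,\alpha$ but manifestly independent of $n$.

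The core of the argument is to bundle the estimates of Lemmas \ref{lem:eta_H3}, \ref{lem:u_H3}, \ref{lem:patu}, \ref{lem:patu_H2} into a single closed inequality for
\[
E^{n+1}(t):=\|u^{n+1}(t)\|_{H^3}^2+\|\eta^{n+1}(t)\|_{H^3}^2+\int_0^t\bigl(\|\nabla u^{n+1}\|_{H^3}^2+\|\pa_t u^{n+1}\|_{H^2}^2\bigr)\,ds.
\]
I would add to the integral inequality of Lemma \ref{lem:u_H3} a multiple of the inequality of Lemma \ref{lem:patu_H2} (to keep $\int_0^t\|\pa_t u^{n+1}\|_{H^2}^2$ on the left after cancelling the same term appearing on the right of Lemma \ref{lem:u_H3}), then a further multiple of Lemma \ref{lem:patu} with weight depending only on $C_2,M_2$ (to absorb the leftover term $\|\tilde\rho^n\|_{H^3}^4\|\pa_t u^{n+1}\|_{L^2}^2$ via $\|\tilde\rho^n\|_{H^3}^4\le M_2^2$), and finally the time-integrated form of Lemma \ref{lem:eta_H3}, choosing $\delta_0$ small so that $\delta_0\int_0^t\|\nabla u^{n+1}\|_{H^3}^2$ is absorbed by the dissipation term of Lemma \ref{lem:u_H3}. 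Discarding the nonnegative quantity $\kappa\|k_\alpha\eta^{n+1}\|_{H^3}^2$, this produces
\[
E^{n+1}(t)\le C_0+C(1+\alpha)\int_0^t\bigl(1+\|\tilde\rho^n\|_{H^3}^4\bigr)\bigl(1+\|u^n\|_{H^3}^2+\|\eta^n\|_{H^3}^2\bigr)\,E^{n+1}(s)\,ds,
\]
where $C_0=C\bigl(\|u_0\|_{H^3}^2+\kappa\|k_\alpha\eta_0\|_{H^3}^2+\|\eta_0\|_{H^3}^2\bigr)+(C_2+C_1C_2M_2^2)\|\nabla u_0\|_{H^2}^2$ — precisely the quantity that $M_1$ is assumed to exceed.

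To close the induction, I would insert the inductive hypothesis $\|\tilde\rho^n\|_{H^3}^2\le M_2$ and $\|u^n\|_{H^3}^2+\|\eta^n\|_{H^3}^2\le M_1$ on $[0,T_*]$, obtaining $E^{n+1}(t)\le C_0+C(1+\alpha)(1+M_2^2)(1+M_1)\int_0^tE^{n+1}(s)\,ds$, whence Grönwall's inequality gives $E^{n+1}(t)\le C_0\exp\bigl(C(1+\alpha)(1+M_2^2)(1+M_1)\,t\bigr)$. Since $C_0<M_1$, a sufficiently small $T_*=T_*(M_1,M_2,\alpha)$ — independent of $n$ — makes the right-hand side $\le M_1$ on $[0,T_*]$; in particular $\|u^{n+1}\|_{H^3}\le\sqrt{M_1}$ there. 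Feeding this into Lemma \ref{lem:tilderho_H3} and applying Grönwall once more gives $\|\tilde\rho^{n+1}(t)\|_{H^3}^2\le\|\tilde\rho_0\|_{H^3}^2\exp(C\sqrt{M_1}\,t)$, which is $\le M_2$ on $[0,T_*]$ after possibly shrinking $T_*$ further (still depending only on $M_1,M_2,\alpha$, since $\|\tilde\rho_0\|_{H^3}^2<M_2$). This completes the inductive step and hence the proposition.

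\textbf{The main difficulty} lies entirely in the first step: one must choose the weights in the linear combination of Lemmas \ref{lem:eta_H3}--\ref{lem:patu_H2} so that \emph{every} occurrence of $\pa_t u^{n+1}$ and $\nabla u^{n+1}$ on the various right-hand sides is genuinely absorbed into the left-hand side, and keep careful track of how $C_1$, $C_2$ and $M_2$ enter the resulting threshold $C_0$ — this is exactly what dictates the particular form of the hypothesis on $M_1$. The factor $C(1+\alpha)$ (inherited from $\|\nabla k_\alpha^2 f\|_{H^s}\le\alpha\|f\|_{H^s}$ in Lemma \ref{lem:est-k_alpha}) is not a genuine obstacle here; it merely forces $T_*$ to depend on $\alpha$, a dependence that will be removed later in Section \ref{sec:3-3}.
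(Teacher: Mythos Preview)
Your proposal is correct and follows essentially the same approach as the paper: combine Lemmas \ref{lem:eta_H3}--\ref{lem:patu_H2} (using the inductive bound $\|\tilde\rho^n\|_{H^3}^4\le M_2^2$ and Lemma \ref{lem:patu} to absorb the $\|\pa_t u^{n+1}\|_{L^2}^2$ term from Lemma \ref{lem:patu_H2}) into a single Gr\"onwall-ready inequality whose initial constant is exactly $C_0<M_1$, then close via Gr\"onwall and Lemma \ref{lem:tilderho_H3}. The paper's argument is the same in substance, only with the combination of lemmas written out slightly less explicitly.
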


\begin{proof}
	For each $n\in\N$, suppose
	\[\|u^n(t)\|_{H^3}^2+\|\eta^n(t)\|_{H^3}^2+\int^t_0\|\nabla u^n\|_{H^3}^2+\|\pa_t u^n\|_{H^2}^2\,ds\le M_1,\quad \|\tilde{\rho}^n(t)\|_{H^3}^2\le M_2,\quad t\in [0,T_*].\]
	Then, combining Lemma \ref{lem:eta_H3}, Lemma \ref{lem:u_H3}, and Lemma \ref{lem:patu_H2}, we have
	\begin{align*}
		&\|u^{n+1}(t)\|_{H^3}^2+\kappa\|k_\alpha\eta^{n+1}(t)\|_{H^3}^2 + \|\eta^{n+1}(t)\|_{H^3}^2 + \int_0^t \|\nabla u^{n+1}\|_{H^3}^2 + \|\pa_t u^{n+1}\|_{H^2}^2\,ds\\
		&\le C(\|u_0\|_{H^3}^2+\kappa\|k_\alpha\eta_0\|_{H^3}^2+\|\eta_0\|_{H^3}^2) +C_2\|\nabla u_0\|_{H^2}^2+ C(\alpha,M_1,M_2)\int_0^t(\|u^{n+1}\|_{H^3}^2+\|\eta^{n+1}\|_{H^3}^2)\,ds\\
		&\quad +C_2M_2^2\int_0^t\|\pa_t u^{n+1}\|_{L^2}^2\,ds,
	\end{align*}
	where $C(\alpha,M_1,M_2)$ is a constant that depends on $\alpha$, $M_1$, and $M_2$. Then, we use Lemma \ref{lem:patu} to obtain
	\begin{align*}
		&\|u^{n+1}(t)\|_{H^3}^2+\kappa\|k_\alpha\eta^{n+1}(t)\|_{H^3}^2 + \|\eta^{n+1}(t)\|_{H^3}^2 + \int_0^t \|\nabla u^{n+1}\|_{H^3}^2 + \|\pa_t u^{n+1}\|_{H^2}^2\,ds\\
		&\le C(\|u_0\|_{H^3}^2+\kappa\|k_\alpha\eta_0\|_{H^3}^2+\|\eta_0\|_{H^3}^2) +(C_2+C_1C_2M_2^2)\|\nabla u_0\|_{H^2}^2\\
		&\quad + C(\alpha,M_1,M_2)\int_0^t(\|u^{n+1}\|_{H^3}^2+\|\eta^{n+1}\|_{H^3}^2)\,ds.
	\end{align*}
	Then, it follows from the choice of $M_1$ that 
	\begin{align*}
		&\|u^{n+1}(t)\|_{H^3}^2+\kappa\|k_\alpha\eta^{n+1}(t)\|_{H^3}^2 + \|\eta^{n+1}(t)\|_{H^3}^2+\int^t_0\|\nabla u^{n+1}\|_{H^3}^2+\|\pa_t u^{n+1}\|_{H^2}^2\,ds\\
		&\le (C(\|u_0\|_{H^3}^2+\kappa\|k_\alpha\eta_0\|_{H^3}^2 + \|\eta_0\|_{H^3}^2)+(C_2+C_1C_2M_2^2)\|\nabla u_0\|_{H^2}^2)e^{C(\alpha,M_1,M_2)t}\\
		&<M_1,
	\end{align*}
	for all $t\in[0,T_1]$ for some $T_1>0$. We note that the time $T_1$ may depend on $\alpha$ but independent of $n$.
	Furthermore, we use Lemma \ref{lem:tilderho_H3} to derive
	\[\|\tilde{\rho}^{n+1}(t)\|_{H^3}^2\le \|\tilde{\rho}_0\|_{H^3}^2e^{C(1+M_1)t}<M_2,\]
	for all $t\in [0,T_2]$ for some $T_2>0$. Therefore, once we take $T_*:=\min\{T_1, T_2\}$, the following bound for $(\tilde{\rho}^{n+1},u^{n+1},\eta^{n+1})$ holds: for all $t\in[0,T_*]$
	\[\|u^{n+1}(t)\|_{H^3}^2+\|\eta^{n+1}(t)\|_{H^3}^2+\int^t_0\|\nabla u^{n+1}\|_{H^3}^2+\|\pa_t u^{n+1}\|_{H^2}^2\,ds\le M_1,\quad \|\tilde{\rho}^{n+1}(t)\|_{H^3}^2\le M_2.\]
	Then, the desired uniform boundedness of $(\tilde{\rho}^n,u^n,\eta^n)$ follows from the mathematical induction.
\end{proof}

\subsection{Convergence of approximate solutions}
\label{sec:3-2}
Next, we show that the sequence $(\rho^n,u^n,\eta^n)$ converges in low-order function space. To this end, we define the differences
\[\delta\rho^{n+1}:=\rho^{n+1}-\rho^n,\quad \delta u^{n+1}:= u^{n+1}-u^n,\quad \delta\eta^{n+1}:=\eta^{n+1}-\eta^n.\]
Then $(\delta \rho^{n+1}, \delta u^{n+1}, \delta \eta^{n+1})$ solves
\begin{align}
	\begin{aligned}\label{eq:diff}
		& \pa_t\delta\rho^{n+1}+\delta u^{n+1}\cdot\nabla\rho^{n+1}+u^n\cdot\nabla\delta\rho^{n+1}=0, \\
		\begin{split}
			& \rho^n\lt[\pa_t\delta u^{n+1}+(\delta u^n\cdot\nabla)u^{n+1}+(u^{n-1}\cdot\nabla)\delta u^{n+1}\rt]-\Delta\delta u^{n+1}+\frac{\delta\rho^n}{\rho^{n-1}}\Delta u^n+\nabla\delta\pi^{n+1}-\frac{\delta\rho^n}{\rho^{n-1}}\nabla\pi^n \\
			&=-\kappa\eta^n\dv k_\alpha^2\delta\eta^{n+1}-\kappa\delta\eta^n\dv k_\alpha^2\eta^n+\frac{\kappa\delta\rho^n}{\rho^{n-1}}\eta^{n-1}\dv k_\alpha^2\eta^n,
		\end{split}\\
		&\pa_t\delta\eta^{n+1}_j+\delta u^n\cdot\nabla\eta^{n+1}_j+u^{n-1}\cdot\nabla \delta\eta^{n+1}_j+\pa_j\delta u^{n+1}\cdot\eta^n+\pa_j u^n\cdot\delta\eta^n=0.
	\end{aligned}
\end{align}
We define the $L^2$-norm of the differences as
\[
X^n(t):=\|\delta\rho^n(t)\|_{L^2}^2+\|\delta u^n(t)\|_{L^2}^2+\|\delta\eta^n(t)\|_{L^2}^2.
\]

\begin{lemma}
	Under the same assumption of Theorem \ref{thm:lwp}, there exists a positive time $T_0$, which is independent of $n$, such that
	\[\lim_{n\to\infty}\sup_{t\in[0,T_0]}X^n(t)=0.\]
\end{lemma}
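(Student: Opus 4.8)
The plan is to derive a differential inequality for $X^n(t)$ of the form $\frac{d}{dt}X^{n+1}(t) \le C\, X^{n+1}(t) + C\, X^n(t) + (\text{small})\,\|\nabla \delta u^{n+1}\|_{L^2}^2$, with the dissipation from the velocity equation absorbing the last term, and then conclude by a standard Cauchy-sequence argument. First I would work with the system \eqref{eq:diff} governing the differences. For $\delta\rho^{n+1}$, multiplying the first equation by $\delta\rho^{n+1}$ and integrating, the term $u^n\cdot\nabla\delta\rho^{n+1}$ drops by incompressibility of $u^n$, while $\delta u^{n+1}\cdot\nabla\rho^{n+1} = \delta u^{n+1}\cdot\eta^{n+1}$ is bounded using the uniform $H^3$-bound on $\eta^{n+1}$ (hence $L^\infty$-bound) from Proposition \ref{prop:uniform_bound}, giving $\frac{d}{dt}\|\delta\rho^{n+1}\|_{L^2}^2 \le C(\|\delta\rho^{n+1}\|_{L^2}^2 + \|\delta u^{n+1}\|_{L^2}^2)$. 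The $\delta\eta^{n+1}$ equation is handled the same way: the transport term $u^{n-1}\cdot\nabla\delta\eta^{n+1}_j$ drops, and the remaining terms $\delta u^n\cdot\nabla\eta^{n+1}_j$, $\pa_j\delta u^{n+1}\cdot\eta^n$, $\pa_j u^n\cdot\delta\eta^n$ are controlled by the uniform bounds, except that $\pa_j\delta u^{n+1}\cdot\eta^n$ produces a term $\|\nabla\delta u^{n+1}\|_{L^2}\|\eta^n\|_{L^\infty}\|\delta\eta^{n+1}\|_{L^2}$, which I would split via Young's inequality into $\delta_0\|\nabla\delta u^{n+1}\|_{L^2}^2 + C\|\delta\eta^{n+1}\|_{L^2}^2$.

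Next I would turn to the velocity equation, which is the delicate one. Multiplying the $\delta u^{n+1}$ equation by $\delta u^{n+1}$ and integrating, the principal term $-\Delta\delta u^{n+1}$ gives the good dissipation $\|\nabla\delta u^{n+1}\|_{L^2}^2$, and $\rho^n\pa_t\delta u^{n+1}$ combines with the convection term (after using $\pa_t\rho^n + \dv(\rho^n u^{n-1}) = 0$, which follows from \eqref{INSK-relax-approx-a} and \eqref{INSK-relax-approx-d}) into $\frac{1}{2}\frac{d}{dt}\int\rho^n|\delta u^{n+1}|^2$. The pressure terms $\nabla\delta\pi^{n+1}$ vanishes against the divergence-free $\delta u^{n+1}$, while $\frac{\delta\rho^n}{\rho^{n-1}}\nabla\pi^n$ requires an $L^2$-control on $\nabla\pi^n$; I would obtain this from the momentum equation \eqref{INSK-relax-approx-b} by the standard elliptic estimate for the pressure (taking divergence), bounding $\|\nabla\pi^n\|_{L^2}$ in terms of the uniform $H^3$-bounds. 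The genuinely forcing terms $(\delta u^n\cdot\nabla)u^{n+1}$, $\frac{\delta\rho^n}{\rho^{n-1}}\Delta u^n$, $\delta\eta^n\dv k_\alpha^2\eta^n$, and $\frac{\delta\rho^n}{\rho^{n-1}}\eta^{n-1}\dv k_\alpha^2\eta^n$ all produce $C\,X^n(t)$-type contributions (using Lemma \ref{lem:est-k_alpha} for the $k_\alpha^2$ factors and the uniform bounds for the smooth factors), while $\eta^n\dv k_\alpha^2\delta\eta^{n+1}$ gives $C\|\eta^n\|_{L^\infty}\|\dv k_\alpha^2\delta\eta^{n+1}\|_{L^2}\|\delta u^{n+1}\|_{L^2} \le C\alpha\|\delta\eta^{n+1}\|_{L^2}\|\delta u^{n+1}\|_{L^2}$ via the second bound in Lemma \ref{lem:est-k_alpha}; note the $\alpha$-dependence is harmless here since $\alpha$ is fixed at this stage.

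Summing the three estimates and choosing $\delta_0$ small enough to absorb all $\|\nabla\delta u^{n+1}\|_{L^2}^2$ contributions into the dissipation, I arrive at $\frac{d}{dt}X^{n+1}(t) \le C_\alpha X^{n+1}(t) + C_\alpha X^n(t)$ on $[0,T_*]$ (with $X^{n+1}(0)=0$). Grönwall's inequality then yields $\sup_{[0,T_0]}X^{n+1} \le C_\alpha T_0 e^{C_\alpha T_0}\sup_{[0,T_0]}X^n$ for any $T_0\le T_*$; choosing $T_0$ small enough that $C_\alpha T_0 e^{C_\alpha T_0} \le \frac12$ makes $(\rho^n,u^n,\eta^n)$ a Cauchy sequence in $C([0,T_0];L^2)$, so $\sup_{[0,T_0]}X^n(t)\to 0$.

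The main obstacle I anticipate is the bookkeeping for the pressure term $\frac{\delta\rho^n}{\rho^{n-1}}\nabla\pi^n$: one must establish a uniform-in-$n$ bound on $\|\nabla\pi^n\|_{L^2}$ (equivalently, verify that the elliptic problem for $\pi^n$ closes with the uniform bounds from Proposition \ref{prop:uniform_bound}, keeping careful track that the $\kappa\eta^n\dv k_\alpha^2\eta^n$ source on the right-hand side of \eqref{INSK-relax-approx-b} is controlled via Lemma \ref{lem:est-k_alpha} with a factor of $\alpha$, which is acceptable for fixed $\alpha$). The rest is a routine energy estimate with careful use of Young's inequality to keep the small constant in front of $\|\nabla\delta u^{n+1}\|_{L^2}^2$.
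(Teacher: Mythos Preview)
Your overall plan matches the paper's: $L^2$ energy estimates on \eqref{eq:diff}, absorb the $\|\nabla\delta u^{n+1}\|_{L^2}^2$ terms into the dissipation, then contract via Gr\"onwall. Two points need correction. First, $\pa_t\rho^n+\dv(\rho^n u^{n-1})=0$ is false: from \eqref{INSK-relax-approx-a} one has $\pa_t\rho^n+\dv(\rho^n u^{n})=0$, so the convection term $\rho^n(u^{n-1}\cdot\nabla)\delta u^{n+1}$ does not combine exactly with the time derivative. This is harmless, since $\|\pa_t\rho^n\|_{L^\infty}$ and $\|\dv(\rho^n u^{n-1})\|_{L^\infty}$ are separately bounded by the uniform $H^3$ estimates and contribute $C\|\delta u^{n+1}\|_{L^2}^2$; the paper simply carries the term $\tfrac12\int\dv(\rho^n u^{n-1})|\delta u^{n+1}|^2\,dx$.

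The real gap is your treatment of $\langle\frac{\delta\rho^n}{\rho^{n-1}}\nabla\pi^n,\delta u^{n+1}\rangle$. An $L^2$ bound on $\nabla\pi^n$ is \emph{not} sufficient: with $\delta\rho^n,\delta u^{n+1}\in L^2$ you would need $\nabla\pi^n\in L^\infty$, and an elliptic estimate strong enough for that (say $\|\nabla\pi^n\|_{H^2}$) inevitably brings in $\|\pa_t u^n\|_{H^2}$, which is only controlled in $L^2_t$ by Proposition~\ref{prop:uniform_bound}, not pointwise. The paper bypasses the elliptic problem entirely: it groups the three $\frac{\delta\rho^n}{\rho^{n-1}}$ terms (involving $\Delta u^n$, $\nabla\pi^n$, and $\kappa\eta^{n-1}\dv k_\alpha^2\eta^n$) and substitutes \eqref{INSK-relax-approx-b} directly to rewrite them as $-\langle\delta\rho^n(\pa_t u^n+u^{n-1}\cdot\nabla u^n),\delta u^{n+1}\rangle$, which is bounded by $C\|\pa_t u^n\|_{H^2}\|\delta\rho^n\|_{L^2}\|\delta u^{n+1}\|_{L^2}$. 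Consequently the differential inequality is not $\frac{d}{dt}X^{n+1}\le C_\alpha X^{n+1}+C_\alpha X^n$ with a constant coefficient, but rather
\[
\frac{d}{dt}X^{n+1}\le C(\bar C^2+\|\pa_t u^n\|_{H^2}^2)X^n+C\bar C^2(1+\alpha)X^{n+1},
\]
and the contraction constant after Gr\"onwall involves $\int_0^t\|\pa_t u^n\|_{H^2}^2\,ds\le M_1$. Once you make this substitution (and accept a time-dependent but integrable coefficient), your argument goes through and coincides with the paper's.
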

\begin{proof}
For simplicity, we define $\bar{C}:=1+M_1+M_2$. Then, we take an $L^2$-inner product between \eqref{eq:diff} and $(\delta\rho^{n+1},\delta u^{n+1},\delta\eta^{n+1})$ to obtain the following energy estimates:
\[
\frac{1}{2}\frac{d}{dt}\|\delta\rho^{n+1}\|_{L^2}^2\leq \bar{C}\|\delta u^{n+1}\|_{L^2}\|\delta\rho^{n+1}\|_{L^2},
\]
\[
\begin{aligned}
	\frac{1}{2}\frac{d}{dt}\|\delta\eta^{n+1}\|_{L^2}^2&= -\la\delta u^n\cdot\nabla\eta_j^{n+1},\delta\eta_j^{n+1}\ra -\la\pa_j \delta u^{n+1}\cdot\eta^n,\delta\eta^{n+1}_j\ra-\la\pa_j u^n\cdot\delta\eta^n,\delta\eta^{n+1}_j\ra\\
	&\le C\bar{C}\left(\|\delta u^n\|_{L^2}^2+\|\delta \eta^{n}\|_{L^2}^2+\|\delta \eta^{n+1}\|_{L^2}^2\right) + \frac{1}{2}\|\nabla\delta u^{n+1}\|_{L^2}^2,
\end{aligned}
\]
and 
\[
\begin{aligned}
	&\frac{1}{2}\frac{d}{dt}\lt(\int\rho^n|\delta u^{n+1}|^2\,dx+\kappa\|k_\alpha\delta\eta^{n+1}\|_{L^2}^2\rt)+\|\nabla\delta u^{n+1}\|_{L^2}^2 \\
	&\leq C\bar{C}\lt(\|\delta u^n\|_{L^2}+\|\delta\eta^n\|_{L^2}\rt)\lt(\|\delta u^{n+1}\|_{L^2} +\|\delta\eta^{n+1}\|_{L^2}\rt)\\
	&\quad +\frac{1}{2}\int\dv(\rho^n u^{n-1})|\delta u^{n+1}|^2\,dx-\la \delta\rho^n(\pa_t u^n-u^{n-1}\cdot\nabla u^n),\delta u^{n+1}\ra  \\
	&\quad -\kappa\lt\la \eta^n,\pa_jk_\alpha^2\delta\eta^{n+1}_j\delta u^{n+1}\rt\ra-\kappa\lt\la \eta^n,\pa_j\delta u^{n+1}k_\alpha^2\delta\eta^{n+1}_j\rt\ra -\kappa\lt\la u^{n-1}\cdot\nabla \delta\eta^{n+1}_j, k_\alpha^2\delta\eta^{n+1}_j\rt\ra\\
	&\leq C\bar{C}^2\lt(\|\delta u^n\|_{L^2}+\|\delta\eta^n\|_{L^2}+\|\delta\rho^n\|_{L^2}\rt)\lt(\|\delta u^{n+1}\|_{L^2} +\|\delta\eta^{n+1}\|_{L^2}\rt)\\
	&\quad+C\bar{C}^2(1+\alpha)\lt(\|\delta u^{n+1}\|_{L^2}^2+\|\delta\eta^{n+1}\|_{L^2}^2\rt)+C\|\pa_t u^n\|_{H^2}\|\delta\rho^n\|_{L^2}\|\delta u^{n+1}\|_{L^2}
\end{aligned}
\]
Therefore, using the boundedness of $\rho^n$, we obtain
\[
\frac{d}{dt}X^{n+1} \leq C\lt(\bar{C}^2+\|\pa_t u^n\|_{H^2}^2\rt) X^n+C\bar{C}^2(1+\alpha)X^{n+1}.
\]
By Gr\"onwall's lemma, we have
\[\begin{aligned}
X^{n+1}(t) &\leq \lt(X^{n+1}(0)+C\int^t_0 \lt(\bar{C}^2+\|\pa_t u^n(s)\|_{H^2}^2\rt) X^n(s)\,ds\rt)e^{C\bar{C}^2(1+\alpha)t} \\
&\leq C\lt(\bar{C}^2t+\int^t_0\|\pa_t u^n(s)\|_{H^2}^2\,ds\rt)e^{C\bar{C}^2(1+\alpha)t}\sup_{s\in[0,t]}X^n(s),
\end{aligned}\]
which implies that there exists $0<T_0\leq T_*$ such that
\[
\sup_{t\in[0,T_0]} X^{n+1}(t)\leq \frac{1}{2} \sup_{t\in[0,T_0]} X^n(t).
\]
Therefore, we obtain $\displaystyle \sup_{t\in[0,T_0]} X^n(t)\rightarrow 0$ as $n\rightarrow\infty$.
\end{proof}

To sum up, there exists a limit function $(\rho-\bar{\rho},u,\eta)$ such that
\[
\lim_{n\rightarrow\infty}(\tilde{\rho}^{n}, u^{n}, \eta^{n}) = (\rho-\bar{\rho}, u,\eta)\in C([0,T_0];L^2(\R^d)).
\]
It is standard to show that the limit solution $(\rho-\bar{\rho}, u,\eta)$ is included in $C([0,T_0];H^3(\R^d))$ from \eqref{uniform_bound} and it solves
\begin{subequations}
	\begin{align}
		&\pa_t\rho+u\cdot\nabla\rho=0, \label{limit eq a}\\
		& \rho\lt[\pa_t u+(u\cdot\nabla)u\rt]-\Delta u+\nabla \pi =-\kappa\nabla\rho  k_\alpha^2\Delta\rho,\quad \dv u=0, \\
		&\pa_t\eta_j+u\cdot\nabla\eta_j+\pa_j u\cdot\eta=0, \label{limit eq c}
	\end{align}
\end{subequations}
subject to initial data:
\[
(\rho, u,\eta)(0,x)= (\rho_0,u_0,\nabla\rho_0)(x).
\] 
On the other hand, from the uniqueness of the following equation
\[
\begin{aligned}
	&\pa_t(\eta_j-\pa_j\rho)+u\cdot\nabla(\eta_j-\pa_j\rho)+\pa_j u\cdot(\eta-\nabla\rho)=0, \\
	&(\eta-\nabla\rho)(0,x)=0,
\end{aligned}
\]
we have $\eta=\nabla\rho$, which implies that $(\rho-\bar{\rho},u)\in C([0,T_0];H^4(\R^d))\times C([0,T_0];H^3(\R^d))$ solves \eqref{NSK-relax}.

\subsection{Uniform-in-$\alpha$ estimate and proof of Theorem \ref{thm:lwp}} \label{sec:3-3}

Until now, the existence time $T_0$ may depend on $\alpha$, since the estimates for $H^3$-norms are depend on $\alpha$. In this subsection, we will show that for each fixed $\alpha>0$, the existence time $T_0$ is bounded below uniformly in $\alpha$, and consequently, $T_0$ can be chosen independently of $\alpha$, which completes the proof of Theorem \ref{thm:lwp}. To this end, we first fix $\alpha>0$ and consider a local solution $(\rho,u)$ to \eqref{NSK-relax}. We conduct a similar energy estimate as before, which is independent of $\alpha$.

The following estimate is the key of the uniform-in-$\alpha$ estimate, which is analogous to Lemma \ref{lem:u_H3}, but with the $\alpha$-independent estimate.

\begin{lemma}\label{lem:u_H3_alpha_indep}
	Let $(\rho-\bar{\rho},u)\in C([0,T_0];H^4(\R^d))\times C([0,T_0];H^3(\R^d))$ be the local solution to \eqref{NSK-relax}. Then, there exists a positive constant $C$, which is independent of $\alpha$ such that
	\begin{align*}
		&\|u(t)\|_{H^3}^2+\kappa\|k_\alpha \nabla\rho(t)\|_{H^3}^2 + \int_0^t \|\nabla u\|_{H^3}^2\,ds\\
		&\le C(\|u_0\|_{H^3}^2+\kappa\|k_\alpha\nabla\rho_0\|_{H^3}^2)+C\int_0^t (1+\|\tilde{\rho}\|_{H^3}^2+\|u\|_{H^3}^2+\|\nabla\rho\|_{H^3}^2)^2+\|\pa_t u\|_{H^2}^2\,ds.
	\end{align*}
\end{lemma}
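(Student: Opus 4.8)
The plan is to carry out, for the genuine solution $(\rho,u)$ of \eqref{NSK-relax}, the same $H^3$ energy estimate that led to Lemma \ref{lem:u_H3}, and to isolate the single place where that argument produced a factor of $\alpha$ and replace it by an $\alpha$-uniform bound. Throughout write $\eta:=\nabla\rho$; taking the gradient of $\pa_t\rho+u\cdot\nabla\rho=0$ shows $\pa_t\eta_j+u\cdot\nabla\eta_j+\pa_j u\cdot\eta=0$, and $\rho_m\le\rho\le\rho_M$ as before. First I would apply $D^\gamma$ with $|\gamma|\le 3$ to the momentum equation of \eqref{NSK-relax} and pair it with $D^\gamma u$, apply $D^\gamma$ to the $\eta$-equation and pair with $\kappa D^\gamma k_\alpha^2\eta_j$, and add; this reproduces identity \eqref{est:H3-u} with $(\rho^n,u^n,\eta^n)=(\rho^{n+1},u^{n+1},\eta^{n+1})=(\rho,u,\eta)$ and the same six terms $I_{11},\dots,I_{16}$.

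The terms $I_{11}+I_{13}$, $I_{12}$, and $I_{14}+I_{16}$ are then estimated exactly as in the proof of Lemma \ref{lem:u_H3}: none of these used any unfavourable bound on $\nabla k_\alpha^2$, and in particular the cancellation between the top-order parts of $I_{14}$ (the momentum capillarity term, after one integration by parts using $\dv u=0$) and of $I_{16}$ uses only $\|k_\alpha f\|_{L^2}\le\|f\|_{L^2}$ from Lemma \ref{lem:est-k_alpha}, leaving a remainder $\ls\|\nabla\rho\|_{H^3}^2\|u\|_{H^3}$. The $I_{12}$ bound $\ls\|\tilde\rho\|_{H^3}\|\pa_t u\|_{H^2}\|u\|_{H^3}$ absorbs into $\|\pa_t u\|_{H^2}^2+C\|\tilde\rho\|_{H^3}^2\|u\|_{H^3}^2$ and accounts for the term $\int_0^t\|\pa_t u\|_{H^2}^2\,ds$ on the right-hand side of the lemma. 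Since $\kappa\le\kappa_0$, all of these are $\le C(1+\|\tilde\rho\|_{H^3}^2+\|u\|_{H^3}^2+\|\nabla\rho\|_{H^3}^2)^2+C\|\pa_t u\|_{H^2}^2$ with $C$ independent of $\alpha$.

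The only term requiring a new argument is $I_{15}=-\kappa\la D^\gamma(u\cdot\nabla\eta_j),D^\gamma k_\alpha^2\eta_j\ra$, which in Lemma \ref{lem:u_H3} was bounded only after paying a factor $\alpha$ via $\|\nabla D^\gamma k_\alpha^2\eta\|_{L^2}\le\alpha\|D^\gamma\eta\|_{L^2}$. Splitting $D^\gamma(u\cdot\nabla\eta_j)=u\cdot\nabla D^\gamma\eta_j+[D^\gamma,u\cdot\nabla]\eta_j$, the commutator term contributes $-\kappa\la[D^\gamma,u\cdot\nabla]\eta_j,k_\alpha^2D^\gamma\eta_j\ra\ls\kappa\|u\|_{H^3}\|\nabla\rho\|_{H^3}^2$ by Lemma \ref{lem:commutator} and Lemma \ref{lem:est-k_alpha}. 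For the principal part I would use that $k_\alpha^2$ commutes with $D^\gamma$, that $k_\alpha$ is self-adjoint, and that $k_\alpha(u\cdot\nabla v)=u\cdot\nabla(k_\alpha v)+[k_\alpha,u\cdot\nabla]v$, to write
\[
-\kappa\la u\cdot\nabla D^\gamma\eta_j,k_\alpha^2D^\gamma\eta_j\ra
=-\kappa\la u\cdot\nabla(k_\alpha D^\gamma\eta_j),k_\alpha D^\gamma\eta_j\ra-\kappa\la[k_\alpha,u\cdot\nabla]D^\gamma\eta_j,k_\alpha D^\gamma\eta_j\ra,
\]
where the first term on the right equals $\tfrac\kappa2\int(\dv u)\,|k_\alpha D^\gamma\eta_j|^2=0$. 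Everything thus reduces to the $\alpha$-uniform commutator estimate
\[
\|[k_\alpha,u\cdot\nabla]g\|_{L^2}\le C\|u\|_{H^3}\|g\|_{L^2},
\]
which I would prove directly on the Fourier side: writing $[k_\alpha,u\cdot\nabla]g=\sum_m[k_\alpha,u_m]\pa_m g$ and using the two bounds $|k_\alpha(\xi)|\le1$ and $|\xi|\,|\nabla_\xi k_\alpha(\xi)|=\alpha|\xi|^2(\alpha^2+|\xi|^2)^{-3/2}\le C$, both uniform in $\alpha$, one obtains $|k_\alpha(\xi)-k_\alpha(\zeta)|\le C\min\{1,|\xi-\zeta|/|\zeta|\}$, hence $\min\{1,|\xi-\zeta|/|\zeta|\}\,|\zeta_m|\le|\xi-\zeta|$, so that $|\widehat{[k_\alpha,u_m]\pa_m g}(\xi)|\le C\int|\xi-\zeta|\,|\widehat{u_m}(\xi-\zeta)|\,|\hat g(\zeta)|\,d\zeta$; Young's inequality then gives $\|[k_\alpha,u_m]\pa_m g\|_{L^2}\le C\| |\xi|\,\widehat{u_m}\|_{L^1}\|g\|_{L^2}\le C\|u\|_{H^3}\|g\|_{L^2}$, the last step using $d\le 3$. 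Applying this with $g=D^\gamma\eta_j$ yields $|I_{15}|\le C\kappa\|u\|_{H^3}\|\nabla\rho\|_{H^3}^2$ with $C$ independent of $\alpha$.

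Collecting the estimates for $I_{11},\dots,I_{16}$, summing over $|\gamma|\le3$, using $\rho_m\le\rho\le\rho_M$ and integrating in $t$ gives the claimed inequality. The main obstacle is precisely the commutator bound for $[k_\alpha,u\cdot\nabla]$: it is the mechanism that converts the $\alpha$-loss of Lemma \ref{lem:u_H3} into a harmless factor, and it rests on the scale-invariant symbol estimate $|\xi|\,|\nabla_\xi k_\alpha(\xi)|\le C$. Everything else repeats the computations of Section \ref{sec:3-1} verbatim.
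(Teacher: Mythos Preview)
Your argument is correct, but the mechanism you use to kill the $\alpha$-dependence in $I_{15}$ is genuinely different from the paper's. The paper does \emph{not} start from the combined identity \eqref{est:H3-u}; it pairs only the momentum equation with $D^\gamma u$, and then uses the transport equation to substitute $D^\gamma u\cdot\nabla\rho=-\pa_tD^\gamma\rho-(D^\gamma(u\cdot\nabla\rho)-D^\gamma u\cdot\nabla\rho)$ inside the capillarity term. This extracts $-\tfrac{\kappa}{2}\tfrac{d}{dt}\|D^\gamma k_\alpha\nabla\rho\|_{L^2}^2$ and leaves the dangerous piece $\kappa\la D^\gamma k_\alpha^2\Delta\rho,\,u\cdot D^\gamma\nabla\rho\ra$, which the paper handles by the \emph{algebraic identity} $I-k_\alpha^2=-\alpha^{-2}k_\alpha^2\Delta$: writing $D^\gamma\nabla\rho=D^\gamma\nabla k_\alpha^2\rho+D^\gamma\nabla(I-k_\alpha^2)\rho$ turns the term into two pieces, each of which becomes $\int u\cdot\nabla(|\,\cdot\,|^2)$ after one integration by parts and therefore vanishes (or is lower order) by $\dv u=0$. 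In contrast, you move a single factor of $k_\alpha$ across the transport operator and absorb the cost into a Fourier-side commutator estimate $\|[k_\alpha,u\cdot\nabla]g\|_{L^2}\le C\|u\|_{H^3}\|g\|_{L^2}$, which you derive from the scale-invariant symbol bound $|\xi|\,|\nabla_\xi k_\alpha(\xi)|\le C$. Your route is more robust---it works for any order-zero multiplier with this H\"ormander-type bound---while the paper's route is more elementary and exploits the specific operator identity for $k_\alpha^2$, requiring no pseudodifferential commutator machinery at all. Both lead to the same final inequality.
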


\begin{proof}
	For the $L^2$-estimate, we multiply \eqref{NSK-relax}$_2$ by $u$ and take integration to derive 
	\begin{align*}
		\frac{1}{2}\frac{d}{dt}\int \rho|u|^2 +\kappa|k_\alpha\nabla\rho|^2\,dx +\|\nabla u\|_{L^2}^2 &= -\frac{1}{2}\int \dv(\rho u)|u|^2\,dx -\int (\rho u\cdot \nabla) u\cdot u \,dx=0.
	\end{align*}
	To control the high-order derivative, we take $D^\gamma$ with $|\gamma|\le 3$ to \eqref{NSK-relax}$_2$ and take $L^2$-inner product with $D^\gamma u$ to obtain
	
	\begin{align*}
		&\frac{1}{2}\frac{d}{dt}\left(\int\rho |D^\gamma u|^2\,dx\right)+\|\nabla D^\gamma u\|_{L^2}^2\\
		&= -\frac{1}{2}\int \dv(\rho u)|D^\gamma u|^2\,dx -\la D^\gamma(\rho\pa_t u)-\rho D^\gamma\pa_t u,D^\gamma u\ra \\
		&\quad -\la D^\gamma((\rho u\cdot\nabla)u),D^\gamma u\ra-\la D^\gamma(\kappa\nabla\rho k_\alpha^2\Delta\rho),D^\gamma u\ra\\
		&=-\la D^\gamma(\rho\pa_t u)-\rho D^\gamma\pa_t u,D^\gamma u\ra -\la D^\gamma((\rho u\cdot\nabla)u)-(\rho u\cdot\nabla)D^\gamma u,D^\gamma u\ra\\
		&\quad -\la D^\gamma(\kappa\nabla\rho k_\alpha^2\Delta\rho)-\kappa\nabla\rho D^\gamma (k^2_\alpha\Delta\rho),D^\gamma u\ra-\la\kappa\nabla\rho D^\gamma(k^2_\alpha\Delta\rho), D^\gamma u\ra=\sum_{i=1}^4 I_{3i}.
	\end{align*}
	Since the terms $I_{31}$ and $I_{32}$ are the same form as in Lemma \ref{lem:u_H3}, it is straightforward to obtain 
	\begin{align*}
		I_{31}\le C\|\tilde{\rho}\|_{H^3}\|\pa_t u\|_{H^2}\|u\|_{H^3},\quad I_{32}\le C(1+\|\tilde{\rho}\|_{H^3})\|u\|_{H^3}^3.
	\end{align*}
	Next, we use Lemma \ref{lem:est-k_alpha} to estimate $I_{33}$ as
	\[I_{33}\le C(\|\nabla\rho\|_{H^3}\|k^2_\alpha\Delta\rho\|_{L^\infty} + \|\nabla^2\rho\|_{L^\infty}\|k^2_\alpha\Delta\rho\|_{H^2})\|u\|_{H^3}\le C\|\nabla\rho\|_{H^3}^2\|u\|_{H^3}.\]
	For $I_{34}$, we use
	\[\pa_t D^\gamma\rho =-D^\gamma(u\cdot\nabla\rho)=-(D^\gamma(u\cdot\nabla\rho)-D^\gamma u\cdot\nabla\rho)-D^\gamma u \cdot\nabla\rho\]
	to obtain
	\begin{align*}
		I_{34}&=-\la\kappa\nabla\rho  D^\gamma(k^2_\alpha\Delta\rho), D^\gamma u\ra=\kappa\la D^\gamma(k^2_\alpha\Delta\rho),\pa_t D^\gamma\rho+(D^\gamma(u\cdot\nabla\rho)- D^\gamma u\cdot\nabla\rho)\ra\\
		&=-\frac{\kappa}{2}\frac{d}{dt}\|D^\gamma k_\alpha\nabla\rho\|_{L^2}^2+\kappa\la D^\gamma (k^2_\alpha \Delta\rho),D^\gamma(u\cdot\nabla\rho)-D^\gamma u\cdot\nabla\rho\ra.
	\end{align*}
	On the other hand, the second term of the right-hand side of the above equality can be estimated as
	\begin{align*}
		&\la D^\gamma(k^2_\alpha\Delta\rho),D^\gamma(u\cdot\nabla\rho)-D^\gamma u\cdot\nabla\rho\ra\\
		&=\la D^\gamma(k^2_\alpha\Delta\rho),D^\gamma(u\cdot\nabla\rho)-D^\gamma u\cdot\nabla\rho-u\cdot D^\gamma\nabla\rho\ra+\la D^\gamma(k^2_\alpha\Delta\rho),u\cdot D^\gamma\nabla\rho\ra\\
		&\le C\|u\|_{H^3}\|\nabla\rho\|^2_{H^3}+\la D^\gamma(k^2_\alpha\Delta\rho),u\cdot D^\gamma\nabla\rho\ra
	\end{align*}
	where we used the Leibniz rule and Lemma \ref{lem:sobolev}.
	Again, we further split the second term of the right-hand side as
	\begin{align*}
		\int D^\gamma(k^2_\alpha \pa_{ii}\rho)u_j D^\gamma\pa_j\rho\,dx&=\int D^\gamma(k^2_\alpha\pa_{ii}\rho)u_j D^\gamma\pa_jk_\alpha^2\rho\,dx+\int D^\gamma(k^2_\alpha\pa_{ii}\rho)u_j D^\gamma\pa_j(I-k_\alpha^2)\rho\,dx\\
		&=I_{341}+I_{342}. 
	\end{align*}
	First, using Lemma \ref{lem:est-k_alpha}, $I_{341}$ can be bounded as
	\[I_{341}=-\int D^\gamma(k^2_\alpha\pa_i\rho)\pa_iu_j D^\gamma\pa_jk^2_\alpha\rho\,dx\le C\|u\|_{H^3}\|\nabla\rho\|_{H^3}^2.\]
	Next, we use the identity
	\[I-k^2_\alpha = -\frac{1}{\alpha^2}k_\alpha^2\Delta,\]
	which can be easily obtained after taking Fourier transform, to compute $I_{342}$ as
	\[I_{342}=-\frac{1}{\alpha^2}\int D^\gamma(k^2_\alpha\Delta\rho)u_j D^\gamma\pa_j k^2_\alpha \Delta\rho\,dx = -\frac{1}{2\alpha^2}\int u_j \pa_j|D^\gamma(k^2_\alpha\Delta\rho)|^2\,dx=0.\]
	Thus, we conclude that
	\[I_{34}\le -\frac{\kappa}{2}\frac{d}{dt}\|D^\gamma k_\alpha \nabla \rho\|_{L^2}^2+C\|u\|_{H^3}\|\nabla\rho\|_{H^3}^2.\]
	Combining all the estimates of $I_{3i}$ with $i=1,2,3, 4$, we obtain
	
	\begin{align*}
		&\frac{1}{2}\frac{d}{dt}\left(\int\rho |D^\gamma u|^2\,dx+\kappa\|D^\gamma k_\alpha\nabla\rho\|_{L^2}^2\right)+\|\nabla D^\gamma u\|_{L^2}^2 \le C(1+\|\tilde{\rho}\|_{H^3}^2+\|u\|_{H^3}^2+\|\nabla\rho\|_{H^3}^2)^2+\|\pa_t u\|_{H^2}^2,
	\end{align*}
	where $C$ is now independent of $\alpha$. After integrating with respect to time and using the lower and upper boundedness of $\rho$, we get the desired estimate.
\end{proof}

The remaining estimate is merely a repetition of Lemma \ref{lem:tilderho_H3}, Lemma \ref{lem:eta_H3}, Lemma \ref{lem:patu} and Lemma \ref{lem:patu_H2}. Since there is no dependency on $\alpha$ in these preceding lemmas, we can follow the same strategy to obtain the following bounds: there exists a positive constant $C$, which is independent of $\alpha$, such that
\begin{align*}
	\|\tilde{\rho}(t)\|_{H^3}^2 &\le \|\tilde{\rho}_0\|_{H^3}^2+C\int^t_0\|u\|_{H^3}\|\tilde{\rho}\|_{H^3}^2\,ds, \\
	\|\nabla \rho(t)\|_{H^3}^2 &\le \|\nabla\rho_0\|_{H^3}^2+C\int^t_0\lt(\|u\|_{H^3}\|\nabla\rho\|_{H^3}^2+\|\nabla\rho\|_{H^3}^4\rt)\,ds+\delta_0\int^t_0\|\nabla u\|_{H^3}^2\,ds, \\
	\int_0^t \|\pa_t u\|_{L^2}^2\,ds +\|\nabla u(t)\|_{L^2}^2 &\le C\|\nabla u_0\|_{L^2}^2+C\int_0^t (\|u\|_{H^3}^2+\|\nabla\rho\|_{H^3}^2)^2\,ds,\\
	\|\nabla u(t)\|_{H^2}^2+\int_0^t \|\pa_t u\|_{H^2}^2 &\le C\|\nabla u_0\|_{H^2}^2 +C\int_0^t (1+\|\tilde{\rho}\|_{H^3}^2)(\|u\|_{H^3}^2+\|\nabla\rho\|_{H^3}^2)^2 +\|\tilde{\rho}\|_{H^3}^4\|\pa_t u\|_{L^2}^2\,ds.
\end{align*}

Now we are ready to find the existence time $T_0$ which is bounded below uniformly in $\alpha$. We first define a non-decreasing function
\[
\mathcal{M}(t):= \lt(1+\|\tilde{\rho}_0\|_{H^4}^2+\|u_0\|_{H^3}^2\rt)+\int^t_0 \lt(1+\|\tilde{\rho}\|_{H^4}^2+\|u\|_{H^3}^2\rt)^2\,ds,
\]
to obtain that 
\[
\|\tilde{\rho}(t)\|_{H^3}^2+\int_0^t \|\pa_t u\|_{L^2}^2\,ds \le C\mathcal{M}(t)
\]
and
\begin{align*}
	&\|u(t)\|_{H^3}^2+\kappa\|k_\alpha\nabla\rho\|_{H^3}^2+\|\nabla\rho\|_{H^3}^2+\int^t_0\|\nabla u\|_{H^3}^2+\|\pa_t u\|_{H^2}^2\,ds \\
	&\le C\mathcal{M}(t)+C\int^t_0 \|\tilde{\rho}\|_{H^3}^2\lt(\|u\|_{H^3}^2+\|\nabla\rho\|_{H^3}^2\rt)^2\,ds+C\int^t_0\|\tilde{\rho}\|_{H^3}^4\|\pa_t u\|_{L^2}^2\,ds \le C\mathcal{M}^3(t)
\end{align*}
using Lemma \ref{lem:u_H3_alpha_indep} and the above estimates.
Hence, we arrive at
\[
	\frac{d}{dt}\mathcal{M}(t)= \lt(1+\|\tilde{\rho}(t)\|_{H^4}^2+\|u(t)\|_{H^3}^2\rt)^2 \le C\mathcal{M}^6(t),
\]
which implies 
\[
\mathcal{M}(t)\le \frac{\mathcal{M}(0)}{\sqrt[5]{1-5C \mathcal{M}^5(0)t}}.
\]
Therefore, we choose $T_0:=\frac{1}{10C\mathcal{M}^5(0)}$ independently of $\alpha$ to deduce that for all $t\in[0,T_0]$
\[
\mathcal{M}(t)\le \sqrt[5]{2}\mathcal{M}(0)\le \sqrt[5]{2}\lt(1+\|\tilde{\rho}_0\|_{H^4}^2+\|u_0\|_{H^3}^2\rt)
\]
and hence
\[
\|\tilde{\rho}(t)\|_{H^4}^2+\|u(t)\|_{H^3}^2+\int^t_0\|\nabla u\|_{H^3}^2+\|\pa_t u\|_{H^2}^2\,ds \le C\lt(1+\|\tilde{\rho}_0\|_{H^4}^2+\|u_0\|_{H^3}^2\rt)^3.
\]
This completes the proof of Theorem \ref{thm:lwp}.

\begin{remark}
	We remark that the local existence of the local incompressible NSK system \eqref{NSK} can also be obtained by using similar iterative method. In this case, one can consider the following sequence of linear systems:
	\begin{align*}
		&\pa_t\rho^{n+1}+u^{n+1}\cdot\nabla\rho^{n+1}=0,\\
		&\rho^n[\pa_t u^{n+1}+(u^n\cdot\nabla)u^{n+1}] -\Delta u^{n+1}+\nabla \pi^{n+1}=-\kappa\eta^n \dv \eta^{n+1},\\
		&\pa_t \eta_j^{n+1}+u^n\cdot \nabla\eta_j^{n+1}+\pa_j u^{n+1}\cdot \eta^n=0, \\
		&\dv u^{n+1}=0.
	\end{align*}
	Then, by following similar estimates in Section \ref{sec:3-1} and Section \ref{sec:3-2}, one can easily get the local well-posedness of \eqref{NSK}.
\end{remark}

\section{Convergence towards the local incompressible NSK equations}\label{sec:4}
In this section, we prove Theorem \ref{thm:nonlocal_to_local} by showing that the solution to the relaxed INSK model \eqref{NSK-relax} converges to the solution to the local INSK model \eqref{NSK}, as $\alpha\to\infty$. Precisely, we fix $\kappa=1$ for simplicity, and let $(\rho^\alpha,u^\alpha)$ be the solution to \eqref{NSK-relax}
and $(\rho,u)$ be the solution to the standard INSK model
\begin{align}
\begin{aligned}\label{NSK-original}
	&\pa_t\rho+u\cdot\nabla\rho = 0,\\
	&\rho(\pa_t u + (u\cdot\nabla)u)-\Delta u +\nabla \pi = -\nabla\rho \Delta\rho,\\
	&\dv u = 0.
\end{aligned}
\end{align}

We note that the local well-posedness of \eqref{NSK-original} is already proved in \cite{YYZ15}. Specifically, there exists a solution $(\rho-\bar{\rho},u)\in C([0,T_0];H^4(\R^d))\times C([0,T_0];H^3(\R^d))$ to \eqref{NSK-original} such that
\begin{equation}\label{est:limit}
	\|\rho-\bar{\rho}\|_{H^4}^2+\|u\|_{H^3}^2+\int_0^t\|\nabla u\|_{H^3}^2+\|\pa_tu\|_{H^2}^2\,ds\le C.
\end{equation}

\subsection{Difference estimates}
To show the desired convergence in Theorem \ref{thm:nonlocal_to_local}, we define a difference $(\delta\rho,\delta u,\delta\pi):=(\rho^\alpha-\rho,u^\alpha-u,\pi^\alpha-\pi)$. Then, $(\delta\rho,\delta u,\delta\pi)$ satisfies

\begin{align}
\begin{aligned}\label{INSK-diff}
	&\pa_t\delta\rho + u^\alpha \cdot\nabla \delta\rho + \delta u \cdot\nabla\rho = 0,\\
	&\rho^\alpha [\pa_t \delta u+(u^\alpha\cdot\nabla)\delta u+ (\delta u \cdot\nabla)u ] - \Delta \delta u + \frac{\delta \rho}{\rho}\Delta u + \nabla\delta\pi -\frac{\delta\rho}{\rho}\nabla\pi \\
	&=-\nabla\delta\rho k_\alpha^2\Delta\rho^\alpha-\nabla\rho k_\alpha^2\Delta \delta\rho -\nabla\rho(k_\alpha^2-I)\Delta\rho +\frac{\delta\rho}{\rho}\nabla\rho\Delta\rho. 
\end{aligned}
\end{align}

We start with the estimate for the $H^2$-norm of the difference $\delta \rho$.

\begin{lemma}\label{lem:delta_rho_H2}
	Let $(\rho^\alpha,u^\alpha)$ be the local solution to \eqref{NSK-relax} and $(\rho,u)$ be the local solution to \eqref{NSK-original}. Then, there exists small enough positive constant $\delta_0$ such that 
	\[\frac{d}{dt}\|\delta\rho\|_{H^2}^2\le C\|\delta\rho\|_{H^2}^2+C\|\delta u\|_{H^1}^2+\delta_0\|\nabla^2\delta u\|_{L^2}^2,\]
\end{lemma}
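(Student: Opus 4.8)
The plan is to perform an energy estimate on the transport equation for $\delta\rho$ in $H^2$. First I would apply $D^\gamma$ with $|\gamma|\le 2$ to the first equation of \eqref{INSK-diff}, namely $\pa_t\delta\rho + u^\alpha\cdot\nabla\delta\rho + \delta u\cdot\nabla\rho = 0$, then take the $L^2$-inner product with $D^\gamma\delta\rho$. The term coming from $u^\alpha\cdot\nabla\delta\rho$ is handled exactly as in Lemma \ref{lem:tilderho_H3}: write $\la D^\gamma(u^\alpha\cdot\nabla\delta\rho) - u^\alpha\cdot\nabla D^\gamma\delta\rho, D^\gamma\delta\rho\ra$ as a commutator, bound it via Lemma \ref{lem:commutator} by $C(\|u^\alpha\|_{H^3}\|\nabla\delta\rho\|_{L^\infty} + \|\nabla u^\alpha\|_{L^\infty}\|\nabla\delta\rho\|_{H^1})\|\delta\rho\|_{H^2}\le C\|u^\alpha\|_{H^3}\|\delta\rho\|_{H^2}^2$, and use the uniform $H^3$-bound on $u^\alpha$ from Theorem \ref{thm:lwp} to absorb $\|u^\alpha\|_{H^3}$ into the constant $C$.

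The new and more delicate term is the one from $\delta u\cdot\nabla\rho$, i.e. $-\la D^\gamma(\delta u\cdot\nabla\rho), D^\gamma\delta\rho\ra$. Here I would apply the product estimate from Lemma \ref{lem:commutator} to get $\|D^\gamma(\delta u\cdot\nabla\rho)\|_{L^2}\le C(\|\delta u\|_{H^2}\|\nabla\rho\|_{L^\infty} + \|\delta u\|_{L^\infty}\|\nabla\rho\|_{H^2})$, and then use the regularity bound \eqref{est:limit} on $\rho$ (which gives $\|\nabla\rho\|_{H^3}\le C$, in particular $\|\nabla\rho\|_{L^\infty}$ and $\|\nabla\rho\|_{H^2}$ are bounded) to reduce this to $C\|\delta u\|_{H^2}$. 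The subtlety is that we want the final estimate to involve only $\|\delta u\|_{H^1}$ plus a small multiple of $\|\nabla^2\delta u\|_{L^2}$, not the full $\|\delta u\|_{H^2}$, since $\|\delta u\|_{H^2}^2$ cannot be absorbed or closed by Grönwall at this level. To achieve this I would split off the top-order piece: the contribution involving two derivatives on $\delta u$ is controlled by $C\|\nabla^2\delta u\|_{L^2}\|\delta\rho\|_{H^2}\le \delta_0\|\nabla^2\delta u\|_{L^2}^2 + C\|\delta\rho\|_{H^2}^2$ via Young's inequality with a small parameter $\delta_0$, while all lower-order pieces are bounded by $C\|\delta u\|_{H^1}\|\delta\rho\|_{H^2}\le C\|\delta u\|_{H^1}^2 + C\|\delta\rho\|_{H^2}^2$.

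Summing over all $|\gamma|\le 2$ and collecting terms then yields
\[
\frac{1}{2}\frac{d}{dt}\|\delta\rho\|_{H^2}^2 \le C\|\delta\rho\|_{H^2}^2 + C\|\delta u\|_{H^1}^2 + \delta_0\|\nabla^2\delta u\|_{L^2}^2,
\]
which is the claimed inequality (after renaming $2\delta_0$ as $\delta_0$ and absorbing factors of $2$ into $C$). I expect the main obstacle to be the careful bookkeeping in the $\delta u\cdot\nabla\rho$ term: one must track precisely which Leibniz terms carry two derivatives on $\delta u$ (these are the only ones that genuinely require the $\delta_0\|\nabla^2\delta u\|_{L^2}^2$ slack, to be later absorbed by the dissipation in the $\delta u$ momentum estimate) versus those that can be bounded purely in terms of $\|\delta u\|_{H^1}$ using the Gagliardo--Nirenberg interpolation of Lemma \ref{lem:sobolev}; everything else is routine and mirrors the arguments already carried out in Section \ref{sec:3}.
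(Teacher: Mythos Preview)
Your overall strategy matches the paper's proof: energy estimate on \eqref{INSK-diff}$_1$ at the $H^2$ level, commutator for the $u^\alpha\cdot\nabla\delta\rho$ term, product estimate for the $\delta u\cdot\nabla\rho$ term, and then splitting $\|\delta u\|_{H^2}$ into an $H^1$ part plus a top-order $\|\nabla^2\delta u\|_{L^2}$ handled by Young's inequality with small parameter $\delta_0$.

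There is, however, one genuine technical slip in the commutator step. You bound the commutator via Lemma~\ref{lem:commutator} by
\[
C\bigl(\|u^\alpha\|_{H^3}\|\nabla\delta\rho\|_{L^\infty} + \|\nabla u^\alpha\|_{L^\infty}\|\nabla\delta\rho\|_{H^1}\bigr)\|\delta\rho\|_{H^2}
\]
and then claim this is $\le C\|u^\alpha\|_{H^3}\|\delta\rho\|_{H^2}^2$. But $\delta\rho$ is only being estimated in $H^2$, so $\nabla\delta\rho\in H^1$, which does \emph{not} embed into $L^\infty$ in either $d=2$ or $d=3$. The analogy with Lemma~\ref{lem:tilderho_H3} breaks precisely here: there $\tilde\rho^{n+1}\in H^3$, so $\nabla\tilde\rho^{n+1}\in H^2\hookrightarrow L^\infty$, whereas here you have one fewer derivative to spend.

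The paper fixes this by expanding the $|\gamma|=2$ commutator directly via the Leibniz rule and using an $L^4$--$L^4$ H\"older pairing on the worst piece:
\[
\|\nabla^2 u^\alpha\|_{L^4}\|\nabla\delta\rho\|_{L^4}+\|\nabla u^\alpha\|_{L^\infty}\|\nabla^2\delta\rho\|_{L^2},
\]
and both $\|\nabla^2 u^\alpha\|_{L^4}\le C\|u^\alpha\|_{H^3}$ and $\|\nabla\delta\rho\|_{L^4}\le C\|\delta\rho\|_{H^2}$ hold by Sobolev embedding in $d\le 3$. With this modification the remainder of your argument is correct and coincides with the paper's.
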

\begin{proof}
We apply $D^\gamma$ with $|\gamma|\le 2$ to \eqref{INSK-diff}$_1$ and take inner product with $D^\gamma\delta\rho$ to obtain
\begin{align*}
	\frac{1}{2}\frac{d}{dt}\|D^\gamma\delta\rho\|_{L^2}^2 &=-\la D^\gamma(u^\alpha\cdot\nabla\delta\rho), D^\gamma\delta\rho\ra-\la D^\gamma(\delta u\cdot\nabla\rho), D^\gamma\delta\rho\ra\\
	&=-\la D^\gamma(u^\alpha\cdot\nabla\delta\rho)-u^\alpha\cdot D^\gamma\nabla\delta\rho,D^\gamma\delta\rho\ra-\la D^\gamma(\delta u\cdot\nabla\rho),D^\gamma\delta\rho\ra\\
	&\le C\|\nabla u^\alpha\|_{L^\infty}\|\nabla\delta\rho\|_{L^2}\|\delta\rho\|_{H^1}+ C(\|\nabla^2 u^\alpha\|_{L^4}\|\nabla\delta\rho\|_{L^4}+\|\nabla u^\alpha\|_{L^\infty}\|\nabla^2\delta\rho\|_{L^2})\|\delta\rho\|_{H^2}\\
	&\quad + C(\|\delta u\|_{H^2}\|\nabla\rho\|_{L^\infty}+\|\delta u\|_{L^\infty}\|\nabla\rho\|_{H^2})\|\delta\rho\|_{H^2}\\
	&\le C\|\delta\rho\|_{H^2}^2 + C\|\delta u\|_{H^1}\|\delta\rho\|_{H^2} + C\|\nabla^2\delta u\|_{L^2}\|\delta\rho\|_{H^2},
\end{align*}
where we used the uniform bound of $u^\alpha$ and $\rho$. Therefore, using Young's inequality, we have
\[\frac{d}{dt}\|\delta\rho\|_{H^2}^2\le C\|\delta\rho\|_{H^2}^2+C\|\delta u\|_{H^1}^2+\delta_0\|\nabla^2\delta u\|_{L^2}^2.\]

\end{proof}

Next, we present $H^1$-estimate for $\delta u$. 

\begin{lemma}\label{lem:delta_u_H1}
	Let $(\rho^\alpha,u^\alpha)$ be the local solution to \eqref{NSK-relax} and $(\rho,u)$ be the local solution to \eqref{NSK-original}. Then
	\begin{align*}
		&\|\delta u(t)\|_{H^1}^2+\|k_\alpha\nabla\delta\rho(t)\|_{H^1}^2+\int_0^t \|\nabla\delta u\|_{H^1}^2\,ds\\
		&\le C\int_0^t(1+\|\pa_t u\|_{H^2}^2)\left(\|\delta\rho\|_{H^2}^2+\|\delta u\|_{H^1}^2\right)+\|\pa_t\delta u\|_{L^2}^2\,ds + \frac{Ct}{\alpha^4}.
	\end{align*}
\end{lemma}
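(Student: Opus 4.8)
The plan is to run a weighted energy estimate on the difference system \eqref{INSK-diff} for $\delta u$, differentiated up to first order, closely following the scheme of Lemma \ref{lem:u_H3} and of its $\alpha$-uniform refinement Lemma \ref{lem:u_H3_alpha_indep}, but one derivative lower. Two preliminary reductions set up the estimate. First, using the momentum equation of the target system \eqref{NSK-original} in the form $\Delta u-\nabla\pi-\nabla\rho\Delta\rho=\rho(\pa_tu+u\cdot\nabla u)$, the three coefficient terms $\tfrac{\delta\rho}{\rho}\Delta u$, $-\tfrac{\delta\rho}{\rho}\nabla\pi$, $\tfrac{\delta\rho}{\rho}\nabla\rho\Delta\rho$ in \eqref{INSK-diff} collapse to the single term $-\delta\rho(\pa_tu+u\cdot\nabla u)$; this is the source of the weight $(1+\|\pa_tu\|_{H^2}^2)$, the $H^2$-norm of $\pa_tu$ being controlled by \eqref{est:limit}. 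Second, from $\pa_t\delta\rho+u^\alpha\cdot\nabla\delta\rho+\delta u\cdot\nabla\rho=0$ one gets, for each $|\gamma|\le1$,
\[
\nabla\rho\cdot D^\gamma\delta u=-\pa_t D^\gamma\delta\rho-D^\gamma(u^\alpha\cdot\nabla\delta\rho)-R_\gamma,\qquad R_\gamma:=D^\gamma(\delta u\cdot\nabla\rho)-D^\gamma\delta u\cdot\nabla\rho,
\]
and one records the screened-Poisson substitution $c:=k_\alpha^2\delta\rho$, for which $k_\alpha^2\Delta\delta\rho=\Delta c$ and $\nabla\delta\rho=\nabla c-\alpha^{-2}\nabla\Delta c$.

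Applying $D^\gamma$, $|\gamma|\le1$, to the reduced momentum equation and pairing with $D^\gamma\delta u$: the term $\la\rho^\alpha\pa_tD^\gamma\delta u,D^\gamma\delta u\ra$ gives $\tfrac12\tfrac{d}{dt}\int\rho^\alpha|D^\gamma\delta u|^2$, whose $\pa_t\rho^\alpha$-residual cancels the top-order part of the convective term after integration by parts, via $\pa_t\rho^\alpha+\dv(\rho^\alpha u^\alpha)=0$; the viscous term produces the dissipation $\|\nabla D^\gamma\delta u\|_{L^2}^2$; the pressure gradient integrates to zero against the divergence-free $D^\gamma\delta u$; and the pairing of the commutator $D^\gamma(\rho^\alpha\pa_t\delta u)-\rho^\alpha D^\gamma\pa_t\delta u$ with $D^\gamma\delta u$, bounded by $\|\nabla\rho^\alpha\|_{L^\infty}\|\pa_t\delta u\|_{L^2}\|\nabla\delta u\|_{L^2}$, supplies (after Young) the $\|\pa_t\delta u\|_{L^2}^2$ term. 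The remaining convective contributions, the term $-\delta\rho(\pa_tu+u\cdot\nabla u)$, and the capillarity term $-\nabla\delta\rho\,k_\alpha^2\Delta\rho^\alpha$ are all estimated by Lemma \ref{lem:commutator}, the uniform bounds $\|\rho^\alpha-\bar\rho\|_{H^4},\|u^\alpha\|_{H^3}\le C$ from Theorem \ref{thm:lwp} (hence $\|\delta\rho\|_{H^4},\|\delta u\|_{H^3}\le C$), the analogous bounds for $(\rho,u)$ from \eqref{est:limit}, $\rho_m\le\rho^\alpha\le\rho_M$, and Young's inequality, and contribute $C(1+\|\pa_tu\|_{H^2}^2)(\|\delta\rho\|_{H^2}^2+\|\delta u\|_{H^1}^2)$. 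The term $-\nabla\rho(k_\alpha^2-I)\Delta\rho$ is handled by integrating by parts to move the top-order derivative onto $\delta u$ — the resulting $\nabla\rho\,\Delta\delta u$ being absorbed by a sufficiently small fraction of $\int_0^t\|\nabla\delta u\|_{H^1}^2\,ds$ — and bounding $\|(k_\alpha^2-I)\Delta\rho\|_{L^2}\le\alpha^{-2}\|\Delta\rho\|_{H^2}\le C\alpha^{-2}$ by Lemma \ref{lem:est-k_alpha} with $l=2$ (valid because $\rho-\bar\rho\in H^4$); this is the first source of the $t/\alpha^4$ term.

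The heart of the proof is the capillarity term $-\la D^\gamma(\nabla\rho\,k_\alpha^2\Delta\delta\rho),D^\gamma\delta u\ra$. Its commutator part (derivatives on $\nabla\rho$) is at once $\lesssim\|\delta\rho\|_{H^2}\|\delta u\|_{H^1}$ since $\|k_\alpha^2\Delta\delta\rho\|_{L^2}\le\|\delta\rho\|_{H^2}$; its leading part $-\la D^\gamma(k_\alpha^2\Delta\delta\rho),\nabla\rho\cdot D^\gamma\delta u\ra$ is rewritten via the identity above into three pieces. Piece (i), $\la D^\gamma(k_\alpha^2\Delta\delta\rho),\pa_t D^\gamma\delta\rho\ra$, equals $-\tfrac12\tfrac{d}{dt}\|k_\alpha\nabla D^\gamma\delta\rho\|_{L^2}^2$ (using $k_\alpha^2\Delta=\Delta k_\alpha^2$ and self-adjointness of $k_\alpha$) and, moved to the left and summed, furnishes $\tfrac12\tfrac{d}{dt}\|k_\alpha\nabla\delta\rho\|_{H^1}^2$. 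Piece (ii), $\la D^\gamma(k_\alpha^2\Delta\delta\rho),R_\gamma\ra$, after integrating one derivative off $\Delta c=k_\alpha^2\Delta\delta\rho$, is $\lesssim\|\delta\rho\|_{H^2}\|\delta u\|_{H^1}$, using $\|\Delta c\|_{L^2}\le\|\delta\rho\|_{H^2}$ and $\|\nabla^2\rho\|_{L^\infty}+\|\nabla\Delta\rho\|_{L^3}\lesssim\|\rho\|_{H^4}$. Piece (iii), $\la D^\gamma(k_\alpha^2\Delta\delta\rho),D^\gamma(u^\alpha\cdot\nabla\delta\rho)\ra$, is split via $k_\alpha^2\Delta\delta\rho=\Delta c$ and $\nabla\delta\rho=\nabla c-\alpha^{-2}\nabla\Delta c$ as $\la D^\gamma\Delta c,D^\gamma(u^\alpha\cdot\nabla c)\ra-\alpha^{-2}\la D^\gamma\Delta c,D^\gamma(u^\alpha\cdot\nabla\Delta c)\ra$; in both, the top-order transport term is a perfect derivative and vanishes by $\dv u^\alpha=0$ (as for the term $I_{342}$ in the proof of Lemma \ref{lem:u_H3_alpha_indep}), the commutator part of the first term is absorbed into $C\|\delta\rho\|_{H^2}^2$ (using $\|\nabla D^\gamma c\|_{L^2}\le\|\delta\rho\|_{H^2}$) after a further integration by parts that again exploits $\dv u^\alpha=0$, and the commutator part of the second term, after the analogous integration by parts, is $\lesssim\alpha^{-2}\|\nabla D^\gamma\Delta c\|_{L^2}\|\Delta c\|_{L^2}\le C\alpha^{-2}\|\delta\rho\|_{H^2}$ — with $\|\nabla D^\gamma\Delta c\|_{L^2}\le\|\delta\rho\|_{H^4}\le C$ — which Young turns into $C\alpha^{-4}+\tfrac12\|\delta\rho\|_{H^2}^2$, the second source of the $t/\alpha^4$ term. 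Summing over $|\gamma|\le1$, using $\rho_m\le\rho^\alpha\le\rho_M$ and $\delta u(0)=0=\delta\rho(0)$, and integrating in $t$ yields the assertion.

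The main obstacle is piece (iii): the factor $D^\gamma(k_\alpha^2\Delta\delta\rho)$ carries three derivatives on $\delta\rho$, one more than the statement controls, so one may not pair it with $\nabla\delta\rho$ and then bound $\|\delta\rho\|_{H^3}$ crudely by a constant — that would leave an $O(t)$, not $O(t/\alpha^4)$, error. The remedy, which is the very reason for the $k_\alpha^2$-weight on the density difference, is the screened-Poisson substitution $c=k_\alpha^2\delta\rho$ together with the incompressibility of $u^\alpha$: it makes the top-order transport null and confines every surplus derivative of $\delta\rho$ into a factor $\alpha^{-2}$ multiplying quantities ($\|\delta\rho\|_{H^4}$, $\|u^\alpha\|_{H^3}$, $\|\rho\|_{H^4}$) bounded uniformly in $\alpha$ on $[0,T_0]$. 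A secondary point is bookkeeping: every integration by parts must keep at most one derivative on $\delta u$, the only exception being the $(k_\alpha^2-I)\Delta\rho$ term, where the $\Delta\delta u$ it creates is absorbed, with a small constant, into the dissipation.
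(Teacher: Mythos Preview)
Your proof is correct and follows essentially the same strategy as the paper: a weighted $H^1$ energy estimate on \eqref{INSK-diff}, the substitution $\nabla\rho\cdot D^\gamma\delta u=-\pa_tD^\gamma\delta\rho-D^\gamma(u^\alpha\cdot\nabla\delta\rho)-R_\gamma$ to produce the $\tfrac{d}{dt}\|k_\alpha\nabla\delta\rho\|_{H^1}^2$ energy, and the screened-Poisson identity $(I-k_\alpha^2)=-\alpha^{-2}k_\alpha^2\Delta$ to annihilate the top-order transport via $\dv u^\alpha=0$. The only cosmetic difference is in your piece~(iii) (the paper's $J_9$): the paper extracts the commutator $D^\gamma(u^\alpha\cdot\nabla\delta\rho)-u^\alpha\cdot D^\gamma\nabla\delta\rho$ \emph{before} splitting $\nabla\delta\rho=k_\alpha^2\nabla\delta\rho+(I-k_\alpha^2)\nabla\delta\rho$, so the entire $(I-k_\alpha^2)$ piece vanishes exactly and $J_9\le C\|\delta\rho\|_{H^2}^2$ with no $\alpha^{-4}$ contribution, whereas your ordering leaves a residual commutator in the $\alpha^{-2}$ part that you close (harmlessly) with the uniform $H^4$ bound on $\delta\rho$, at the cost of an extra $C\alpha^{-4}$.
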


\begin{proof}
We take $D^\gamma$ with $|\gamma|\le 1$ to \eqref{INSK-diff}$_2$ and take an inner product with $D^\gamma \delta u$ to derive

\begin{align}
\begin{aligned}\label{est_du}
	&\frac{1}{2}\frac{d}{dt}\int \rho^\alpha |D^\gamma \delta u|^2\,dx+\|\nabla D^\gamma\delta u\|_{L^2}^2\\
	&= -\frac{1}{2}\int \dv(\rho^\alpha u^\alpha)|D^\gamma\delta u|^2\,dx -\la D^\gamma(\rho^\alpha \pa_t\delta u)-\rho^\alpha D^\gamma\pa_t\delta u,D^\gamma \delta u\ra\\
	&\quad  - \la D^\gamma(\rho^\alpha u^\alpha\cdot\nabla \delta u),D^\gamma\delta u\ra - \la D^\gamma(\rho^\alpha\delta u\cdot\nabla u),D^\gamma\delta u\ra \\
	&\quad -\lt\la D^\gamma\lt[\frac{\delta\rho}{\rho}(\Delta u-\nabla\pi-\nabla\rho\Delta\rho)\rt],D^\gamma\delta u\rt\ra  \\
	&\quad -\la D^\gamma(\nabla\delta\rho k^2_\alpha \Delta\rho^\alpha),D^\gamma\delta u\ra -\la D^\gamma(\nabla\rho k^2_\alpha \Delta\delta\rho),D^\gamma\delta u\ra -\la D^\gamma(\nabla\rho (k^2_\alpha-I) \Delta\rho),D^\gamma\delta u\ra.
\end{aligned}
\end{align}
On the other hand, we use
\begin{align*}
	\pa_t D^\gamma\delta\rho &= - D^\gamma (u^\alpha\cdot\nabla\delta\rho) - D^\gamma(\delta u\cdot\nabla\rho)\\
	& = -D^\gamma (u^\alpha\cdot\nabla\delta\rho) - (D^\gamma(\delta u\cdot\nabla\rho)-D^\gamma\delta u\cdot\nabla\rho)-D^\gamma\delta u\cdot\nabla\rho
\end{align*}
to obtain
\begin{align*}
	&-\la D^\gamma(\nabla\rho k^2_\alpha\Delta\delta\rho),D^\gamma\delta u\ra \\
	&= -\la D^\gamma(\nabla\rho k^2_\alpha\Delta\delta\rho)-\nabla\rho D^\gamma(k^2_\alpha\Delta\delta\rho),D^\gamma\delta u\ra - \la\nabla\rho D^\gamma(k^2_\alpha\Delta\delta\rho),D^\gamma\delta u\ra\\
	&=-\la D^\gamma(\nabla\rho k^2_\alpha\Delta\delta\rho)-\nabla\rho D^\gamma(k^2_\alpha\Delta\delta\rho),D^\gamma\delta u\ra\\
	&\quad +\la D^\gamma(k^2_\alpha\Delta\delta\rho),D^\gamma\delta\rho_t +D^\gamma(u^\alpha\cdot\nabla\delta\rho)+(D^\gamma(\delta u\cdot\nabla\rho)-D^\gamma\delta u\cdot\nabla\rho)\ra\\
	&=-\la D^\gamma(\nabla\rho k^2_\alpha\Delta\delta\rho)-\nabla\rho D^\gamma(k^2_\alpha\Delta\delta\rho),D^\gamma\delta u\ra\\
	&\quad -\frac{1}{2}\frac{d}{dt}\|D^\gamma k_\alpha\nabla\delta\rho\|_{L^2}^2 +\la D^\gamma(k^2_\alpha\Delta\delta\rho), D^\gamma(u^\alpha\cdot\nabla\delta\rho)+(D^\gamma(\delta u\cdot\nabla\rho)-D^\gamma\delta u\cdot\nabla\rho)\ra.
\end{align*}
Therefore, we further estimate \eqref{est_du} as
\begin{align*}
	&\frac{1}{2}\frac{d}{dt}\left(\int \rho^\alpha |D^\gamma \delta u|^2\,dx+\|D^\gamma k_\alpha\nabla\delta\rho\|_{L^2}^2\right)+\|\nabla D^\gamma\delta u\|_{L^2}^2\\
	&= -\frac{1}{2}\int \dv(\rho^\alpha u^\alpha)|D^\gamma\delta u|^2\,dx -\la D^\gamma(\rho^\alpha \pa_t\delta u)-\rho^\alpha D^\gamma\pa_t\delta u,D^\gamma \delta u\ra\\
	&\quad  -\la D^\gamma(\rho^\alpha u^\alpha\cdot\nabla \delta u), D^\gamma\delta u\ra - \la D^\gamma(\rho^\alpha\delta u\cdot\nabla u),D^\gamma \delta u\ra \\
	&\quad  -\lt\la D^\gamma\lt[\frac{\delta\rho}{\rho}(\Delta u-\nabla\pi-\nabla\rho\Delta\rho)\rt],D^\gamma\delta u\rt\ra  \\
	&\quad -\la D^\gamma(\nabla\delta\rho k^2_\alpha \Delta\rho^\alpha),D^\gamma\delta u\ra -\la D^\gamma(\nabla\rho (k^2_\alpha-I) \Delta\rho),D^\gamma\delta u\ra \\
	&\quad -\la D^\gamma(\nabla\rho k^2_\alpha\Delta\delta\rho)-\nabla\rho D^\gamma(k^2_\alpha\Delta\delta\rho),D^\gamma\delta u\ra\\
	&\quad +\la D^\gamma(k^2_\alpha\Delta\delta\rho), D^\gamma(u^\alpha\cdot\nabla\delta\rho)\ra+\la D^\gamma(k^2_\alpha\Delta\delta\rho),D^\gamma(\delta u\cdot\nabla\rho)-D^\gamma\delta u\cdot\nabla\rho\ra=:\sum_{i=1}^{10} J_i.
\end{align*}

In the following, we estimate the terms $J_i$ one-by-one. We use the uniform boundedness of $H^4\times H^3$-norms of $(\rho^\alpha-\bar{\rho},u^\alpha)$ and $(\rho-\bar{\rho},u)$ to obtain

\begin{align*}
	J_1+J_3 & = -\la D^\gamma(\rho^\alpha u^\alpha\cdot\nabla\delta u)-\rho^\alpha u^\alpha \cdot \nabla D^\gamma\delta u,D^\gamma\delta u\ra \le C\|\nabla(\rho^\alpha u^\alpha)\|_{L^\infty}\|\delta u\|_{H^1}^2 \leq C\|\delta u\|_{H^1}^2,\\
	J_2 &\le C\|\nabla {\rho}^\alpha\|_{L^\infty}\|\pa_t\delta u\|_{L^2}\|\delta u\|_{H^1}\le C\|\pa_t\delta u\|_{L^2}\|\delta u\|_{H^1}\le C(\|\delta u\|_{H^1}^2+\|\pa_t\delta u\|_{L^2}^2),\\	
	J_4&\le C\|\rho^\alpha\delta u\cdot\nabla u\|_{H^1}\|\delta u\|_{H^1}\le C\|\delta u\|_{H^1}^2,\\
	J_5	&= -\lt\la D^\gamma\lt[\delta\rho (\pa_t u+u\cdot\nabla u)\rt],D^\gamma\delta u\rt\ra \\
	&\le C(\|\delta\rho\|_{H^1}\|\pa_t u+u\cdot\nabla u\|_{L^\infty}+\|\delta\rho\|_{L^\infty}\|\pa_t u+u\cdot\nabla u\|_{H^1})\|\delta u\|_{H^1} \\
	&\le C(1+\|\pa_t u\|_{H^2})\|\delta\rho\|_{H^2}\|\delta u\|_{H^1}\le C(1+\|\pa_t u\|_{H^2}^2)(\|\delta\rho\|_{H^2}^2+\|\delta u\|_{H^1}^2),\\
	J_6& \le C(\|\nabla\delta\rho\|_{H^1}\|k_\alpha^2\Delta\rho^\alpha\|_{L^\infty}+\|\nabla\delta\rho\|_{L^4}\|\nabla k_\alpha^2\Delta\rho^\alpha\|_{L^4})\|\delta u\|_{H^1} \le C(\|\delta\rho\|_{H^2}^2+\|\delta u\|_{H^1}^2),\\
	J_{8}&\le C\|\nabla^2\rho\|_{L^\infty}\|k_\alpha^2\Delta\delta\rho\|_{L^2}\|\delta u\|_{H^1}\leq C(\|\delta\rho\|_{H^2}^2+\|\delta u\|_{H^1}^2).
\end{align*}
For $J_7$, we use Lemma \ref{lem:est-k_alpha} to estimate it as
\begin{align*}
	J_7 &\le C\|\nabla\rho\|_{L^\infty}\|(k_\alpha^2-I)\Delta\rho\|_{L^2}\|\delta u\|_{H^2} \\
	&\le C\|(k_\alpha^2-I)\Delta\rho\|_{L^2}^2+C\|\delta u\|_{H^1}^2+\tau\|\nabla \delta u\|_{H^1}^2 \\
	&\le \frac{C}{\alpha^4}\|\rho\|_{H^4}^2+ C\|\delta u\|_{H^1}^2 + \tau\|\nabla \delta u\|_{H^1}^2,
\end{align*}
where $\tau$ is a small positive constant that can be chosen arbitrarily small.
Next, we obtain
\begin{align*}
	J_{9}	&= \la D^\gamma (k^2_\alpha\Delta\delta\rho),D^\gamma(u^\alpha\cdot \nabla\delta\rho)-u^\alpha\cdot D^\gamma \nabla\delta\rho\ra+\la D^\gamma(k^2_\alpha \Delta\delta\rho),u^\alpha\cdot D^\gamma\nabla\delta\rho\ra \\
	&\le C\|\nabla u^\alpha \nabla\delta\rho\|_{H^1}\|k_\alpha^2\Delta\delta\rho\|_{L^2} \\
	&\quad + \la D^\gamma(k^2_\alpha\Delta\delta\rho),u^\alpha\cdot D^\gamma k^2_\alpha\nabla\delta\rho\ra + \la D^\gamma(k^2_\alpha\Delta\delta\rho),u^\alpha\cdot D^\gamma (I-k^2_\alpha)\nabla\delta\rho\ra\\
	&\le C\|\delta\rho\|_{H^2}^2 + C\|\nabla u^\alpha\|_{L^\infty}\|\delta\rho\|_{H^2}^2\le C\|\delta\rho\|_{H^2}^2,
\end{align*}
where we used $(I-k^2_\alpha)=-\frac{1}{\alpha^2}k^2_\alpha\Delta$ to get
\[\la D^\gamma k^2_\alpha \Delta\delta\rho,u^\alpha\cdot D^\gamma(I-k^2_\alpha)\nabla\delta\rho\ra = -\frac{1}{\alpha^2}\la D^\gamma k^2_\alpha \Delta\delta\rho, u^\alpha \cdot\nabla D^\gamma k^2_\alpha \Delta\delta\rho\ra=0.\]
Finally, we estimate $J_{10}$ as
\begin{align*}
	J_{10}&\le C\|\delta\rho\|_{H^2}\|\delta u \nabla^2 \rho\|_{H^1} \le C(\|\delta\rho\|_{H^2}^2+\|\delta u\|_{H^1}^2).
\end{align*}
Therefore, combining all the estimates of $J_{i}$ for $i=1,2,\ldots, 10$, and using the lower and upper bounds of $\rho^\alpha$, we obtain 
\begin{align*}
	&\|\delta u(t)\|_{H^1}^2+\|k_\alpha\nabla\delta\rho(t)\|_{H^1}^2+\int_0^t \|\nabla\delta u\|_{H^1}^2\,ds\\
	&\le C\int_0^t(1+\|\pa_t u\|_{H^2}^2)\left(\|\delta\rho\|_{H^2}^2+\|\delta u\|_{H^1}^2\right)+\|\pa_t\delta u\|_{L^2}^2\,ds + \frac{Ct}{\alpha^4},
\end{align*}
which is the desired estimate.
\end{proof}

To close the $H^1$-norm estimate of $\delta u$, we need to control the term
\[\int_0^t \|\pa_t\delta u\|_{L^2}^2\,ds.\]

\begin{lemma}\label{lem:pat_deltau_L2}
	Let $(\rho^\alpha,u^\alpha)$ be the local solution to \eqref{NSK-relax} and $(\rho,u)$ be the local solution to \eqref{NSK-original}. Then,
	\[	\|\nabla\delta u(t)\|_{L^2}^2+\int^t_0\|\pa_t\delta u\|_{L^2}^2\,ds \le C\int^t_0 (1+\|\pa_t u\|_{H^2}^2)(\|\delta \rho\|^2_{H^2}+\|\delta u\|_{H^1}^2)\,ds+\frac{Ct}{\alpha^4}.\]
\end{lemma}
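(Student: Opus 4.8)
The plan is to test the momentum equation \eqref{INSK-diff}$_2$ directly against $\pa_t\delta u$, without first applying any spatial derivative. Multiplying \eqref{INSK-diff}$_2$ by $\pa_t\delta u$ and integrating over $\R^d$, the viscous term yields $\tfrac12\tfrac{d}{dt}\|\nabla\delta u\|_{L^2}^2$ after integration by parts, the pressure term $\nabla\delta\pi$ drops out because $\dv\pa_t\delta u=0$, and the left-hand side keeps $\int\rho^\alpha|\pa_t\delta u|^2\,dx\ge\rho_m\|\pa_t\delta u\|_{L^2}^2$. The structural point that makes this estimate work is that, since the test function carries no spatial derivative, every occurrence of $\delta\rho$ on the right-hand side of \eqref{INSK-diff}$_2$ enters with at most two derivatives; in particular the worst density term $\nabla\rho\,k_\alpha^2\Delta\delta\rho$ is bounded in $L^2$ simply by $\|\nabla\rho\|_{L^\infty}\|k_\alpha^2\Delta\delta\rho\|_{L^2}\le C\|\delta\rho\|_{H^2}$ using the boundedness of $k_\alpha^2$ on $L^2$ (Lemma \ref{lem:est-k_alpha}), so the commutator/time-derivative manipulation used in Lemma \ref{lem:delta_u_H1} is unnecessary here.

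Next I would treat the group $\tfrac{\delta\rho}{\rho}(\Delta u-\nabla\pi-\nabla\rho\Delta\rho)$ exactly as in the proof of Lemma \ref{lem:delta_u_H1}: by the momentum equation of the target system \eqref{NSK-original} it equals $\delta\rho(\pa_t u+u\cdot\nabla u)$, whose $L^2$-norm is at most $\|\delta\rho\|_{L^2}\|\pa_t u+u\cdot\nabla u\|_{L^\infty}\le C(1+\|\pa_t u\|_{H^2})\|\delta\rho\|_{L^2}$. The convective terms $\rho^\alpha(u^\alpha\cdot\nabla)\delta u$ and $\rho^\alpha(\delta u\cdot\nabla)u$ are bounded in $L^2$ by $C\|\delta u\|_{H^1}$, and the capillary terms $\nabla\delta\rho\,k_\alpha^2\Delta\rho^\alpha$ and $\nabla\rho\,k_\alpha^2\Delta\delta\rho$ by $C\|\delta\rho\|_{H^2}$, in all cases invoking the uniform $H^4\times H^3$ bounds on $(\rho^\alpha-\bar\rho,u^\alpha)$ and $(\rho-\bar\rho,u)$ from Theorem \ref{thm:lwp}, the embedding $H^2\hookrightarrow L^\infty$ valid for $d\le 3$, and Lemma \ref{lem:est-k_alpha} (so that, e.g., $\|k_\alpha^2\Delta\rho^\alpha\|_{L^\infty}\le C\|\rho^\alpha-\bar\rho\|_{H^4}\le C$ uniformly in $\alpha$). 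Pairing each of these against $\pa_t\delta u$ in $L^2$ and using Young's inequality, I can absorb the total of the $\|\pa_t\delta u\|_{L^2}^2$ contributions into $\tfrac{\rho_m}{2}\|\pa_t\delta u\|_{L^2}^2$ on the left, leaving on the right the quantities $C(1+\|\pa_t u\|_{H^2}^2)\|\delta\rho\|_{H^2}^2$ and $C\|\delta u\|_{H^1}^2$.

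The only term producing the $\alpha^{-4}$ remainder is the genuinely nonlocal one, $\nabla\rho(k_\alpha^2-I)\Delta\rho$: by Lemma \ref{lem:est-k_alpha} with $s=0$, $l=2$ we have $\|(k_\alpha^2-I)\Delta\rho\|_{L^2}\le\alpha^{-2}\|\Delta\rho\|_{H^2}\le C\alpha^{-2}\|\rho-\bar\rho\|_{H^4}\le C\alpha^{-2}$, hence $|\la\nabla\rho(k_\alpha^2-I)\Delta\rho,\pa_t\delta u\ra|\le C\alpha^{-2}\|\pa_t\delta u\|_{L^2}\le C\alpha^{-4}+\e\|\pa_t\delta u\|_{L^2}^2$. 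Collecting everything, integrating in time over $[0,t]$, and using $\delta u(0)=0$ (the two systems share initial data, so $\|\nabla\delta u(0)\|_{L^2}=0$) gives the asserted inequality. I do not anticipate a genuine obstacle: this estimate is in fact simpler than Lemma \ref{lem:delta_u_H1}, precisely because the test function $\pa_t\delta u$ has no spatial derivative and the subtle cancellation exploited there is not required; the only care needed is the algebraic rewriting of the $\tfrac{\delta\rho}{\rho}$ terms through \eqref{NSK-original} and the bookkeeping that keeps every density factor at the $H^2$ level while extracting the single $\alpha^{-2}$ gain from the relaxation error.
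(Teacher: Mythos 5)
Your proposal is correct and follows essentially the same route as the paper: testing \eqref{INSK-diff}$_2$ directly against $\pa_t\delta u$, rewriting the $\tfrac{\delta\rho}{\rho}$ group via the target momentum equation, bounding the capillary terms through Lemma \ref{lem:est-k_alpha}, and extracting the $\alpha^{-4}$ remainder from $\nabla\rho(k_\alpha^2-I)\Delta\rho$ with the $s=0$, $l=2$ estimate before absorbing $\|\pa_t\delta u\|_{L^2}^2$ and integrating in time. No substantive differences from the paper's argument.
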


\begin{proof}
We multiply \eqref{INSK-diff}$_2$ by $\pa_t\delta u$ to obtain
\begin{align*}
	&\int \rho^\alpha|\pa_t\delta u|^2\,dx + \frac{1}{2}\frac{d}{dt}\|\nabla\delta u\|_{L^2}^2\\
	&=-\la\rho^\alpha u^\alpha \cdot\nabla\delta u,\pa_t\delta u\ra-\la \rho^\alpha \delta u\cdot\nabla u,\pa_t\delta u\ra-\lt\la\frac{\delta \rho}{\rho}\lt(\Delta u-\nabla\pi-\nabla\rho\Delta\rho\rt),\pa_t\delta u\rt\ra\\
	&\quad -\la\nabla\delta\rho k^2_\alpha \Delta \rho^\alpha ,\pa_t\delta u\ra-\la\nabla\rho k^2_\alpha \Delta \delta\rho,\pa_t\delta u\ra -\la \nabla\rho(k^2_\alpha-I)\Delta\rho,\pa_t\delta u\ra\\
	&\le \frac{\rho_m}{2}\|\pa_t\delta u\|_{L^2}^2+C(1+\|\pa_tu\|_{L^2}^2)(\|\delta \rho\|_{H^2}^2+\|\delta u\|_{H^1}^2) +\frac{C}{\alpha^4},
\end{align*}
where we used Lemma \ref{lem:est-k_alpha} in the last inequality. Using the lower and upper bounds of $\rho^\alpha$, we derive

\[
	\|\pa_t\delta u\|_{L^2}^2+\frac{d}{dt}\|\nabla\delta u\|_{L^2}^2\le C(1+\|\pa_tu\|_{L^2}^2)(\|\delta \rho\|_{H^2}^2+\|\delta u\|_{H^1}^2) +\frac{C}{\alpha^4}.
\]
After integrating with respect to time, we get the desired estimate.
\end{proof}

\subsection{Proof of Theorem \ref{thm:nonlocal_to_local}}
We are now in a position to prove Theorem \ref{thm:nonlocal_to_local}, the nonlocal-to-local convergence. Combining Lemma \ref{lem:delta_rho_H2}, Lemma \ref{lem:delta_u_H1} and Lemma \ref{lem:pat_deltau_L2}, we deduce the following estimate on the difference $(\delta\rho,\delta u)$:
\begin{align*}
	&\|\delta\rho\|_{H^2}^2+\|\delta u\|_{H^1}^2+\|k_\alpha\nabla\delta\rho\|_{H^1}^2+\int_0^t \|\nabla\delta u\|_{H^1}^2\,ds\\
	&\le C\int_0^t(1+\|\pa_t u\|_{H^2}^2)\left(\|\delta\rho\|_{H^2}^2+\|\delta u\|_{H^1}^2\right)\,ds + \frac{Ct}{\alpha^4}.
\end{align*}
Then, Gr\"onwall's inequality yields
\[\sup_{0\le t\le T_0}\left(\|\delta\rho\|_{H^2}^2+\|\delta u\|_{H^1}^2\right)\le \frac{CT_0}{\alpha^4}\exp\left(\int_0^{T_0}(1+\|\pa_tu\|_{H^2}^2)\,ds\right)\le \frac{C(T_0)}{\alpha^4},\]
where we used \eqref{est:limit} in the last inequality. Using the boundedness of $(\delta\rho,\delta u)$ in $H^4(\R^d)\times H^3(\R^d)$ and Lemma \ref{lem:sobolev}, this completes the proof of Theorem \ref{thm:nonlocal_to_local}.

\begin{remark}
	Besides the convergence of $\rho^\alpha$ to $\rho$, we can obtain the convergence of $c^\alpha=k^2_\alpha\rho^\alpha$, which is often called as an ``order parameter" \cite{C16}. Specifically, using Lemma \ref{lem:est-k_alpha}, it holds for any $l\in[0,2]$ that 
	\begin{align*}
		\sup_{0\le t\le T_0}\|c^\alpha(t)-\rho^\alpha(t)\|_{H^{2+l}}^2=\sup_{0\le t\le T_0}\|(k^2_\alpha-I)\rho^\alpha(t)\|_{H^{2+l}}^2\le \frac{1}{\alpha^{4-2l}}\sup_{0\le t\le T_0}\|\rho^\alpha(t)\|_{H^4}^2\le \frac{C(T_0)}{\alpha^{4-2l}},
	\end{align*}
	and consequently,
	\begin{align*}
		\sup_{0\le t\le T_0}\|c^\alpha(t)-\rho(t)\|_{H^{2+l}}^2&\le 2\sup_{0\le t\le T_0}\left(\|c^\alpha(t)-\rho^\alpha(t)\|^2_{H^{2+l}}+\|\rho^\alpha(t)-\rho(t)\|_{H^{2+l}}^2\right) \le \frac{C_l(T_0)}{\alpha^{4-2l}}.
	\end{align*}
\end{remark}


\section{Convergence towards the incompressible Navier--Stokes equations}\label{sec:5}
In this section, we consider the vanishing capillarity limit and prove Theorem \ref{thm:vanishing_capillarity}. Let us fix $\alpha$ and $(\rho^\kappa,u^\kappa,\pi^\kappa)$ be the solution to \eqref{NSK-relax} with capillarity $\kappa$. Moreover, let $(\rho,u,\pi)$ be the solution to the inhomogeneous incompressible Navier-Stokes equations \eqref{NS}. Using a similar iterative method, one can easily find that the incompressible Navier-Stokes equations \eqref{NS} admits a unique solution $(\rho-\bar{\rho},u)\in C([0,T_0];H^3(\R^d))\times C([0,T_0];H^3(\R^d))$ such that
\begin{equation}\label{est:limit-capillarity}
	\|u\|_{H^3}^2+\|\tilde{\rho}\|_{H^3}^2+\int_0^t\|\nabla u\|_{H^3}^2+\|\pa_tu\|_{H^2}^2\,ds\le C.
\end{equation}

To prove the vanishing capillarity limit, we again take differences $\delta\rho :=\rho^\kappa-\rho$, $\delta u = u^\kappa-u$, and $\delta\pi = \pi^\kappa-\pi$. Then, we subtract \eqref{NSK-relax} by \eqref{NS} to obtain the equations for $(\delta\rho,\delta u,\delta \pi)$ as

\begin{align}
\begin{aligned}\label{eq:delta_rho_u}
	&\pa_t\delta\rho +\delta u\cdot\nabla\rho^\kappa+u\cdot\nabla\delta\rho=0,\\
	&\rho^\kappa[\pa_t \delta u + (u^\kappa\cdot\nabla)\delta u +(\delta u \cdot\nabla)u] -\Delta \delta u +\frac{\delta\rho}{\rho}\Delta u + \nabla\delta \pi -\frac{\delta\rho}{\rho}\nabla\pi=-\kappa\nabla\rho^\kappa k^2_\alpha\Delta\rho^\kappa.
\end{aligned}
\end{align}

We first obtain the $H^3$-estimate for $\delta\rho$.
\begin{lemma}\label{lem:delta_rho_H3_capillarity}
	Let $(\rho^\kappa,u^\kappa)$ be the local solution to \eqref{NSK-relax} and $(\rho,u)$ be the local solution to \eqref{NS}. Then, it holds that
\begin{equation}\label{delta_rho_H3_capillarity}
	\frac{d}{dt}\|\delta\rho\|_{H^3}^2\le C(\|\delta\rho\|_{H^3}^2+\|\delta u\|_{H^3}^2).
\end{equation}
\end{lemma}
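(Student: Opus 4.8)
The plan is to carry out a straightforward $H^3$ energy estimate on the transport equation \eqref{eq:delta_rho_u}$_1$ for $\delta\rho$, using the divergence-free condition on $u$ together with the uniform-in-$\kappa$ bounds for $(\rho^\kappa,u^\kappa)$ coming from Theorem~\ref{thm:lwp} and the fixed bound \eqref{est:limit-capillarity} for the Navier--Stokes solution $(\rho,u)$.

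First I would apply $D^\gamma$ with $|\gamma|\le 3$ to $\pa_t\delta\rho+\delta u\cdot\nabla\rho^\kappa+u\cdot\nabla\delta\rho=0$ and take the $L^2$-inner product with $D^\gamma\delta\rho$, which gives
\[
\frac{1}{2}\frac{d}{dt}\|D^\gamma\delta\rho\|_{L^2}^2
= -\la D^\gamma(\delta u\cdot\nabla\rho^\kappa),D^\gamma\delta\rho\ra
-\la D^\gamma(u\cdot\nabla\delta\rho)-u\cdot\nabla D^\gamma\delta\rho,D^\gamma\delta\rho\ra,
\]
since the remaining ``diagonal'' term $\la u\cdot\nabla D^\gamma\delta\rho,D^\gamma\delta\rho\ra=\frac12\int(\dv u)|D^\gamma\delta\rho|^2=0$ because $\dv u=0$. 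For the commutator term I would invoke Lemma~\ref{lem:commutator} with $f=u$, $g=\nabla\delta\rho$, $s=3$, together with the Sobolev embedding $H^2\hookrightarrow L^\infty$ (valid for $d=2,3$), to obtain
\[
\sum_{|\gamma|\le3}\|D^\gamma(u\cdot\nabla\delta\rho)-u\cdot D^\gamma\nabla\delta\rho\|_{L^2}
\le C\big(\|u\|_{H^3}\|\nabla\delta\rho\|_{L^\infty}+\|\nabla u\|_{L^\infty}\|\nabla\delta\rho\|_{H^2}\big)
\le C\|u\|_{H^3}\|\delta\rho\|_{H^3}.
\]
For the first term I would apply the product estimate of Lemma~\ref{lem:commutator} with $f=\delta u$, $g=\nabla\rho^\kappa$, giving $\sum_{|\gamma|\le3}\|D^\gamma(\delta u\cdot\nabla\rho^\kappa)\|_{L^2}\le C(\|\delta u\|_{H^3}\|\nabla\rho^\kappa\|_{L^\infty}+\|\delta u\|_{L^\infty}\|\nabla\rho^\kappa\|_{H^3})\le C\|\tilde\rho^\kappa\|_{H^4}\|\delta u\|_{H^3}$.

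Summing over $|\gamma|\le 3$, absorbing the uniform bounds $\|u\|_{H^3}\le C$ from \eqref{est:limit-capillarity} and $\|\tilde\rho^\kappa\|_{H^4}\le C$ from Theorem~\ref{thm:lwp} (which holds uniformly in $\kappa$), and applying Young's inequality to the cross term $\|\delta u\|_{H^3}\|\delta\rho\|_{H^3}$, one arrives at $\frac{d}{dt}\|\delta\rho\|_{H^3}^2\le C(\|\delta\rho\|_{H^3}^2+\|\delta u\|_{H^3}^2)$, which is \eqref{delta_rho_H3_capillarity}.

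This step is essentially routine; there is no genuine obstacle. The only point requiring attention is bookkeeping which norms are controlled \emph{uniformly in $\kappa$}: one must use the $\kappa$-independent existence time and bounds of Theorem~\ref{thm:lwp} for $\rho^\kappa$ (not any $\kappa$-dependent quantity), and the fixed bound \eqref{est:limit-capillarity} for the limit. The real difficulty of the vanishing-capillarity analysis is postponed to the subsequent $H^3$-estimate for $\delta u$, where the capillarity term $-\kappa\nabla\rho^\kappa k_\alpha^2\Delta\rho^\kappa$ must be paired against $\pa_t\delta u$ and shown to contribute only an $O(\kappa^2)$ error, using $\|\nabla k_\alpha^2\rho^\kappa\|_{H^3}\le\alpha\|\rho^\kappa\|_{H^3}$ from Lemma~\ref{lem:est-k_alpha} with $\alpha$ now fixed.
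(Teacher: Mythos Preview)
Your proof is correct and follows essentially the same approach as the paper: apply $D^\gamma$ to the transport equation for $\delta\rho$, use $\dv u=0$ to kill the diagonal term, and control the commutator and product terms via Lemma~\ref{lem:commutator} together with the uniform-in-$\kappa$ bounds on $\rho^\kappa$ and $u$. The paper's write-up is terser but the argument is identical.
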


\begin{proof}
We apply $D^\gamma$ with $|\gamma|\le 3$ to \eqref{eq:delta_rho_u}$_1$ and take inner product with $D^\gamma\delta\rho$ to obtain
	\begin{align*}
	\frac{1}{2}\frac{d}{dt}\|D^\gamma \delta\rho\|_{H^3}^2 	&=-\la D^\gamma(\delta u\cdot\nabla\rho^\kappa),D^\gamma\delta\rho\ra-\la D^\gamma(u\cdot\nabla\delta\rho)-u\cdot\nabla D^\gamma\delta\rho,D^\gamma \delta\rho\ra \\
	&\le C\|\delta u\|_{H^3}\|\delta\rho\|_{H^3}+C\|\delta\rho\|_{H^3}^2\le C(\|\delta\rho\|_{H^3}^2+\|\delta u\|_{H^3}^2),
	\end{align*}
	where we used the uniform bound of $\rho^\kappa$ and $u$. Summing over all $|\gamma|\le 3$, we obtain the desired estimate.
\end{proof}

Next, we obtain the $H^3$-estimate for $\delta u$. 
\begin{lemma}\label{lem:delta_u_H3_capillarity}
	Let $(\rho^\kappa,u^\kappa)$ be the local solution to \eqref{NSK-relax} and $(\rho,u)$ be the local solution to \eqref{NS}. Then, it holds that
	\begin{align*}
		\|\delta u(t)\|_{H^3}^2+\int_0^t \|\nabla\delta u\|_{H^3}^2\,ds\le C\int_0^t(1+\|\pa_t u\|_{H^2}^2)\left(\|\delta\rho\|_{H^3}^2+\|\delta u\|_{H^3}^2\right)+\|\pa_t\delta u\|_{H^2}^2\,ds + C\kappa^2t.
	\end{align*}
\end{lemma}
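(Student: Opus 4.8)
\emph{Proof proposal.} The plan is to run an $H^3$ Sobolev energy estimate on the momentum equation \eqref{eq:delta_rho_u}$_2$ for $\delta u$, one order higher than but structurally parallel to the proof of Lemma \ref{lem:delta_u_H1}. Apply $D^\gamma$ with $|\gamma|\le 3$ to \eqref{eq:delta_rho_u}$_2$ and take the $L^2$-inner product with $D^\gamma\delta u$; the viscous term yields $\|\nabla D^\gamma\delta u\|_{L^2}^2$ on the left, while the pressure term $\la D^\gamma\nabla\delta\pi,D^\gamma\delta u\ra$ vanishes because $\dv\delta u=0$. From the acceleration term I would extract $\frac{1}{2}\frac{d}{dt}\int\rho^\kappa|D^\gamma\delta u|^2\,dx$, controlling the leftover $\frac12\int\pa_t\rho^\kappa\,|D^\gamma\delta u|^2\,dx$ by the transport identity $\pa_t\rho^\kappa=-u^\kappa\cdot\nabla\rho^\kappa$, and bounding the commutator $D^\gamma(\rho^\kappa\pa_t\delta u)-\rho^\kappa D^\gamma\pa_t\delta u$ via Lemma \ref{lem:commutator} by $C\|\tilde{\rho}^\kappa\|_{H^3}\|\pa_t\delta u\|_{H^2}$; paired with $D^\gamma\delta u$ and Young's inequality this is exactly the source of the $\int_0^t\|\pa_t\delta u\|_{H^2}^2\,ds$ term on the right-hand side (which is subsequently controlled by an $H^2$-in-time estimate for $\pa_t\delta u$ of the same type as Lemma \ref{lem:pat_deltau_L2}, and absorbed through Gr\"onwall together with Lemma \ref{lem:delta_rho_H3_capillarity}). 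The convective contributions $\rho^\kappa(u^\kappa\cdot\nabla)\delta u$ and $\rho^\kappa(\delta u\cdot\nabla)u$ are handled by a commutator estimate together with the $-\frac12\int\dv(\rho^\kappa u^\kappa)|D^\gamma\delta u|^2\,dx$ identity, giving a bound $C\|\delta u\|_{H^3}^2$ from the uniform $H^4\times H^3$ bounds on $(\rho^\kappa-\bar{\rho},u^\kappa)$ and $(\rho-\bar{\rho},u)$.

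The capillary forcing is easy here because $\alpha$ is fixed: $k_\alpha^2\Delta$ has the bounded symbol $-\alpha^2|\xi|^2/(\alpha^2+|\xi|^2)$, so $\|k_\alpha^2\Delta\rho^\kappa\|_{H^3}\le\alpha^2\|\rho^\kappa\|_{H^3}$, and together with Lemma \ref{lem:commutator} and the uniform bound on $\|\rho^\kappa-\bar{\rho}\|_{H^4}$ this gives $\|D^\gamma(\kappa\nabla\rho^\kappa k_\alpha^2\Delta\rho^\kappa)\|_{L^2}\le C(\alpha)\kappa$; pairing with $D^\gamma\delta u$ and applying Young's inequality then produces, after integration in time, the $C\kappa^2 t$ term (with $C$ allowed to depend on the fixed $\alpha$).

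The delicate step is the coupling term $D^\gamma\!\big(\tfrac{\delta\rho}{\rho}\Delta u-\tfrac{\delta\rho}{\rho}\nabla\pi\big)$: the target solution $(\rho-\bar{\rho},u)$ of \eqref{NS} has only $H^3$ regularity (and $\pa_t u$ only $H^2$), so we cannot afford three derivatives landing on $\Delta u$ or $\nabla\pi$. The remedy is to use the momentum equation of \eqref{NS} itself, namely $\tfrac1\rho(\Delta u-\nabla\pi)=\pa_t u+(u\cdot\nabla)u$, so that the term becomes $-\la D^\gamma[\delta\rho(\pa_t u+(u\cdot\nabla)u)],D^\gamma\delta u\ra$. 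Expanding by the Leibniz rule, in every term where $\delta\rho$ carries at least one derivative the companion factor $D^{\gamma'}(\pa_t u+(u\cdot\nabla)u)$ has $|\gamma'|\le2$ and is therefore controlled in $L^2$, $L^4$ or $L^\infty$ by $C(1+\|\pa_t u\|_{H^2}+\|u\|_{H^3}^2)$ via Lemma \ref{lem:sobolev}, yielding the bound $C(1+\|\pa_t u\|_{H^2}^2)(\|\delta\rho\|_{H^3}^2+\|\delta u\|_{H^3}^2)$. In the single remaining term $\la\delta\rho\, D^\gamma(\pa_t u+(u\cdot\nabla)u),D^\gamma\delta u\ra$ (top order $|\gamma|=3$) one writes $D^\gamma=\pa_m D^{\gamma''}$ with $|\gamma''|=2$ and integrates by parts in $x_m$; the derivative then falls either on $\delta\rho$, producing $\|\nabla\delta\rho\|_{L^\infty}\|D^{\gamma''}(\pa_t u+(u\cdot\nabla)u)\|_{L^2}\|\delta u\|_{H^3}$ which is controlled since $|\gamma''|\le2$, or on $D^\gamma\delta u$, producing $\|\delta\rho\|_{L^\infty}\|D^{\gamma''}(\pa_t u+(u\cdot\nabla)u)\|_{L^2}\|\nabla D^\gamma\delta u\|_{L^2}$ which Young's inequality absorbs into the viscous dissipation up to an arbitrarily small constant $\tau$. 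The term $\rho^\kappa(\delta u\cdot\nabla)u$ is handled identically: one integration by parts is used on the highest-order piece $\rho^\kappa\delta u\cdot D^\gamma\nabla u$ to avoid requiring $u\in H^4$.

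Summing over $|\gamma|\le3$, integrating in time, using $\rho_m\le\rho^\kappa\le\rho_M$ together with $\delta u(0)=u_0-u_0=0$, and absorbing the $\tau\int_0^t\|\nabla\delta u\|_{H^3}^2\,ds$ contributions into the left-hand side yields the claimed inequality. I expect the main obstacle to be precisely the low regularity of the limiting Navier--Stokes solution in the coupling terms; the two ideas that resolve it are (i) trading $\Delta u-\nabla\pi$ for $\pa_t u+(u\cdot\nabla)u$ through the target equation, and (ii) one integration by parts in the top-order contribution, which converts an inadmissible derivative of $u$ into a derivative of $\delta u$ that the parabolic dissipation can absorb.
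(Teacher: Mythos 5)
Your proposal is correct and follows essentially the same route as the paper: the same $H^3$ energy estimate with $D^\gamma\delta u$, the same substitution of the Navier--Stokes momentum equation to replace $\tfrac{\delta\rho}{\rho}(\Delta u-\nabla\pi)$ by $\delta\rho(\pa_t u+u\cdot\nabla u)$, and the same shift of one derivative onto $\delta u$ (absorbed by the dissipation via a small constant $\tau$) to cope with the limited $H^3$ regularity of $(\rho,u)$, with the $\int_0^t\|\pa_t\delta u\|_{H^2}^2$ term kept on the right exactly as in the statement. The only cosmetic difference is the capillary term: you bound $\|k_\alpha^2\Delta\rho^\kappa\|_{H^3}$ by the symbol bound $\alpha^2\|\tilde\rho^\kappa\|_{H^3}$, giving a constant $C(\alpha)$, whereas the paper also moves one derivative onto $\delta u$ and uses $\|k_\alpha^2\Delta\rho^\kappa\|_{H^2}\le\|\tilde\rho^\kappa\|_{H^4}$, so its constant is $\alpha$-independent; since $\alpha$ is fixed in this section, both are admissible.
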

\begin{proof}
	For any multiindex $\gamma$ with $|\gamma|\le 3$, we take $D^\gamma$ to \eqref{eq:delta_rho_u}$_2$ and take $L^2$-inner product with $D^\gamma\delta u$ to obtain
	\begin{align*}
		&\frac{1}{2}\frac{d}{dt}\int \rho^\kappa |D^\gamma \delta u|^2\,dx+\|\nabla D^\gamma\delta u\|_{L^2}^2\\
		&= -\frac{1}{2}\int \dv(\rho^\kappa u^\kappa)|D^\gamma\delta u|^2\,dx -\la D^\gamma(\rho^\kappa \pa_t\delta u)-\rho^\kappa D^\gamma\pa_t\delta u,D^\gamma \delta u\ra\\
		&\quad - \la D^\gamma(\rho^\kappa u^\kappa\cdot\nabla \delta u),D^\gamma\delta u\ra - \la D^\gamma(\rho^\kappa\delta u\cdot\nabla u),D^\gamma\delta u\ra \\
		&\quad -\lt\la D^\gamma\lt(\frac{\delta\rho}{\rho}\Delta u\rt),D^\gamma\delta u\rt\ra  + \lt\la D^\gamma\lt(\frac{\delta\rho}{\rho}\nabla\pi\rt),D^\gamma\delta u\rt\ra\\
		&\quad -\kappa\la D^\gamma(\nabla\rho^\kappa k^2_\alpha\Delta\rho^\kappa),D^\gamma \delta u\ra=:\sum_{i=1}^7 J_i.
	\end{align*}
We estimate the terms $J_i$ one-by-one. We use the uniform boundedness of $(\rho^\kappa,u^\kappa)$ and $(\rho,u)$ to obtain
	\begin{align*}
		J_1+J_3 &= -\la D^\gamma(\rho^\kappa u^\kappa\cdot\nabla\delta u)-\rho^\kappa u^\kappa\cdot\nabla D^\gamma \delta u,D^\gamma\delta u\ra \\
		&\le C(\|\rho^\kappa u^\kappa\|_{H^3}\|\nabla\delta u\|_{L^\infty}+\|\nabla(\rho^\kappa u^\kappa)\|_{L^\infty}\|\nabla\delta u\|_{H^2})\|\delta u\|_{H^3} \le C\|\delta u\|_{H^3}^2,\\
		J_2 &\le C(\|\rho^\kappa\|_{H^3}\|\pa_t\delta u\|_{L^\infty}+\|\nabla\rho^\kappa\|_{L^\infty}\|\pa_t\delta u\|_{H^2})\|\delta u\|_{H^3} \\
		&\le C(\|\delta u\|_{H^3}^2+\|\pa_t\delta u\|_{H^2}^2),\\
		J_4&\le C\|\rho^\kappa\delta u\cdot\nabla u\|_{H^2}(\|\delta u\|_{H^3}+\|\nabla \delta u\|_{H^3}) \\
		& \le \tau\|\nabla\delta u\|_{H^3}^2+  C\|\delta u\|_{H^3}^2,\\
		J_5+J_6 &=-\la D^\gamma(\delta\rho(\pa_t u+u\cdot\nabla u)),D^\gamma \delta u\ra \\
		&\le C\|\delta\rho(\pa_t u+u\cdot\nabla u)\|_{H^2}(\|\delta u\|_{H^3}+\|\nabla\delta u\|_{H^3}) \\
		&\le \tau\|\nabla \delta u\|_{H^3}^2+C(1+\|\pa_t u\|_{H^2}^2)(\|\delta\rho\|_{H^3}^2+\|\delta u\|_{H^3}^2).
	\end{align*}
	Finally, we estimate the term $J_7$ as
	\begin{align*}
		J_7\le \kappa \|\nabla\rho^\kappa k_\alpha^2\Delta\rho^\kappa\|_{H^2}(\|\delta u\|_{H^3}+\|\nabla\delta u\|_{H^3}) \le \tau\|\nabla\delta u\|_{H^3}^2+C\|\delta u\|_{H^3}^2+C\kappa^2,
	\end{align*}
	where we used Lemma \ref{lem:est-k_alpha} and the boundedness of $\rho^\kappa$. Therefore, combining all the estimates and using the lower and upper bound of $\rho^\alpha$, we obtain 
	\begin{align*}
		\|\delta u\|_{H^3}^2+\int_0^t \|\nabla\delta u\|_{H^3}^2\,ds\le C\int_0^t(1+\|\pa_t u\|_{H^2}^2)\left(\|\delta\rho\|_{H^3}^2+\|\delta u\|_{H^3}^2\right)+\|\pa_t\delta u\|_{H^2}^2\,ds + C\kappa^2t.
	\end{align*}
\end{proof}


\begin{lemma}\label{lem:pat_deltau_H2_capillarity}
	Let $(\rho^\kappa,u^\kappa)$ be the local solution to \eqref{NSK-relax} and $(\rho,u)$ be the local solution to \eqref{NS}. Then,
	\[ \|\nabla\delta u(t)\|_{H^2}^2+\int^t_0\|\pa_t\delta u\|_{H^2}^2\,ds \le C\int^t_0(1+\|\pa_t u\|_{H^2}^2)(\|\delta \rho\|^2_{H^3}+\|\delta u\|_{H^3}^2)\,ds+C\kappa^2t.\]
\end{lemma}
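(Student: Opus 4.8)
The plan is to carry out an $H^2$-in-space energy estimate for $\pa_t\delta u$ by testing the differentiated difference equation \eqref{eq:delta_rho_u}$_2$ against $D^\gamma\pa_t\delta u$, exactly in the spirit of Lemma \ref{lem:patu_H2} and Lemma \ref{lem:delta_u_H3_capillarity}, and --- this is the point that makes the statement self-contained --- to harvest the missing $L^2$-in-time control of $\pa_t\delta u$ from the $|\gamma|=0$ instance of the same identity. First I would apply $D^\gamma$ with $|\gamma|\le 2$ to \eqref{eq:delta_rho_u}$_2$, take the $L^2$-inner product with $D^\gamma\pa_t\delta u$, use $\dv\pa_t\delta u=0$ to annihilate the pressure term, integrate the viscous term by parts, and commute $\rho^\kappa$ under the leading quadratic term to reach
\[\int\rho^\kappa|D^\gamma\pa_t\delta u|^2\,dx+\frac{1}{2}\frac{d}{dt}\|\nabla D^\gamma\delta u\|_{L^2}^2=\sum_i J_i,\]
where, after rewriting $\frac{\delta\rho}{\rho}(\Delta u-\nabla\pi)=\delta\rho(\pa_t u+u\cdot\nabla u)$ via the momentum equation of \eqref{NS}, the $J_i$ consist of a commutator $-\la D^\gamma(\rho^\kappa\pa_t\delta u)-\rho^\kappa D^\gamma\pa_t\delta u,D^\gamma\pa_t\delta u\ra$, the two transport terms $-\la D^\gamma(\rho^\kappa u^\kappa\cdot\nabla\delta u),D^\gamma\pa_t\delta u\ra$ and $-\la D^\gamma(\rho^\kappa\delta u\cdot\nabla u),D^\gamma\pa_t\delta u\ra$, the source term $-\la D^\gamma(\delta\rho(\pa_t u+u\cdot\nabla u)),D^\gamma\pa_t\delta u\ra$, and the capillarity term $-\kappa\la D^\gamma(\nabla\rho^\kappa k_\alpha^2\Delta\rho^\kappa),D^\gamma\pa_t\delta u\ra$.

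For the transport and source terms I would use the product estimate of Lemma \ref{lem:commutator}, the embedding $H^2\hookrightarrow L^\infty$ valid for $d\le 3$, the uniform bounds \eqref{est:limit-capillarity} for $(\rho,u)$, and the $\kappa$-independent bound $\|\rho^\kappa-\bar\rho\|_{H^4}+\|u^\kappa\|_{H^3}\le C$ from Theorem \ref{thm:lwp}, to obtain, for an arbitrarily small $\tau>0$, a bound $\tau\|\pa_t\delta u\|_{H^2}^2+C(1+\|\pa_t u\|_{H^2}^2)(\|\delta\rho\|_{H^3}^2+\|\delta u\|_{H^3}^2)$; this is where the factor $1+\|\pa_t u\|_{H^2}^2$ enters, through $\|\pa_t u+u\cdot\nabla u\|_{L^\infty}\le C(1+\|\pa_t u\|_{H^2})$. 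For the capillarity term, the boundedness of $k_\alpha$ on Sobolev spaces (Lemma \ref{lem:est-k_alpha}) gives $\|\nabla\rho^\kappa k_\alpha^2\Delta\rho^\kappa\|_{H^2}\le C$ uniformly in $\alpha$ and $\kappa$, so this term is at most $\tau\|\pa_t\delta u\|_{H^2}^2+C\kappa^2$, which produces the $C\kappa^2 t$ on the right after integration in time.

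The step I expect to be the main obstacle is the commutator term. A Leibniz expansion of $D^\gamma(\rho^\kappa\pa_t\delta u)-\rho^\kappa D^\gamma\pa_t\delta u$ leaves, for $|\gamma|=2$, the borderline pieces $(\nabla^2\rho^\kappa)\pa_t\delta u$ and $(\nabla\rho^\kappa)(\nabla\pa_t\delta u)$; the first I would handle using $\nabla^2\rho^\kappa\in H^2\hookrightarrow L^\infty$ (uniformly in $\kappa$ by Theorem \ref{thm:lwp}), and the second by integrating by parts once inside the $L^2$-inner product so that one derivative is shifted from $\pa_t\delta u$ onto $\rho^\kappa$, turning it into a term dominated by $\|\nabla^2\rho^\kappa\|_{L^\infty}\|\nabla\pa_t\delta u\|_{L^2}^2$. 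Altogether the commutator is controlled by $C\|\pa_t\delta u\|_{L^2}\|\pa_t\delta u\|_{H^2}+C\|\pa_t\delta u\|_{H^1}^2$, and after the interpolation $\|\pa_t\delta u\|_{H^1}^2\le\|\pa_t\delta u\|_{L^2}\|\pa_t\delta u\|_{H^2}$ and Young's inequality it becomes $\tau\|\pa_t\delta u\|_{H^2}^2+C\|\pa_t\delta u\|_{L^2}^2$. The genuinely delicate point is the residual $C\|\pa_t\delta u\|_{L^2}^2$: it can be absorbed neither by the viscous dissipation $\|\nabla D^\gamma\delta u\|_{L^2}^2$ nor, with a sharp constant, by $\rho_m\|\pa_t\delta u\|_{L^2}^2$ from the left.

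To dispose of it I would look back at the $|\gamma|=0$ case of the identity, which carries no commutator; estimating its right-hand side by the four non-commutator bounds above (with $\|\pa_t\delta u\|_{H^2}$ there replaced by $\|\pa_t\delta u\|_{L^2}$), using $\rho^\kappa\ge\rho_m>0$ and $\delta u(0)=0$, and integrating in time yields
\[\int_0^t\|\pa_t\delta u\|_{L^2}^2\,ds+\|\nabla\delta u(t)\|_{L^2}^2\le C\int_0^t(1+\|\pa_t u\|_{H^2}^2)(\|\delta\rho\|_{H^3}^2+\|\delta u\|_{H^3}^2)\,ds+C\kappa^2 t,\]
which is the vanishing-capillarity analogue of Lemma \ref{lem:pat_deltau_L2}. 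Finally, summing the full identity over $|\gamma|\le 2$, choosing $\tau$ small relative to $\rho_m$ so that every $\tau\|\pa_t\delta u\|_{H^2}^2$ is absorbed into $\rho_m\|\pa_t\delta u\|_{H^2}^2$ on the left, integrating in time using $\delta u(0)=0$ and the lower and upper bounds on $\rho^\kappa$, and then inserting the last displayed estimate to eliminate $\int_0^t\|\pa_t\delta u\|_{L^2}^2\,ds$, gives exactly the asserted bound on $\|\nabla\delta u(t)\|_{H^2}^2+\int_0^t\|\pa_t\delta u\|_{H^2}^2\,ds$.
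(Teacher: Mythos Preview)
Your proposal is correct and follows essentially the same route as the paper: test $D^\gamma$ of \eqref{eq:delta_rho_u}$_2$ against $D^\gamma\pa_t\delta u$ for $|\gamma|\le 2$, rewrite the pressure-density terms via the momentum equation of \eqref{NS}, bound the commutator by $\tau\|\pa_t\delta u\|_{H^2}^2+C\|\pa_t\delta u\|_{L^2}^2$ through interpolation and Young, and close by feeding in the $|\gamma|=0$ estimate to eliminate $\int_0^t\|\pa_t\delta u\|_{L^2}^2\,ds$. The only minor variation is that for the $(\nabla\rho^\kappa)(\nabla\pa_t\delta u)$ piece of the commutator you integrate by parts to reach $\|\nabla^2\rho^\kappa\|_{L^\infty}\|\nabla\pa_t\delta u\|_{L^2}^2$, whereas the paper applies H\"older directly and then interpolates $\|\pa_t\delta u\|_{H^1}\le\|\pa_t\delta u\|_{L^2}^{1/2}\|\pa_t\delta u\|_{H^2}^{1/2}$; both yield the same bound.
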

\begin{proof}
	The proof follows the similar argument as in Lemma \ref{lem:pat_deltau_L2}. For the $L^2$-estimate, we have
	\begin{align*}
		&\int \rho^\kappa|\pa_t\delta u|^2\,dx + \frac{1}{2}\frac{d}{dt}\|\nabla\delta u\|_{L^2}^2\\
		&=-\la\rho^\kappa u^\kappa \cdot\nabla\delta u,\pa_t\delta u\ra-\la \rho^\kappa \delta u\cdot\nabla u,\pa_t\delta u\ra-\lt\la\frac{\delta \rho}{\rho}\Delta u,\pa_t\delta u\rt\ra+\lt\la\frac{\delta\rho}{\rho}\nabla\pi,\pa_t\delta u\rt\ra\\
		&\quad -\kappa\la \nabla\rho^\kappa k^2_\alpha\Delta\rho^\kappa, \pa_t\delta u\ra\\
		&\le \frac{\rho_m}{2}\|\pa_t\delta u\|_{L^2}^2+C(1+\|\pa_tu\|_{H^2}^2)(\|\delta u\|_{H^3}^2+\|\delta\rho\|_{H^3}^2) +C\kappa^2.
	\end{align*}
	where we used Lemma \ref{lem:est-k_alpha} and the uniform boundedness of $\rho^\kappa$ in the last inequality. Using the lower and upper bounds of $\rho^\kappa$, we have
	
	\begin{equation}\label{est:pat_deltau-L2-capillarity}
		\|\pa_t\delta u\|_{L^2}^2+\frac{d}{dt}\|\nabla\delta u\|_{L^2}^2\le C(1+\|\pa_tu\|_{H^2}^2)(\|\delta u\|_{H^3}^2+\|\delta\rho\|_{H^3}^2) +C\kappa^2.
	\end{equation}
	To derive $H^2$-estimate, we take $D^\gamma$ for $|\gamma|\le 2$ to \eqref{eq:delta_rho_u}$_2$ and take $L^2$-inner product with $D^\gamma\pa_t\delta u$ to obtain
	\begin{align*}
		&\int \rho^\kappa |\pa_t D^\gamma \delta u|^2\,dx +\frac{1}{2}\frac{d}{dt}\|D^\gamma \nabla \delta u\|_{L^2}^2\\
		& = -\la D^\gamma(\rho^\kappa u^\kappa \cdot\nabla \delta u),D^\gamma\pa_t\delta u\ra-\la D^\gamma(\rho^\kappa\pa_t\delta u)-\rho^\kappa D^\gamma \pa_t\delta u,D^\gamma\pa_t \delta u\ra\\
		&\quad -\la D^\gamma(\rho^\kappa \delta u\cdot\nabla u),D^\gamma\pa_t\delta u\ra -\lt\la D^\gamma\lt(\frac{\delta \rho}{\rho}\Delta u\rt),D^\gamma\pa_t\delta u\rt\ra+\lt\la D^\gamma\lt(\frac{\delta \rho}{\rho}\nabla\pi\rt),D^\gamma\pa_t\delta u\rt\ra\\
		&\quad -\la \kappa D^\gamma(\nabla\rho^\kappa k^2_\alpha\Delta \rho^\kappa),D^\gamma\pa_t\delta u\ra\\
		&=:\sum_{i=1}^6 K_i
	\end{align*}
	We obtain
	\begin{align*}
		K_1 &\le \tau\|\pa_t\delta u\|_{H^2}^2 + C\|\delta u\|_{H^3}^2,\\
		K_2&\le C\|\nabla \rho^\kappa\|_{L^\infty}\|\pa_t\delta u\|_{L^2}\|\pa_t\delta u\|_{H^1} + C\lt(\|\nabla^2\rho^\kappa\|_{L^\infty}\|\pa_t\delta u\|_{L^2}+\|\nabla\rho^\kappa\|_{L^\infty}\|\nabla\delta u\|_{L^2}\rt)\|\pa_t\delta u\|_{H^2} \\
		&\le C\|\pa_t\delta u\|_{L^2}^{\frac{1}{2}}\|\pa_t \delta u\|_{H^2}^{\frac{3}{2}}\le \tau \|\pa_t\delta u\|_{H^2}^2 + C\|\pa_t\delta u\|_{L^2}^2,\\
		K_3& \le \tau\|\pa_t\delta u\|_{H^2}^2 +C\|\delta u\|_{H^3}^2,\\
		K_4+	K_5 &= -\lt\la D^\gamma\lt(\delta\rho(\pa_t u+u\cdot\nabla u)\rt),D^\gamma\pa_t\delta u\rt\ra \\
		&\le \tau\|\pa_t\delta u\|_{H^2}^2+ C(1+\|\pa_tu\|_{H^2}^2)\|\delta\rho\|_{H^3}^2,
	\end{align*}
	where $\tau$ is small enough constant. Finally, $K_6$ can be directly estimated as
	\[K_6 \le \tau\|\pa_t\delta u\|_{H^2}^2 + C\kappa^2,\]
	thanks to the uniform boundedness of $\rho^\kappa$. Thus, we obtain
	\begin{equation}\label{est:pat_deltau-H2-capillarity}
		\frac{d}{dt}\|\nabla\delta u\|_{H^2}^2+\|\pa_t\delta u\|_{H^2}^2\le C(1+\|\pa_t u\|_{H^2}^2)(\|\delta\rho\|_{H^3}^2+\|\delta u\|_{H^3}^2)+ C\|\pa_t\delta u\|_{L^2}^2+C\kappa^2.
	\end{equation}
	We collect \eqref{est:pat_deltau-L2-capillarity} and \eqref{est:pat_deltau-H2-capillarity} to derive the desired estimate.
\end{proof}

Combining Lemma \ref{lem:delta_rho_H3_capillarity}, Lemma \ref{lem:delta_u_H3_capillarity} and Lemma \ref{lem:pat_deltau_H2_capillarity}, we obtain the following inequality:
\begin{align*}
	&\|\delta\rho\|_{H^3}^2+\|\delta u\|_{H^3}^2+\int_0^t \|\nabla\delta u\|_{H^3}^2+\|\pa_t\delta u\|_{H^2}^2\,ds\\
	&\le C\int_0^t(1+\|\pa_t u\|_{H^2}^2)\left(\|\delta\rho\|_{H^3}^2+\|\delta u\|_{H^3}^2\right)\,ds + C\kappa^2t.
\end{align*}
Then, we use Gr\"onwall's inequality and \eqref{est:limit-capillarity} to get
\[\sup_{0\le t\le T_0}\left(\|\delta\rho\|_{H^3}^2+\|\delta u\|_{H^3}^2\right)\le C\kappa^2T_0\exp\left(\int_0^{T_0}(1+\|\pa_tu\|_{H^2}^2)\,ds\right)\le C(T_0)\kappa^2,\]
which completes the proof of Theorem \ref{thm:vanishing_capillarity}.

\end{document}